\documentclass[12pt,a4paper,english]{smfart}
\usepackage[T2A,T1]{fontenc}
\usepackage[english]{babel}
\usepackage{ragged2e}
\usepackage{mathrsfs}
\usepackage[utf8]{inputenc}
\usepackage{amsmath}

\usepackage{smfthm, mathabx}
\usepackage{stmaryrd}
\usepackage{amssymb}
\usepackage{amsmath}

\usepackage{amsthm}
\usepackage{amsfonts}
\usepackage{graphicx}
\usepackage{enumerate}
\usepackage{changes}
\usepackage{hyperref}

\usepackage{tikz}
\usepackage{tikz-cd}

\usepackage{appendix}

\usepackage{euscript,mathrsfs}
\usepackage[utf8]{inputenc} 
\usepackage{longtable}
\usepackage{dsfont}

\usepackage[all,cmtip]{xy}
\usepackage{alltt}

\usepackage{calrsfs}

\makeatletter
\def\thm@space@setup{%
  \thm@preskip=\parskip \thm@postskip=0pt
}
\makeatother

\xyoption{all}

\usepackage{float}

\usepackage{fullpage}

\usepackage{color}

  \newcommand{\swidetilde}[1]{\smash{\widetilde{#1}}}

\newtheorem{Theorem}{Theorem}

\newtheorem{coco}{Corollary}

\author[Julian Feuerpfeil]{Julian Feuerpfeil}
\address{Dipartimento di Matematica e Applicazioni, Università di Milano-Bicocca, 20125 Milano, Italy}
\address{
 FEMTO-ST, Université Marie et Louis Pasteur, 25030 Besançon, France}
 \email{j.feuerpfeil@campus.unimib.it}

\author[Oussama Hamza]{Oussama Hamza}
\address{Institute for Advanced Studies in Mathematics, Harbin Institute of Technology,
 Harbin, China 150001}
\email{ohamza3@uwo.ca}

\author{Donghyeok Lim}
\address{Korea National University of Education, Department of Mathematics Education, 28173 Cheongju,South Korea}
\email{donghyeokklim@gmail.com}

\title{Tame Galois Groups, Linking Numbers and Mildness}

\subjclass{Primary: 11R32, 20E18 Secondary: 20F14, 20F05, 20F40, 05C25}
\keywords{Koch-type presentations, linking numbers, graded and filtered algebras, mild groups, Right Angled Artin Algebras}
\thanks{The authors are thankful to Christian Maire, J{\'a}n Min{\'a}{\v c} and Thomas Weigel for discussions and their support. They are especially grateful to Christian Maire for his support and strong interest in this paper. The first two authors are also grateful to Claudio Quadrelli, Simone Blumer and John Labute for many insightful discussions on the topic and references. The second author also thanks Elyes Boughattas, Keping Huang, Yuanyuan Jing, Ting Lu, Bakhrom Omirov, Jun Wang and Zhizheng Yu for discussions. The last two authors are grateful to Henrique Souza for helpful comments and to Heon Lee for his support. The third author was supported by the National Research Foundation of Korea (NRF) grants No. RS-2024-00462910.}

\newcommand{\Q}{\mathbb{Q}}
\newcommand{\F}{\mathbb{F}}
\newcommand{\Z}{\mathbb{Z}}
\newcommand{\NN}{\mathbb{N}}

\newcommand{\RB}{\CyB}

\newcommand{\Supp}{\mathrm{Supp}}

\newcommand{\Gov}{\mathrm{Gov}}

\def\p{{\mathfrak p}}
\def\q{{\mathfrak q}}
\def\t{{\mathfrak t}}

\def\grad{{\rm Grad}}

\def\Gal{{\rm Gal}}

\def\bX{{\mathbf{X}}}
\def\bE{{\mathbf{E}}}

\def\GG{{\mathcal G}}

\def\O{{\mathcal O}}

\def\PP{{\mathcal P}}
\def\J{{\mathcal J}}
\def\E{{\mathcal E}}
\def\Ll{{\mathcal L}}

\def\A{{\mathcal A}}
\def\AA{{\mathbb A}}

\def\I{{\mathcal I}}

\def\L{{\rm L}}

\def\p{{\mathfrak p}}

\DeclareFontFamily{T2A}{cmr}{}
\DeclareFontShape{T2A}{cmr}{m}{n}{<-> cmr10}{}

\DeclareSymbolFont{cyrletters}{T2A}{cmr}{m}{n}
\DeclareMathSymbol{\Be}{\mathalpha}{cyrletters}{102}
\font\rurm=wncyr10 scaled \magstep1
\def\sha{{{\textnormal{\rurm{Sh}}}}}
\def\CyB{{{\textnormal{\rurm{B}}}}}

\def\Sha{{\sha}}

\begin{document}

\begin{abstract}
Let~$p$ be an odd prime and let~$S$ be a set of tame primes. We denote by~$G_S$ the Galois group of the maximal pro-$p$ extension of $\Q$ unramified outside~$S$.

We prove that for every finite set of tame primes~$S_0$ with~$|S_0|\geq 2$, there exists a set~$S_1$ consisting of two tame primes such that~$G_{S_0\cup S_1}$ has cohomological dimension~$2$. This refines a result of Labute. More generally, we establish an analogous result for number fields not containing a primitive $p$-th root of unity, under a suitable splitting condition.  

Our approach answers a question of Labute, from his seminal paper on mild groups, and combines weighted Zassenhaus filtrations, graph-theoretic methods, and Koch-type presentations. As an application, we solve several cohomological Galois inverse problems with prescribed ramification and splitting. We also provide numerical examples and statistics.
\end{abstract}




\maketitle

\section*{Introduction}
\subsection*{Context}
Let~$p$ be an odd prime and~$K$ be a number field. A prime~$\q$ of~$K$ is said to be \emph{tame} if $N(\q)\equiv 1\pmod p$. For a set of finite tame primes $S$ and $T$ a set of primes disjoint from $S$, we denote by~$G_{K,S}$ (resp.\ $G_{K,S}^T$), the  Galois group of the maximal pro-$p$ extension of $K$ unramified outside~$S$ (and resp.\ totally split in $T$, with~$T$ a set of primes disjoint from~$S$). For~$K\coloneq \Q$, we use the notation~$G_S^T$.

Following the work of Golod and Shafarevich on class field towers, the structure of the groups~$G_{K,S}^T$ has become a central topic in Galois theory. Using their famous result, several infinite (indeed nonanalytic) groups have been constructed as~$G_{K,S}^T$, with various arithmetic applications (\cite{hajir2021cutting, hajir2020, hajir2025}). However, despite the abundance of such examples, relatively little is known about their structure. By class field theory, the groups~$G_{K,S}^T$ are FAB, i.e.\ every open subgroup has finite abelianization.
The celebrated tame Fontaine--Mazur conjecture (see \cite[Conjecture 5b]{fontaine1995geometric}) also predicts that every $p$-adic analytic quotient of~$G_{K,S}^T$ is finite. This highlights the importance of understanding such groups in the context of Galois representations.

Koch was the first to determine a complete presentation for the groups~$G_S$ (see~\cite{Koch65,Koch}), which opened many directions for further investigation. Building on these results and the concept of strongly free sequences and mild groups introduced by Anick~\cite{Anick1987, AN0}, Labute~\cite{Labute} was able to construct several groups~$G_{S}$ of cohomological dimension~$2$. He showed~\cite[Corollary~$6.3$]{Labute} that every finite set of tame primes~$S$ can be extended to a tame set~$\swidetilde{S}$ of size~$2\cdot |S|$ such that~$G_{\swidetilde{S}}$ is of cohomological dimension~$2$. For the case~$p=2$, we refer to~\cite{Labute-Minac, FOR}. Later, Schmidt~\cite{schmidt2008uber,Schmidt2006,Schmidt2007} interpreted these results in terms of the étale homotopy type of~$\mathrm{Spec}(\O_{K,S})$, and showed that, for the sets~$S$ considered by Labute, $\mathrm{Spec}(\O_{K,S})$ is a~$K(\pi,1)$ space for~$p$ in the context of arbitrary number fields. We refer to~\cite{genesislabute} for an historical survey. For other applications related to mild groups in arithmetic, we refer to~\cite{split, hamza2023zassenhaus, hamza2025masseyproductsunipotentextensions, MaireFAB}. Currently, it is still unknown whether there exists a group~$G_{K,S}^T$ of finite cohomological dimension greater than~$2$.

\subsection*{Statement of the results}
We assume throughout the paper that~$K$ does not contain~$\zeta_p$, a primitive root of unity. We denote by~${\rm cl}(K)$ the class group of~$K$ and~$d_K'\coloneq r_K+d_p{\rm cl}(K)$, where $r_K$ is the torsion-free rank of $\mathcal{O}_K^\times$ and $d_p{\rm cl}(K)$ denotes the $p$-rank of the class group.
We fix~$T_K$ a minimal set of primes generating the $p$-part of the class group ${\rm cl}(K)$. Note that~$T_K$ always exists using the Chebotarev Density theorem and~$|T_K|=d_p{\rm cl}(K)$. For instance, if~$p$ is large enough or~$K=\Q$, then~$T_K=\emptyset$. Let~$S_1$ and~$S_2$ be two sets of primes. We introduce
\begin{align*}
    V_{S_1}^{S_2}(K)\coloneq \big\{a\in K^{\times }\mid a\in K_\q^{\times p}\text{ for }\q\in S_1\text{ and }p\mid \nu_\mathfrak{t}(a)\text{ for }\mathfrak{t}\not\in S_2\big\}/K^{\times p},
\end{align*} 
with~$\nu_\mathfrak{t}$ the normalized valuation associated to~$\mathfrak{t}$. This allows us to define the Koch-module~$\RB_{S_1}^{S_2}(K)$ as the dual of~$V_{S_1}^{S_2}(K)$.

We say that~$S'$ is a \emph{Koch set} if it is tame, disjoint from~$T_K$, and satisfies $(a)$~$\RB_{S'}^{T_K}(K)=1$, and $(b)$~$G_{K,S'}^{T_K}=1$.
A finite tame set~$S$ \emph{admits a Koch set} if~$S$ contains a Koch set~$S'$ and is disjoint from~$T_K$. We show in Lemma~\ref{lem:Existence of Koch Tuples} that a set~$S$ can always be extended to contain a Koch set. Furthermore, Proposition~\ref{prop:Probabilty for existence of Koch set} implies that for $S$ large enough one can expect $S$ to already admit a Koch set with high probability.  If $S$ admits a Koch set, then an analog of the presentation of~$G_S$ given by Koch exists for~$G_{K,S}^{T_K}$. This was observed by Liu in~\cite[Thm.~$1.1$]{Liu24}. We call these presentations \lq\lq extended\rq\rq\, Koch-type presentations. For a precise definition, we refer to Section~\ref{ssec:Koch presentations and linking data}. A crucial piece of data of such a presentation is a linking type function denoted~$\mu_S^{T_K}$.
Note that when~$K=\Q$, the group~$G_S$ has a Koch-type presentation and~$\mu_S$ is the usual linking map as considered by Labute \cite{Labute} and many others (see for example \cite{Schmidt2006,Salle, MaireFAB}). 
Using an argument developed by Maire and Sankara in~\cite{MaireSankara}, we positively answer a question posed by Labute, see Theorem~\ref{thm:Realization of extended linking data}, and extend it in our context. It roughly states, that every function~$\mu$ with the right domain can be realized as $\mu_S^T$. Theorem~\ref{thm:Realization of extended linking data}, coupled with well-chosen weighted Zassenhaus filtrations, is fundamental in our work. We state our main result.

\begin{Theorem}\label{first theorem}
    For any finite set of tame primes~$S_0$ such that~$|S_0|\geq 2$, there exists a finite set of tame primes~$S_1$ such that~$|S_1|= 2+2d_K'$ and~$G_{K,S_0\cup S_1}^{T_K}$ has cohomological dimension~$2$. Furthermore, the deficiency of~$G_{K,S_0\cup S_1}^{T_K}$ is~$d_K'$.
\end{Theorem}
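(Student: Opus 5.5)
We will deduce Theorem~\ref{first theorem} from a mildness criterion applied to an extended Koch-type presentation. First we arrange for a Koch set: by Lemma~\ref{lem:Existence of Koch Tuples}, after adjoining suitable tame primes the set admits a Koch set. The key point is that these auxiliary primes, together with two further primes, can be chosen among the $2+2d_K'$ primes of $S_1$. Whenever $S\coloneq S_0\cup S_1$ admits a Koch set, Liu's theorem gives a presentation of $G\coloneq G_{K,S}^{T_K}$ with $|S|$ generators $x_\q$ and $|S|-d_K'$ relations. Here $d_K'$ measures $\dim H^1(G)-\dim H^2(G)$, and it equals $r_K+d_p{\rm cl}(K)$ because $K$ contains no $\zeta_p$. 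The deficiency statement then follows as soon as cohomological dimension $2$ is established, since mildness makes the presentation minimal with $\dim H^2=|S|-d_K'$. The relations have the shape
\[
\rho_\q = x_\q^{N(\q)-1}\prod_{\q'\neq \q}[x_\q,x_{\q'}]^{\mu_S^{T_K}(\q,\q')}\cdots .
\]
Since $p\mid N(\q)-1$, modulo the degree-$3$ term of a suitable weighted Zassenhaus filtration their initial forms are the quadratic Lie elements $\sum_{\q'}\mu(\q,\q')[X_\q,X_{\q'}]$, possibly with a power term that we kill by weighting.

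Next we choose weights. We assign weight $1$ to the generators indexed by the new primes in $S_1$ and weight $2$, or larger, to those indexed by $S_0$. The weights are tuned so that the leading form of each relation is a sum of commutators that involve at least one low-weight variable. We then aim for a combinatorial criterion of Labute type: if the initial forms are ``cyclic/bipartite'' commutator sums coming from a suitable graph, the relations form a strongly free sequence in the free Lie algebra. By Anick--Labute, and using weighted filtrations as in the paper's framework, $G$ is then mild, hence of cohomological dimension $2$. Concretely, we would design a target function $\mu$ on $S\times S$ (extended appropriately to the $T_K$ data) whose associated graph is a non-cycle-containing bipartite graph with a Right Angled Artin algebra structure. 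The $S_0$ vertices form one side and the $S_1$ vertices the other, with enough edges that every relation's leading term is a nonzero commutator sum. We also need $|S_0|\geq 2$ so that the relations indexed by $S_0$ are not all forced to vanish.

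Then we realize $\mu$. By Theorem~\ref{thm:Realization of extended linking data}, any prescribed linking function with the right domain is $\mu_S^{T_K}$ for some choice of new tame primes. Via Chebotarev, these are chosen one at a time with prescribed Frobenius in the governing fields of the previously chosen primes. At the same step we impose that $S$ admits a Koch set, i.e.\ $\RB_{S}^{T_K}(K)=1$ and $G_{K,S'}^{T_K}=1$ for a subset. The count $2d_K'$ of the extra primes is exactly what is needed to kill $V_S^{T_K}(K)$, whose dimension is governed by units and class group. This is the step where we must check that the Koch-set conditions and the linking prescriptions are compatible Chebotarev conditions in linearly disjoint extensions.

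The main obstacle is the strong freeness step. We must exhibit weights and a graph for which the leading forms are strongly free regardless of the uncontrolled linking numbers among the primes of $S_0$. We would try to make all $S_0$–$S_0$ commutators lie in higher filtration degree via the weights, so that only the prescribed $S_0$–$S_1$ and $S_1$–$S_1$ linkings matter. We would then verify strong freeness by the Hilbert series criterion, showing $1-\sum_i t^{w_i}+\sum_j t^{\deg \rho_j}$ equals the Hilbert series inverse, or by reduction to a known mild RAAG-type quadratic algebra with a tree/bipartite graph.
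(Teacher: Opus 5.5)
\textbf{There is a genuine gap.} Your outline has the right ingredients: a Koch set, the presentation of Theorem~\ref{theo-Liu}, realization via Theorem~\ref{extension}, a weighted filtration making $S_0$ heavy, and a bipartite $(\bX,e)$-RAAG. But the step you leave open as \emph{the main obstacle} (producing the linking data, weights and graph) is the entire content of the proof. Your framework is also set up in a way that would not let you carry it out.

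\textbf{Errors in the setup.}
\begin{itemize}
\item The presentation is misdescribed. If $S$ admits a Koch set $S'$, the generators are the inertia elements at $S\setminus S'$ only, so there are $|S|-d_K'$ of them. There are $|S|$ relations, one for each prime of $S$, including those of $S'$. So relations exceed generators by $d_K'$ (the paper's deficiency), the opposite of what you wrote.
\item For $x'\in S'$, $\tau_{x'}$ is a word in the generators. The coefficients $\mu(x',x)$ and $\mu'(x',x)$ for $x\in S_0$ are fixed and uncontrolled.
\item An $(\bX,e)$-RAAG has one edge per relation, so your graph has $|S_0|+2+2d_K'$ edges on $|S_0|+2+d_K'$ vertices. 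It therefore cannot be cycle-free.
\item The count $2d_K'$ is not \emph{what is needed to kill} $V_S^{T_K}$: the Koch set does that with $d_K'$ primes.
\item Your reason for needing $|S_0|\ge 2$ is not the real one.
\end{itemize}

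\textbf{The missing construction.} Adjoin generators $a_1,a_2,b_1,\dots,b_{d'}$. Put $a_2$ into every $\mathrm{Supp}(x_k')$, so $\tau_{x_k'}$ has weight-one leading part $\tau_{a_2}$. Prescribe
\begin{itemize}
\item $\widetilde\mu(x_k',b_k)=\widetilde\mu(b_k,a_1)\neq 0$;
\item $\widetilde\mu(x_i,a_1)\neq 0$ for $i<d$;
\item $\widetilde\mu(x_d,a_2)=\widetilde\mu(a_1,x_d)=\widetilde\mu(a_2,x_1)\neq 0$;
\item $0$ on all other new pairs.
\end{itemize}
Give weight $2$ to $S_0$ and weight $1$ to the new generators. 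Then all uncontrolled data (the $S_0$--$S_0$ linkings and the $S_0$-coefficients of the Koch-set relations) land in weight strictly above the initial forms, and each relation becomes a single commutator:
\begin{itemize}
\item $l_{x_k'}\leftrightarrow\{b_k,a_2\}$;
\item $l_{b_k}\leftrightarrow\{b_k,a_1\}$;
\item $l_{x_i}\leftrightarrow\{x_i,a_1\}$ for $i<d$;
\item $l_{x_1},l_{a_1},l_{x_d},l_{a_2}$ give the $4$-cycle $x_1$--$a_1$--$x_d$--$a_2$--$x_1$.
\end{itemize}
This graph is bipartite with parts $\{a_1,a_2\}$ and $S_0\cup\{b_1,\dots,b_{d'}\}$. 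When $d_K'>0$ this is not the split $S_0$ versus $S_1$ you propose. The hypothesis $|S_0|\ge 2$ is used precisely to have $x_1\neq x_d$, so that these edges are distinct. The count $|S_1|=d_K'+d_K'+2$ is the Koch set, the $b_k$, and $a_1,a_2$.

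\textbf{Power terms.} With these weights and $p=3$, a power $\tau_{a_j}^{N(\q)-1}$ with $v_p(N(\q)-1)=1$ would contribute in weight $3$, the same weight as $[\tau_{a_j},\tau_x]$. The paper excludes this by also prescribing $v_p(N(\q)-1)\ge 2$ for the new primes via Theorem~\ref{extension}. Your \emph{killing by weighting} does not provide this.
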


A notable feature of Theorem~\ref{first theorem} is that the size of~$S_1$ depends only on $K$, and is completely independent of the size of~$S_0$. The careful choices on weighted Zassenhaus filtrations used simultaneously with Theorem~\ref{thm:Realization of extended linking data}, and combined with arguments from Right Angled Algebras developed by the second author in~\cite{hamza2023extensions} (see also Remark~\ref{rem:sharper result first theorem}), are essential in our proof. Previously, Forré~\cite{FOR} used judicious choices of filtrations to infer results on mildness for the case~$p=2$. Gärtner~\cite[Remark~$2.12, (ii)$]{GARTNER2015788} also noted that mildness of a presentation may depend on the choice of the filtration. 

As a direct consequence, for the case~$K=\Q$, we refine~\cite[Corollary~$6.3$]{Labute}.
\begin{coco}\label{coco Labute}
    Let~$K=\Q$, and~$S_0$ be a finite set of tame primes such that~$|S_0| \geq 2$. Then, there exists a set~$S_1$ of two tame primes such that~$G_{S_0\cup S_1}$ has cohomological dimension~$2$.
\end{coco}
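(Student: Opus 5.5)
The corollary is the specialization of Theorem~\ref{first theorem} to $K=\Q$, so the plan is to check that every quantity in that theorem collapses correctly for $\Q$. First, $\Q$ does not contain $\zeta_p$ because $p$ is odd, so the standing hypothesis holds. Next we compute $d_\Q'$. By Dirichlet the unit group $\Z^\times=\{\pm1\}$ has torsion-free rank $r_\Q=0$. The class group of $\Q$ is trivial, so $d_p{\rm cl}(\Q)=0$. Hence $d_\Q'=0$, and also $T_\Q=\emptyset$, as the paper already notes.

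With $T_\Q=\emptyset$ we have $G_{\Q,S}^{T_\Q}=G_S$ for every tame $S$. This is the group studied by Koch and Labute, and it carries the classical Koch presentation with linking map $\mu_S$. Apply Theorem~\ref{first theorem} to the given $S_0$ with $|S_0|\ge 2$. It yields a finite tame set $S_1$ with $|S_1|=2+2d_\Q'=2$ such that $G_{S_0\cup S_1}$ has cohomological dimension $2$.

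Two small points need checking. The first is that the theorem's disjointness requirements from $T_K$ are vacuous here, since $T_\Q=\emptyset$. The second is that the primes of $S_1$ are genuinely new and not already in $S_0$, so that $S_1$ really consists of two additional primes. I expect this to hold because $S_1$ is produced through the realization result, Theorem~\ref{thm:Realization of extended linking data}, which chooses primes by Chebotarev conditions; there are infinitely many candidates, so finitely many primes can always be avoided.

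As a sanity check, the deficiency statement gives deficiency $d_\Q'=0$, so the generator and relation counts both equal $|S_0|+2$. This matches the Koch presentation of $G_S$ over $\Q$, which has $|S|$ generators and $|S|$ relations. There is no real obstacle: the only content is the vanishing $d_\Q'=0$, and all the difficulty lies in Theorem~\ref{first theorem} itself.
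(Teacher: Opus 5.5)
Your proposal is correct and matches the paper. The paper obtains this statement as Corollary~\ref{red Lab} by specializing Theorem~\ref{thm:Mildness for arbitrary fields} to $d_K'=0$, so that $T_K=\emptyset$, $G_{K,S}^{T_K}=G_S$ and $|S_1|=2+2d_K'=2$. Your second ``small point'' is automatic in the paper's construction: $\widetilde{S}\supset S_0$ is built in bijection with $\widetilde{\bX}$, so $S_1=\widetilde{S}\setminus S_0$ consists of two new primes by design.
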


A similar approach allows us to study some cohomological inverse problems in the context of restricted ramification with splitting. We refer to Section~\ref{sec:Genus Theory}. Namely, we realize some quadratic algebras as cohomology algebras of groups~$G_{K,S}^T$ for some well chosen~$S$ and~$T$. For this purpose, we use graph theory. It has been extensively used to study which pro-$p$ groups arise as maximal pro-$p$ Galois groups of fields. Snopce and Zalesskii~\cite{snopce2020right}, together with Cassella and Quadrelli~\cite{cassella2021right}, characterized which pro-$p$ RAAGs are maximal pro-$p$ Galois groups. They resolved several conjectures in this class.
For further work, see~\cite{blumer, hamza2025maximal2extensionspythagoreanfields, hamza2023extensions, leoni2024zassenhaus}.

\subsection*{Some numerical examples}
The results and proofs of Theorem~\ref{first theorem}, and Section~\ref{sec:Genus Theory} can be made computationally effective to produce concrete examples. We used OSCAR (see \cite{OSCAR}) for our computations. The source code and a Jupyter Notebook, containing all the examples, can be found on the first author's GitHub under \cite{Feuerpfeil2026Code}.

For example, by virtue of Theorem~\ref{first theorem} we show in Subsection~\ref{ssec:Examples mild groups} that the group~$G_{S}$, for~$p=3$ and the set~$S\coloneq \{7,13,19,37,10639,826093\}$, is mild with respect to a weighted Zassenhaus filtration and therefore of cohomological dimension $2$. 
We give further examples of this behavior also in the case of more general number fields in Section~\ref{ssec:Examples mild groups}.

Furthermore, Subsection~\ref{ssec:Examples for 1 RAAGs} produces examples of groups $G_S$ whose~$\F_p$-cohomology ring~$H^\bullet(G_S)$ comes from
graph theory. Let~$\A$ be the graded algebra over~$\F_3$ presented by four generators~$\{X_1,X_2,X_3,X_4\}$ and~$12$ relations: 
    $$\{X_i^2, \quad X_uX_v+X_vX_u,\quad X_1X_3, \quad X_2X_4, \quad 1\leq i \leq 4, \quad 1\leq u< v\leq 4\}.$$
Then for~$p=3$ and~$S\coloneq \{7,13,181,5563\}$ we have $H^\bullet(G_{S})\simeq \A$. Subsection~\ref{ssec:nonRAAG} gives more examples for other number fields and for certain quadratic algebras not coming from graph theory.

\subsection*{Organization of the paper}
The first section introduces all of the group-theoretical arguments required for our results. In particular, Subsection~\ref{sec:cohomology and graphs} introduces the notion of~$(\bX,e)$-RAAGs, which are not in general RAAGs, but nevertheless have the same $(\bX,e)$-filtration as RAAGs associated to certain graphs. The second section introduces all arithmetic arguments needed in this paper. In particular, it answers a question of Labute by proving Theorem~\ref{thm:Realization of extended linking data}, using an argument from~\cite{MaireSankara}. The third section is devoted to the proof of Theorem~\ref{first theorem}. The main group theoretical argument relies on suitable choices of~$(\bX,e)$-filtrations, which yield $(\bX,e)$-RAAGs whose underlying graphs are triangle-free. The fourth section applies graph theory to the cohomological inverse problem. The last section focuses on numerical examples.

\section{Group theoretic and algebraic preliminaries}
\label{sec:Groups And Algebras}

In this section we introduce general group-theoretic tools and techniques that we need to prove most of our results. For general references, we refer to~\cite{LAZ, split, hamza2023extensions, hamza2023zassenhaus, hamza2026filtrationscohomologygraphproducts}. 

\subsection{Filtered and graded (Lie)-algebras}

Set an integer~$d$ and a finite set~$\bX\coloneq \{x_1,\dots, x_d\}$. We denote by~$\F_p\langle\!\langle X_1,..,X_d\rangle\!\rangle $ the algebra of noncommutative series in the variables~$X_1,\dots, X_d$ over~$\F_p$.
A basis of the topological~$\F_p$-vector space~$\F_p \langle\!\langle X_1,\dots, X_d\rangle\!\rangle$ is given by monomials~$X^{\alpha}\coloneq \smash{X_{i_{\alpha_1}}^{\alpha_1}\dots X_{i_{\alpha_n}}^{\alpha_n} }$, where~$\alpha$ is an~$n$-tuple and~$i_{\alpha_k}$ is an element in~$\{1,\dots,d\}$.
Consequently, every element~$x$ in~$\F_p \langle\!\langle X_1,\dots, X_d\rangle\!\rangle$ can be written as an infinite sum~$x\coloneq \sum_{\alpha}{a_\alpha X^\alpha}$, with~$a_{\alpha}\in \F_p$.

For a given~$e\coloneq (e_1,\dots, e_d)\in \NN^d$ we denote by~$E_e(\bX)$ the set~$\F_p\langle\!\langle X_1,..,X_d\rangle\!\rangle$ endowed with the filtration~$\{E_{e,n}(\bX)\}_{n\in \NN}$ induced by the following degree function:
\begin{align*}
\omega_e \Big({\sum}_{\alpha} a_\alpha \cdot X^\alpha \Big) \coloneq \min_{\alpha} \{ \alpha_1 e_{i_{\alpha_1}}+\dots +\alpha_n e_{i_{\alpha_n}} \}.
\end{align*}
We define~$\E_e(\bX)\coloneq \bigoplus_{n\in \NN} \E_{e,n}(\bX)$, with $\E_{e,n}(\bX)=E_{e,n}(\bX)/E_{e,n+1}(\bX)$. Note that~$\E_e(\bX)$ is isomorphic to the (free) graded algebra~$\F_p \langle X_1,\dots ,X_d\rangle$, where every~$X_i$ is of degree $e_i$.
If~$x$ is a nonzero element in~$E_e(\bX)$, we define its initial form~$\overline{x}$ as the image of~$x$ in~$$\E_{e,\omega_e(x)}(\bX)=E_{e,\omega_e(x)}(\bX)/E_{e, \omega_e(x)+1}(\bX).$$
We fix an ordering on the set $\{X_i\}_{i=1}^d$. We say that~$X^{\alpha}<X^{\beta} $, if $\omega_e(X^{\alpha})>\omega_e(X^\beta)$ and if we have equality, we use the lexicographic order. 
Define~$\widehat{x}\coloneq \max_{a_\alpha \neq 0}\{X^{\alpha}\}$. 

Let $I$ be a closed two-sided ideal of $E_e(\bX)$. We endow~$I$ with the filtration~$\{I_n\coloneq I\cap E_{e,n}(\bX)\}_{n\in \NN}$. The algebra~$A\coloneq E_e(\bX)/I$ is endowed with a filtration that we call the quotient filtration (of~$E_e(\bX)$ by~$I$, see~\cite[Chapitre~I,~$2.1.7$]{LAZ}). We denote it by~$\{A_n\}_{n\in \NN}$. We define~$\grad(A)\coloneq \bigoplus_{n\in \NN} \grad_n(A)$, where~$\grad_n(A)\coloneq A_n/A_{n+1}$. We also introduce the Hilbert series of~$A$ (resp.\ of a graded algebra~$\A\coloneq \E_e(\bX)/\mathcal{J}=\bigoplus_{n\in \NN}\A_n$, for some graded ideal~$\J$) by: 
 $$A(t)\coloneq \sum_{n\in \NN}\dim_{\F_p}(\grad_n(A))t^n, \quad \text{and} \quad \A(t)\coloneq \sum_{n\in \NN}\dim_{\F_p}(\A_n)t^n.$$

\subsection{Magnus's isomorphism and applications in group theory}

We assume that~$G$ is a finitely presented pro-$p$ group presented by~$d$ generators~$\{\tau_x|\quad x\in \bX\coloneq \{x_1,\dots, x_d\}\}$ and~$r$ relations~$\{l_1,\dots, l_r\}$. We denote this presentation~$\PP$. Let~$F(\bX)$ be the free pro-$p$ group on~$\{\tau_x, x\in \bX\}$ and~$R$ be its closed normal subgroup on~$\{l_1,\dots, l_r\}$. We have an exact sequence:
$$1\to R \to F(\bX) \to G \to 1.$$

Magnus (see for instance~\cite[Appendice~$A.3$]{LAZ}) constructed an injective group homomorphism~$\varPsi_{F(\bX)}:F(\bX)\to \F_p\langle\!\langle X_1,...,X_d\rangle\!\rangle^{\times}$ induced by $\tau_{x_i}\mapsto 1+X_i$. This induces a filtration on~$F(\bX)$, which we call~$(\bX, e)$-filtration, defined by:$$F_{e,n}(\bX)\coloneq \{f\in F(\bX), \quad \varPsi_{F(\bX)}(f)-1\in E_{e,n}(\bX)\}.$$
We also define~$\Ll_e(\bX)$ the free graded $p$-restricted Lie-algebra on~$\{X_1,\dots, X_d\}$ over $\F_p$, where every~$X_i$ is endowed with weight~$e_i$.
Magnus isomorphism~\cite[Chapitre~II, Théorème~$3.2.5$]{LAZ} also allows us to construct an isomorphism~$$\grad(\varPsi_{F(\bX)})\colon \bigoplus_{n\in \NN} F_{e,n}(\bX)/F_{e,n+1}(\bX) \to \Ll_e(\bX).$$

We write~$I_e(R)$ for the closed two-sided ideal of~$E_e(\bX)$ generated by~$\{ \varPsi_{F(\bX)}(l)-1\}_{l\in R}$. We set ~$E_e(\bX,G)\coloneq E_e(\bX)/I_e(R)$, equipped with the quotient filtration~$\{E_{e,n}(\bX,G)\}_{n\in \NN}$. The map $\varPsi_{F(\bX)}$ induces $\varPsi_{\bX,G}\colon G \to E_e(\bX,G)^{\times}$ mapping $\tau_{x_i}$ to the class of~$1+X_i$. We denote by~$\I_e(R)$ the ideal of~$\E_e(\bX)$ isomorphic to~$\bigoplus_{n\in \NN}I_{e,n}(R)/I_{e,n+1}(R)$. This allows us to define:
$$E_e(\bX,G)\coloneq E_e(\bX)/I_e(R), \quad \text{and} \quad \E_e(\bX,G)\coloneq \E_e(\bX)/\I_e(R) \coloneq \bigoplus_{n\in \NN} \E_{e,n}(\bX, G).$$
 We define, for every integer~$n$, the~$(\bX, e)$-filtration of~$G$ by
$$G_{e,n}(\bX)\coloneq \{g\in G, \varPsi_{\bX,G}(g)-1\in E_{e,n}(\bX,G)\}.$$
If~$e=(1,\dots, 1)$, then~$\{G_{e,n}(\bX)\}_{n\in \NN}$ is independent of~$\bX$ (see~\cite[Chapitre~II, section~$3.2$]{LAZ}). In that case, we omit ~$e$ and~$\bX$, and write~$E_n$, $G_n$, and ~$F_n$. This filtration coincides with the Zassenhaus filtration (see~\cite[Theorem~$12.9$]{DDMS}) defined inductively~by
$$G_1\coloneq G, \quad \text{and} \quad G_{n}\coloneq G_{\lceil \frac{n}{p} \rceil}^p \prod_{i+j=n} \lbrack G_i, G_j\rbrack.$$

We endow~$R$ with the induced filtration given by $R_{e,n}\coloneq F_{e,n}(\bX)\cap R$. Let us observe that the filtration~$\{G_{e,n}(\bX)\}_{n\in \NN}$ coincides with the quotient filtration 
$\{F_{e,n}(\bX)\}_n$ by~$\{R_{e,n}\}_n$. We define~$\J_e(R)$ the Lie ideal of~$\Ll_e(\bX)$ given by~$\grad(\varPsi_{F(\bX)})\left(\bigoplus_{n\in \NN} R_{e,n}/R_{e,n+1}\right)$, and we define:
$$\Ll_e(\bX, G)\coloneq \Ll_e(\bX)/\J_e(R)\coloneq \bigoplus_{n\in \NN}\Ll_{e,n}(\bX, G).$$
Note that~$\Ll_{e,n}(\bX, G)\simeq G_{e,n}(\bX)/G_{e,n+1}(\bX)$.


For~$1\leq i \leq d$, we define~$n_{e,i}\coloneq \omega_e(\psi_{F(\bX)}(l_i)-1)$, and~$\rho_{e,i}$ the image of~$\psi_{F(\bX)}(l_i)-1$ in~$\E_{e,n_{e,i}}(\bX)$. We denote by~$\I_e(\PP)$ the two sided ideal generated by~$\{\rho_{e,i}\}_{i=1}^r$, and:
$$\E_e(\bX,\PP)\coloneq \E_e(\bX)/\I_e(\PP), \quad \text{and} \quad \E_e(\bX, \PP,t)\coloneq \E_e(\bX, \PP)(t).$$
Note that~$\rho_{e,i}$ can also be seen as an element of~$\Ll_e(\bX)$ through the injection (given by universal enveloping algebra theory) $\Ll_e(\bX)\hookrightarrow \E_e(\bX)$. Thus, we define~$\J_e(\PP)$ the restricted Lie-two sided ideal of~$\Ll_e(\bX)$ generated by~$\rho_{e,i}$, and~$\Ll_e(\bX,\PP)\coloneq \Ll_e(\bX)/\J_e(\PP)$.
\begin{lemm}\label{beg rel}
    We have two epimorphisms:
    $$\psi_{\PP}\colon \E_e(\bX, \PP)\twoheadrightarrow \E_e(\bX,G), \quad \text{and} \quad \varphi_\PP\colon \Ll_e(\bX, \PP)\twoheadrightarrow \Ll_e(\bX,G); \quad \text{such that } X_i\mapsto X_i.$$
\end{lemm}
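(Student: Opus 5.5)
Both maps are the identity on generators of the free graded objects, so the plan is to show that the kernel of the source quotient is contained in the kernel of the target quotient. Concretely, $\E_e(\bX,\PP)=\E_e(\bX)/\I_e(\PP)$ and $\E_e(\bX,G)=\E_e(\bX)/\I_e(R)$. Likewise $\Ll_e(\bX,\PP)=\Ll_e(\bX)/\J_e(\PP)$ and $\Ll_e(\bX,G)=\Ll_e(\bX)/\J_e(R)$. So it suffices to prove the inclusions
\[
\I_e(\PP)\subseteq \I_e(R),\qquad \J_e(\PP)\subseteq \J_e(R).
\]
The identity of $\E_e(\bX)$ (resp.\ $\Ll_e(\bX)$) then factors through surjections $\psi_\PP$ and $\varphi_\PP$ sending $X_i\mapsto X_i$. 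Surjectivity is automatic, since the targets are quotients of the same free object.

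For the associative case, I first check that $\I_e(R)$, defined as the associated graded $\bigoplus_n I_{e,n}(R)/I_{e,n+1}(R)$ of the filtered closed ideal $I_e(R)$, is a two-sided graded ideal of $\E_e(\bX)$. This holds because the filtration is multiplicative, i.e.\ $E_{e,n}I_{e,m}\subseteq I_{e,n+m}$, so it is enough to show each generator $\rho_{e,i}$ lies in $\I_e(R)$. Now $\Psi_{F(\bX)}(l_i)-1\in I_e(R)$ and has valuation $n_{e,i}$, so it lies in $I_{e,n_{e,i}}(R)$. Its class modulo $I_{e,n_{e,i}+1}(R)$ maps to $\rho_{e,i}$ under the identification of $\I_e(R)$ with a subspace of $\E_e(\bX)$, which is given by $I_{e,n}/I_{e,n+1}\hookrightarrow E_{e,n}/E_{e,n+1}$ (injective because $I_{e,n+1}=I\cap E_{e,n+1}$). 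Hence $\rho_{e,i}\in\I_e(R)$, and therefore $\I_e(\PP)\subseteq\I_e(R)$.

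The Lie case runs in parallel. Since $l_i\in R$ and $\omega_e(\Psi(l_i)-1)=n_{e,i}$, we have $l_i\in R_{e,n_{e,i}}=F_{e,n_{e,i}}\cap R$ and $l_i\notin F_{e,n_{e,i}+1}$. Under $\grad(\Psi_{F(\bX)})$, its class in $F_{e,n_{e,i}}/F_{e,n_{e,i}+1}$ maps to the initial form of $\Psi(l_i)-1$, which is $\rho_{e,i}$. So $\rho_{e,i}\in\J_e(R)$.

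The remaining point, which I expect to be the main obstacle, is that $\J_e(R)$ is a restricted Lie ideal. This follows from $R$ being a closed normal subgroup, together with the compatibilities of the filtration:
\[
[F_{e,n},R_{e,m}]\subseteq R_{e,n+m},\qquad R_{e,m}^p\subseteq R_{e,pm}.
\]
Under $\grad(\Psi)$, commutators and $p$-th powers go to Lie brackets and $p$-maps (Lazard's theory of $p$-valuations, \cite[Chapitre~II]{LAZ}). Given this, the restricted ideal generated by the $\rho_{e,i}$ is contained in $\J_e(R)$, which gives $\varphi_\PP$.
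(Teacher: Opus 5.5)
Your proposal is correct and follows the same route as the paper: you show that each initial form $\rho_{e,i}$ lies in $\I_e(R)$ and in $\J_e(R)$, so $\I_e(\PP)\subseteq\I_e(R)$ and $\J_e(\PP)\subseteq\J_e(R)$, which gives the surjections sending $X_i\mapsto X_i$. Beyond the paper's one-line argument, you also check that $\I_e(R)$ is a graded two-sided ideal and that $\J_e(R)$ is a restricted Lie ideal (using normality of $R$, $[F_{e,n},R_{e,m}]\subseteq R_{e,n+m}$ and $R_{e,m}^p\subseteq R_{e,pm}$); the paper leaves these facts implicit.
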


\begin{proof}
    We observe, for every~$1\leq i \leq d$, that~$\rho_{e,i}$ is an element in~$\I_e(R)$ and~$\J_e(R)$. Thus we have an inclusion of~$\I_e(\PP)$ (resp.\ $\J_e(\PP)$) in~$\I_e(R)$ (resp.\ $\J_e(\PP)$), and so the desired surjection.
\end{proof}



\subsection{Mild presentations}
We continue to use the notation introduced in the previous section. In particular, $G$ denotes a finitely presented pro-$p$ group and $\PP$ a presentation of $G$.
Little is known about the structure of $G$ when~$\psi_\PP$ and~$\varphi_\PP$ fail to be isomorphisms. We therefore focus on situations in which they are isomorphisms. For this purpose, we use the following definition of mild groups (cf.~\cite[Lemma~1.3]{FOR}). 


\begin{defi}
    We say that the group~$G$ has a mild presentation~$\PP$ with respect to~$(\bX,e)$, if:
    $$\E_e(\bX, \PP,t)=\frac{1}{1-\sum_{i=1}^d t^{e_i}+\sum_{j=1}^r t^{n_{e,j}}}.$$
\end{defi}


\begin{theo}
Assume that~$G$ has a mild presentation~$\PP$ with respect to some~$(\bX,e)$ filtration. Then~$\psi_{\PP}$ and~$\varphi_\PP$ are isomorphisms. Furthermore, the group~$G$ has cohomological dimension~$2$.
\end{theo}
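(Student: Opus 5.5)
The plan follows Anick's theory of strongly free sequences, adapted to the weighted setting as in Labute and Forré. The first step is to recall Anick's criterion. For a graded algebra $\E_e(\bX)/\I$ with $\I$ generated by homogeneous elements $\rho_1,\dots,\rho_r$ of degrees $n_1,\dots,n_r$, we always have the coefficientwise inequality
$$\E_e(\bX,\PP,t)\geq \Bigl(1-\sum_i t^{e_i}+\sum_j t^{n_{e,j}}\Bigr)^{-1}.$$
Equality holds if and only if the sequence $(\rho_{e,j})$ is strongly free. In that case the two-sided ideal $\I_e(\PP)$, viewed as a right $\E_e(\bX,\PP)$-module, has the form $\I/\I\cdot\E_e(\bX)_+$. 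This module is free on the images of the $\rho_{e,j}$. Consequently the minimal graded resolution of the trivial module $\F_p$ over $\E_e(\bX,\PP)$ has length $2$:
$$0\to \bigoplus_j \E_e(\bX,\PP)(-n_{e,j})\to \bigoplus_i \E_e(\bX,\PP)(-e_i)\to \E_e(\bX,\PP)\to \F_p\to 0.$$
Mildness, as defined above, is exactly this equality, so we may assume this resolution.

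The second step is to show that $\psi_\PP$ is an isomorphism. Lemma~\ref{beg rel} already gives surjectivity, so it remains to compare Hilbert series. The strategy is to lift the graded resolution to the complete filtered algebra $E_e(\bX,G)=\F_p[[G]]$, using the filtered-module techniques of Lazard: a filtered complex whose associated graded complex is exact is itself exact. Concretely, we form the filtered complex
$$0\to \F_p[[G]]^r\to \F_p[[G]]^d\to \F_p[[G]]\to\F_p\to 0,$$
whose maps are given by Fox derivatives of the relations $l_j$ and by $\tau_{x_i}-1$, with shifted filtrations. Its associated graded complex maps onto the graded resolution through $\grad$ and $\psi_\PP$. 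The key technical point, and the expected main obstacle, is to show that the associated graded of this filtered complex is exact and coincides with the graded resolution. I would follow Labute's argument (Theorem~1.2 of his mild groups paper, and Forré for the weighted case): one compares filtrations on $R/[R,R]R^p$ and uses freeness to show that $\gr(I_e(R))$ equals $\I_e(\PP)$. This gives $\I_e(R)=\I_e(\PP)$, hence $\psi_\PP$ is an isomorphism, and the filtered complex is a free resolution of $\F_p$ over $\F_p[[G]]$ of length $2$.

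From this resolution, $H^n(G,\F_p)=0$ for $n\geq 3$, so $\mathrm{cd}(G)\leq 2$. Moreover $r\geq 1$ or $G$ is free; in the relevant case $H^2\neq 0$, giving cohomological dimension exactly $2$ whenever $r\geq1$. Finally, $\varphi_\PP$ is an isomorphism because $\Ll_e(\bX,\PP)$ is the restricted Lie algebra whose restricted enveloping algebra is $\E_e(\bX,\PP)$, and likewise for $G$, by Lazard's identification of $\E_e(\bX,G)$ with the restricted enveloping algebra of $\Ll_e(\bX,G)$. An isomorphism of enveloping algebras that is compatible with the maps $\varphi_\PP$ and $\psi_\PP$ forces $\varphi_\PP$ to be injective, by Poincaré–Birkhoff–Witt, since the enveloping-algebra functor is faithful on the generating sets. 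Together with surjectivity this shows that $\varphi_\PP$ is an isomorphism.
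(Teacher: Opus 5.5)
Your outline follows the paper's route. The paper proves this statement only by citing Labute's Theorem~1.2, and you reconstruct the surrounding architecture (Anick's criterion, the length-two graded resolution, its lift to $\F_p[[G]]$, the PBW argument for $\varphi_\PP$) while likewise deferring the one substantive step, $\I_e(R)=\I_e(\PP)$, to Labute and Forr\'e. Two harmless slips: the coefficientwise inequality in your first step holds in general only in Anick's truncated form $|\cdot|$ (it already fails for $\F_p[X]/(X^2)$ at $t^6$), and the natural comparison map goes from the graded resolution onto the associated graded complex via $\psi_\PP$, not the reverse.
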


\begin{proof}
See Labute~\cite[Theorem~$1.2$]{Labute}.
\end{proof}

Let us recall that the notion of combinatorially free families provides an effective criterion to check mildness.

 \begin{defi}
We consider a family $\{X_{u_i}X_{v_i}\}_{i=1}^r $ of quadratic monomials. This family is said to be combinatorially free if for every~$1\leq i,j \leq r$ we have~$X_{v_i}\neq X_{u_j}$.
\end{defi}


\begin{coro}\label{hilbseriescomb}
Let~$G$ be a pro-$p$ group on~$\{\tau_x, x\in\bX\}$ generators and presented by~$\PP$. Consider an~$(\bX,e)$-filtration on~$F(\bX)$. If the family $\widehat{\rho_e}\coloneq \{\widehat{\rho_{e,i}}\}_{i=1}^r$ is quadratic and combinatorially free, then the presentation~$\PP$ is mild for the~$(\bX,e)$-filtration.
\end{coro}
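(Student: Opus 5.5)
Using the monomial order introduced above, in which larger weight gives a smaller monomial and ties are broken lexicographically, I will prove the Hilbert series identity
$$\E_e(\bX,\PP,t)=\frac{1}{1-\sum_{i=1}^d t^{e_i}+\sum_{j=1}^r t^{n_{e,j}}}$$
through a Gröbner basis (Anick) argument. Mildness then follows from the definition, and the preceding theorem gives cohomological dimension~$2$.

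\textbf{Step 1: reduce to the monomial algebra.} Since $\I_e(\PP)$ is a homogeneous ideal of $\E_e(\bX)\simeq\F_p\langle X_1,\dots,X_d\rangle$, its leading-monomial ideal $\widehat{\I}$ satisfies $\E_e(\bX)/\I_e(\PP)\,(t)=\E_e(\bX)/\widehat{\I}\,(t)$. This holds because the standard monomials (those not divisible by any leading monomial) form a basis in each degree. Moreover $\widehat{\I}$ contains the ideal $\J$ generated by $\widehat{\rho_e}=\{\widehat{\rho_{e,i}}\}$. Coefficientwise this gives
$$\E_e(\bX,\PP,t)\le \E_e(\bX)/\J\,(t).$$

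\textbf{Step 2: compute the monomial quotient.} For a combinatorially free family of quadratic monomials $X_{u_i}X_{v_i}$, no two leading words overlap. Indeed, an overlap would need the last letter $X_{v_i}$ of one word to equal the first letter $X_{u_j}$ of another, and this is excluded; self-overlaps are excluded as well, by the case $i=j$. Consequently the Anick resolution of $\E_e(\bX)/\J$ has length~$2$: the chains are only the letters and the words $\widehat{\rho_{e,i}}$. Hence
$$\E_e(\bX)/\J\,(t)=\Bigl(1-\sum_i t^{e_i}+\sum_j t^{\deg \widehat{\rho_{e,j}}}\Bigr)^{-1}.$$
The degree $\deg\widehat{\rho_{e,j}}$ equals $e_{u_j}+e_{v_j}=n_{e,j}$, since $\widehat{\rho_{e,j}}$ is a monomial of the homogeneous element $\rho_{e,j}$. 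Equivalently, by inclusion–exclusion on occurrences, a word avoids all the $\widehat{\rho_{e,j}}$ exactly as counted by the series. I should also check that the $\widehat{\rho_{e,j}}$ are pairwise distinct; if two coincided, the series would not match the count of $r$ relations. Distinctness is implicit in reading the family as $r$ monomials, and if needed I will assume it or reduce the relations.

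\textbf{Step 3: the reverse inequality.} The $\rho_{e,i}$ form a Gröbner basis of $\I_e(\PP)$. There are no overlap ambiguities among their leading words, and inclusion ambiguities cannot occur since all leading words have length~$2$ and are distinct. By Bergman's diamond lemma, all S-polynomials therefore reduce to zero. Hence $\widehat{\I}=\J$, and equality holds in Step~1.

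The main obstacle is the Gröbner basis claim in Step~3. One must justify that the diamond lemma applies in the completed, weighted setting: the ideal is homogeneous in the graded free algebra $\E_e(\bX)$, so it suffices to work degree by degree, where everything is finite-dimensional. One must also check that reduction is compatible with the chosen order, in which monomials are compared within a fixed weight lexicographically. Alternatively, Step~3 can be replaced by Anick's theorem that a sequence whose leading terms are strongly free is itself strongly free, and combinatorially free families of monomials are strongly free. That route yields the same Hilbert series directly.
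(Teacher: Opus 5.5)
Your proof is correct. It is essentially the standard leading-term argument (Anick's Gr\"obner-type reduction, plus the fact that non-overlapping quadratic obstructions admit no chains beyond the obstructions themselves) that underlies the result the paper simply cites, \cite[Theorem~2.6]{FOR}. Your distinctness caveat is a genuine hidden hypothesis, so it must be assumed, not ``reduced away'': two relations both with initial form $[X_1,X_2]$ (weights $e=(1,1)$) satisfy the paper's literal definition of combinatorial freeness, yet give $\E_e(\bX,\PP,t)=(1-t)^{-2}\neq(1-2t+2t^2)^{-1}$, so the family must be read as a set of pairwise distinct monomials.
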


\begin{proof}
See~\cite[Theorem~$2.6$]{FOR}.
\end{proof}

The next subsection provides concrete examples.

\subsection{Right Angled Artin Algebras and group applications}\label{sec:cohomology and graphs}

Let~$\Gamma\coloneq (\bX, \bE)$ be an (undirected) graph, and~$e$ some weights. For now, we fix a group~$G$, presented by~$\PP$ with set of generators~$\bX$ and relations~$\{l_{x_u,x_v}\}_{\{x_u,x_v\}\in \bE}$. 

\begin{defi}[$(\bX,e)$-RAAG]
    We say that a group~$G$ is~$(\bX,e)$-RAAG for the graph~$\Gamma$, if the relations~$\{l_{x_u,x_v}\}_{\{x_u,x_v\}\in \bE}$ satisfy:
$$l_{x_u,x_v}\equiv \lbrack \tau_{x_u},\tau_{x_v}]^{\mu(x_u,x_v)}\pmod{F_{e,e_u+e_v+1}(\bX)}$$ for some $\mu(x_u,x_v)\in \F_p^\times$. When we consider the Zassenhaus filtration, we directly say~$1$-RAAG.
\end{defi}
\begin{rema}
    By replacing $l_{x_u,x_v}$ by a suitable power, one can assume that $\mu(x_u,x_v)=1$, for every~$\{x_u,x_v\}\in \bE$.
\end{rema}


If~$G$ is~$(\bX,e)$-RAAG for~$\Gamma$, then the initial form of the relations can now be determined as~$\rho_{e, x_u,x_v}=\mu(x_u,x_v)\lbrack X_u, X_v\rbrack=\mu(x_u,x_v)(X_uX_v-X_vX_u).$
We define
$$ \E_e(\bX, \Gamma) \coloneq \E_e(\bX)/\I_e(\Gamma), \quad \Ll_e(\bX, \Gamma) \coloneq \Ll_e(\bX)/\J_e(\Gamma)\quad \text{and} \quad \E_e(\bX, \Gamma,t) \coloneq \E_e(\bX, \Gamma)(t),$$
where~$\I_e(\Gamma)$ (resp.\ $\J_e(\Gamma)$) is the ideal of~$\E_e(\bX)$ (resp.\ $\Ll_e(\bX)$) generated by~$\lbrack X_u, X_v\rbrack$ for~$\{x_u,x_v\}\in \bE$. 
If $\PP$ is the presentation of $G$ associated with $\bX$ and $\{l_{x_u,x_v}\}_{\{x_u,x_v\}\in \bE}$, then it is easy to observe that $\E_e(\bX,\Gamma)\simeq \E_e(\bX, \PP)$ and $\Ll_e(\bX, \Gamma)\simeq \Ll_e(\bX, \PP)$. 

We say that a graph~$\Gamma\coloneq (\bX,\bE)$ is \emph{bipartite} if there is a nontrivial partition~$\bX\coloneq U \amalg V$, such that there are no edges between two vertices of~$U$, and no edges between two vertices of~$V$.
\begin{theo}\label{mildGammaRAAG}
Assume that~$\Gamma\coloneq (\bX, \bE)$ is bipartite, $e$ and $G$ as above. Then the presentation~$\PP$ is mild for the~$(\bX, e)$-filtration. So~$G$ has cohomological dimension~$2$ and we have isomorphisms:
$$\varphi_{\PP}\colon \Ll_e(\bX, \Gamma) \simeq \Ll_e(\bX, G), \quad \text{and} \quad \psi_\PP\colon \E_e(\bX, \Gamma)\simeq \E_e(\bX, G).$$ 
\end{theo}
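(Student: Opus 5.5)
We plan to deduce the statement from Corollary~\ref{hilbseriescomb}: we show that, for a suitable ordering of the variables, the leading monomials of the initial forms of the relations are quadratic and form a combinatorially free family. Mildness is then immediate, and cohomological dimension~$2$ together with the two isomorphisms follows from the theorem of Labute recalled above. The last step uses the identifications $\E_e(\bX,\Gamma)\simeq \E_e(\bX,\PP)$ and $\Ll_e(\bX,\Gamma)\simeq \Ll_e(\bX,\PP)$ noted just before the statement.

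First we fix the bipartition $\bX=U\amalg V$. We choose the ordering on $\{X_i\}$ so that every $X_u$ with $x_u\in U$ is larger than every $X_v$ with $x_v\in V$. Since $G$ is $(\bX,e)$-RAAG for $\Gamma$, each relation indexed by an edge $\{x_u,x_v\}$ (with $x_u\in U$, $x_v\in V$) has initial form
$$\rho_{e,x_u,x_v}=\mu(x_u,x_v)(X_uX_v-X_vX_u).$$
This form lies in degree $e_u+e_v$. It is a nonzero element, since $X_uX_v$ and $X_vX_u$ are distinct monomials in the free algebra. So $n_{e,x_u,x_v}=e_u+e_v$.

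Next we identify the leading monomial. Both monomials have the same weight $e_u+e_v$, so the order compares them lexicographically. As $X_u$ is the larger letter, the maximum is $X_uX_v$, and hence $\widehat{\rho_{e,x_u,x_v}}=X_uX_v$. This is a quadratic monomial in the sense of Corollary~\ref{hilbseriescomb}: it is a product of two variables, even when the weights differ.

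For combinatorial freeness we must check that the second letter of one leading monomial never equals the first letter of another. Every second letter lies in $V$ and every first letter lies in $U$, and $U\cap V=\emptyset$, so the condition holds. Corollary~\ref{hilbseriescomb} therefore gives mildness for the $(\bX,e)$-filtration, and Labute's theorem gives the remaining conclusions. Vertices of $\Gamma$ with no edges cause no problem: they are simply free generators, and the Hilbert series formula still applies.

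The only delicate point is making sure the conventions match. The ordering on monomials must compare by $\omega_e$ first and lexicographically second, and we must confirm that Forré's criterion is stated for weighted quadratic monomials rather than only for the Zassenhaus weights $e=(1,\dots,1)$. If it required uniform weights, we would instead compare Hilbert series directly. For a combinatorially free family of leading monomials, the associated monomial algebra has Hilbert series $\bigl(1-\sum_i t^{e_i}+\sum_{\{x_u,x_v\}\in\bE} t^{e_u+e_v}\bigr)^{-1}$, and a Gröbner basis argument transfers this series to $\E_e(\bX,\Gamma)$.
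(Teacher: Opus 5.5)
Your proposal is correct and is essentially the paper's argument. The paper's proof simply cites \cite[Remark~1.2]{hamza2023extensions} together with Corollary~\ref{hilbseriescomb}; you spell this out by ordering the letters of $U$ above those of $V$, so that the leading monomials $X_uX_v$ are combinatorially free, and then applying Labute's theorem. Your concern about weights is unnecessary, since Corollary~\ref{hilbseriescomb} is already stated for an arbitrary $(\bX,e)$-filtration, so the Gröbner-basis fallback is not needed.
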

\begin{proof}
We use~\cite[Remark~$1.2$]{hamza2023extensions} and Corollary~\ref{hilbseriescomb}.
\end{proof}


\begin{exem}\label{exem: def RAAG}
    Let~$\Gamma$ be a graph, with associated Right-Angled-Artin pro-$p$ group (short RAAG) $G_\Gamma$. This is the pro-$p$ group presented by generators~$\{\tau_{x_1},\dots, \tau_{x_d}|\quad x_i\in \bX\}$ and relations~$\lbrack \tau_{x_u}, \tau_{x_v}\rbrack$, for~$\{x_u,x_v\}\in \bE$. As noted in~\cite[Proposition~$1.7$]{hamza2023extensions}, for any~$(\bX,e)$-filtration, the group~$G_\Gamma$ is~$(\bX,e)$-RAAG. 

     If $G$ is a $1$-RAAG for some graph~$\Gamma$, then we have $G_\Gamma/(G_\Gamma)_3\cong G/G_3$, thus they agree on a finite level. We refer to Section~\ref{sec:Genus Theory} for further details.

     The group~$G_\Gamma$ has several nice properties and is well studied. We refer for instance to~\cite{bartholdi2020right, lorensen2010groups, hamza2023extensions, hamza2026filtrationscohomologygraphproducts}. 
\end{exem}
Before stating the next result, we introduce some definitions and notations. We define the algebra~$\A(\Gamma)$ as the quadratic algebra over~$\F_p$ presented by generators~$ \{X_1,\dots, X_d\}$ and relations:
  \begin{itemize}
\item~$X_iX_j$ when~$\{x_i,x_j\}\notin \bE$,
\item~$X_uX_v+X_vX_u$ for~$x_u,x_v$ in~$\bX$.
\end{itemize} 
Observe that~$\dim_{\F_p}\A_n(\Gamma)=c_n(\Gamma)$, where~$c_n(\Gamma)$ is the number of~$n$-cliques of~$\Gamma$, i.e.\ complete subgraphs of~$\Gamma$ with~$n$ vertices. We also denote by~$H^\bullet(G)$ the (continuous) cohomology graded algebra of~$G$ with coefficients in~$\F_p$.
\begin{prop}\label{1-RAAG}
Assume that~$\Gamma$ is triangle-free, and~$G$ is a $1$-RAAG for~$\Gamma$. Then the presentation~$\PP$ is mild for the~$(\bX, e)$-filtration with~$e=(1,\dots ,1)$. So we have $\Ll(G)\simeq \Ll(\Gamma)$, and the following isomorphism:
$$H^\bullet(G)\simeq \A(\Gamma), \quad \text{and} \quad h^n(G)\coloneq\dim_{\F_p}H^n(G)=c_n(\Gamma).$$
\end{prop}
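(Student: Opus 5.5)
We will prove that the graded algebra $\E(\bX,\Gamma)$ attached to the presentation (Zassenhaus filtration, all weights $1$) has Hilbert series
$$\E(\bX,\Gamma,t)=\frac{1}{1-dt+|\bE|t^2}.$$
By definition this is mildness of $\PP$, since every $n_{e,j}=2$. First, because $G$ is a $1$-RAAG for $\Gamma$, the initial forms of the relations are $\rho_{x_u,x_v}=\mu(x_u,x_v)[X_u,X_v]$. Hence $\E(\bX,\PP)\simeq\E(\bX,\Gamma)$ is the universal enveloping algebra of the graded Lie algebra attached to $\Gamma$. This is the usual (discrete, $\F_p$-) right angled Artin algebra, i.e. the partially commutative algebra of $\Gamma$. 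By the Cartier--Foata theorem its Hilbert series is $1/\sum_{n}(-1)^n c_n(\Gamma)t^n$, where $c_n(\Gamma)$ counts the $n$-cliques. For a triangle-free graph, $c_n=0$ for $n\ge 3$, $c_1=d$ and $c_2=|\bE|$, which gives exactly the series above. So $\PP$ is mild, and Labute's theorem gives cohomological dimension $2$ together with the isomorphisms $\psi_\PP,\varphi_\PP$, in particular $\Ll(G)\simeq\Ll(\Gamma)$. If one wants to avoid citing Cartier--Foata, the alternative is a Gröbner/Anick argument: show that the leading monomials of the commutators, together with their overlaps, give no new obstructions exactly when there are no triangles. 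This is the step I expect to need the most care. A triangle $x,y,z$ produces an overlap $X_zX_yX_x$ that resolves trivially but changes the count, whereas triangle-freeness makes the chain complex stop in degree $2$. Note that Theorem~\ref{mildGammaRAAG} does not apply, since triangle-free graphs need not be bipartite (e.g.\ the $5$-cycle).

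For the cohomology, mildness gives $\mathrm{cd}(G)=2$. The generators and the relations are minimal, since all relations lie in $F_2$. Hence $h^1(G)=d=c_1$, $h^2(G)=|\bE|=c_2$, and $h^n=0=c_n$ for $n\ge3$. It remains to identify the cup product $H^1\otimes H^1\to H^2$. Mildness for the Zassenhaus filtration implies that $\E(G)=\grad(\F_p[\![G]\!])$ is the quadratic algebra $\E(\bX,\Gamma)$ and is Koszul, since the Anick resolution is of length $2$ with quadratic generators. Then $H^\bullet(G)$ is the Koszul dual $\E(\bX,\Gamma)^!$. This uses the standard result (Labute, or Positselski--Vishik) that for mild quadratic presentations $H^\bullet(G)\simeq\E(G)^!$. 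Dually, the cup product $\chi_u\cup\chi_v$ evaluated on the relation $l_{x_u,x_v}$ is read off from the $X_uX_v$-coefficient of its Magnus expansion. This coefficient is $\pm\mu$ if $\{x_u,x_v\}\in\bE$ and $0$ otherwise.

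Finally we compute the Koszul dual. The relation space of $\E(\bX,\Gamma)$ is spanned by the $[X_u,X_v]$ with $\{x_u,x_v\}\in\bE$. Its orthogonal complement in $(V\otimes V)^*$ is spanned by $X_i^2$, by $X_uX_v+X_vX_u$, and by $X_iX_j$ for non-edges. So $\E(\bX,\Gamma)^!=\A(\Gamma)$, and $\dim\A_n(\Gamma)=c_n(\Gamma)$ is consistent with the dimension count above. For $p$ odd, the relation $X_i^2$ follows from anticommutativity, which matches the definition of $\A(\Gamma)$.
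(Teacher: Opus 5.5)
Your proposal is correct and follows essentially the same route as the paper. The paper also gets $\E(\bX,\Gamma,t)=1/(1-dt+|\bE|t^2)$ from the clique-polynomial formula for the Hilbert series of the RAAG algebra (its cited Theorem~1.5 of Bartholdi et al.\ is the Cartier--Foata formula you invoke), and then concludes from Koszulity of $\E(\Gamma)$ and the identification of $H^\bullet(G)$ with the quadratic dual $\A(\Gamma)$; your explicit computation of $R^\perp$ and your cup-product/Magnus remark just spell out what the paper delegates to its cited Proposition~1.
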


\begin{proof}
We denote by~$d$ the number of vertices of~$\Gamma$ and by~$r$ the number of edges. We have, from~\cite[Theorem~$1.5$]{bartholdi2020right}:
    $${\E(\PP,t)=}\E(\Gamma ,t)=\frac{1}{1-dt+rt^2}.$$
Thus, the presentation~$\PP$ is mild for the~$(\bX, e)$-filtration with~$e=(1,\dots ,1)$.
    Since~$\E(\Gamma)$ is a Koszul algebra, we conclude from~\cite[Proposition~$1$]{hamza2023extensions}. 
\end{proof}

\begin{rema}
Precisely, the last proposition gives us:
$$H^0(G)\simeq \F_p, \quad H^1(G)\simeq \bigoplus_{x_u\in \bX} X_u \F_p \quad \text{and} \quad H^2(G)\simeq \bigoplus_{u<v, \{x_u,x_v\}\in \bE}X_uX_v \F_p.$$
Furthermore, we have~$X_u X_v=-X_vX_u$ for~$x_u,x_v\in \bX$ in~$H^2(G)$. Finally~$X_uX_v=0$ in~$H^2(G)$ precisely when~$\{x_u,x_v\}$ is not in~$\bE$.
\end{rema}

\section{Koch-type presentations and linking numbers}
\label{sec:Koch presentation and linking numbers}
We fix an odd prime $p$. Throughout the paper, we assume that $K$ is a number field, not containing $\zeta_p$. We denote by $r_K$ the $\Z$-rank of the unit group of $\mathcal{O}_K$. If $(r_1,r_2)$ is the signature of $K$, then $r_K=r_1+r_2-1$ by the Dirichlet unit theorem. We also define~${\rm cl}(K)$ to be the class group of~$K$ and~$d_p{\rm cl}(K)$ to be its~$p$-rank. This allows us to introduce~$d_K'\coloneq r_K+d_p{\rm cl}(K)$. We fix a set of primes~$T_K$ of~$K$ with Frobenii generating the~$p$-part of the group~${\rm cl}(K)$. Note that~$|T_K|=d_p{\rm cl}(K)$.

Let $S_1$, $S_2$ be sets of primes of~$K$. Then we denote 
\begin{align*}
    V_{S_1}^{S_2}(K)\coloneq \big\{a\in K^{\times }\mid a\in K_\q^{\times p}\text{ for }\q\in S_1\text{ and }p\mid \nu_\mathfrak{t}(a)\text{ for }\mathfrak{t}\not\in S_2\big\}/K^{\times p},
\end{align*} 
with~$\nu_\mathfrak{t}$ the normalized valuation associated to~$\mathfrak{t}$. The dual of~$V_{S_1}^{S_2}(K)$ is denoted by~$\RB_{S_1}^{S_2}(K)$ (see~\cite[Def. 10.7.8]{NSW}). The set $\RB_{S_1}^{S_2}$ is important because it governs the Tate--Shafarevich group in Galois cohomology and is often computationally accessible (cf. proof of \ref{theo-Liu}, \cite{HMRSha}). We note that if $S_1\subseteq S_1'$ and $S_2\subseteq S_2'$, then we have the following canonical surjections:
\begin{align*}
    \RB_{S_1}^{S_2}\twoheadrightarrow \RB_{S_1'}^{S_2},\qquad  \RB_{S_1}^{S_2'}\twoheadrightarrow \RB_{S_1}^{S_2},\quad \text{and}\quad \RB_{S_1}^{S_2'}\twoheadrightarrow \RB_{S_1'}^{S_2}.
\end{align*}

By \cite[Thm. 10.7.10]{NSW}, if~$S$ and~$T$ are disjoint sets of primes, then the dimension of~$\RB_S^{T}(K)$ determines the minimal number~$d(G_S^T)$ of generators of~$G_S^T$. More precisely, we have the following:
\begin{prop}
\label{prop:rank of G_S^T}
    Let $S$ be a set of tame primes of $K$ and $T$ a disjoint set of primes of $K$. Then $d(G_S^T)=|S|+\dim_{\F_p} \RB_{S}^{T}(K)-r_K-|T|$.
\end{prop}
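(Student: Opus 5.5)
This follows from the generator-rank formula of \cite[Thm.~10.7.10]{NSW}, specialised to our situation. The plan is to compute $h^1(G_S^T)=\dim_{\F_p}H^1(G_S^T,\F_p)$, since $d(G_S^T)=h^1(G_S^T)$ by the Burnside basis theorem for pro-$p$ groups.

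\textbf{Step 1 (dualise).} By class field theory, $H^1(G_S^T,\F_p)^\vee$ is the quotient $G_S^{T,\mathrm{ab}}/p$ of the ray class group. Consider the idèlic exact sequence
$$1\to \O_K^{\times}\to \prod_{\q\in S}U_\q \to \mathrm{Cl}_S^T(K)\to \mathrm{Cl}^T(K)\to 1,$$
where $\mathrm{Cl}^T(K)$ is the class group modulo the classes of primes in $T$, and the unit group on the left is replaced by the $T$-units $\O_{K,T}^\times$. Tensoring with $\F_p$ is where care is needed, because the Tor terms contribute; this is precisely what $V_S^T$ measures. Each tame $\q\in S$ contributes $\dim U_\q/U_\q^p=1$, since $N(\q)\equiv 1\pmod p$. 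The $T$-units have $\F_p$-rank $r_K+|T|$, because $\zeta_p\notin K$ means there is no torsion contribution.

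\textbf{Step 2 (Poitou--Tate / Euler characteristic route).} Alternatively, and more cleanly, I would apply Theorem 10.7.3 of \cite{NSW}, the Šafarevič--Koch formula, which gives
$$h^1(G_S^T)=1+\sum_{\q\in S}\delta_\q-\delta+\dim \RB_S^T-\sum_{v\in T}\cdots - r_K-|T|\text{-type terms}.$$
Here $\delta=\delta_\q=$ the presence of $\mu_p$ locally, and in our case $\delta=0$ for $K$. For tame $\q$ one has $\delta_\q=1$, since $\mu_p\subset K_\q$. Each split prime $v\in T$ costs exactly $1$, giving $-|T|$, while the infinite places and the constant terms combine to $-(r_1+r_2)+1=-r_K$. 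Substituting, we get $|S|+\dim\RB_S^T-r_K-|T|$.

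\textbf{Main obstacle.} The only real point is to match the bookkeeping of \cite[Thm.~10.7.10]{NSW} exactly: the archimedean terms (for $p$ odd, real places contribute nothing to ramification), the sign of the $T$-contribution, and the identification of that theorem's $\RB$ with our $\RB_S^T$ defined via $V_S^T$. I would verify these by tracking the snake lemma in Step~1. The kernel of $\O_{K,T}^\times/p\to\prod_{\q\in S}U_\q/p$ together with $\mathrm{Cl}^T[p]$ assembles into $V_S^T$, and its dimension gives exactly the correction $\dim\RB_S^T$.
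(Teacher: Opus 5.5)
Your proposal is correct and follows the paper's approach. The paper gives no proof beyond citing \cite[Thm.~10.7.10]{NSW}, and your specialisation of that formula is exactly what yields $|S|+\dim_{\F_p}\RB_S^T-r_K-|T|$: $\delta=0$ since $\zeta_p\notin K$, $\delta_\q=1$ for tame $\q$, no contribution from primes above $p$ since tame primes do not lie above $p$, $-|T|$ from the rank of the $T$-units, and $1-r_1-r_2=-r_K$.

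One small correction to your Step~1 sketch: $V_S^T$ is an extension of $\ker\bigl(\O_{K,T}^\times/p\to\prod_{\q\in S}U_\q/p\bigr)$ by $\ker\bigl(\mathrm{Cl}^T[p]\to A/p\bigr)$, not by all of $\mathrm{Cl}^T[p]$. Here $A$ is the pro-$p$ part of $\prod_{\q\in S}U_\q$ modulo the image of $\O_{K,T}^\times$, and the map is the connecting homomorphism. This is precisely the correction term that makes the count work.
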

\subsection{Koch sets}
\label{ssec:Koch tuples}
In general, knowledge of $\RB_S^{T}$ is not sufficient to determine the minimal number of relations of $G_S^T$, let alone the concrete relations. In favorable situations, as considered by Liu in~\cite{Liu24} or Salle in~\cite{Salle} one can determine the relations explicitly. We summarize their considerations in the form of \emph{Koch sets}. More precisely, fix a minimal set of generators $T_K$ of the $p$-part of $\mathrm{cl}(K)$. We say that a finite set of tame primes~$S'$ of~$K$ is \emph{Koch} if~$S'\cap T_K=\emptyset$ and
\begin{align*}
    \RB_{S'}^{T_K}(K)=1,\qquad \text{and}\qquad G_{K,S'}^{T_K}=1.
\end{align*}
We say that a finite set~$S$ of tame primes of~$K$ \emph{admits a Koch set} if~$S\cap T_K=\emptyset$ and there is $S'\subseteq S$ such that $S'$ is a Koch set. 

For an integer $n$, we define $K_n\coloneq K(\zeta_{p^n})$. Before showing the existence of Koch sets, we fix some notations which we will use frequently. Given two disjoint sets of primes $S$ and $T$ we set
\begin{align*}
    \Gov^T_S(K)\coloneq K_1\Big(\sqrt[p]{V_S^{T}}\Big).
\end{align*}
By Kummer duality it is not hard to see that $\Gal(\Gov_S^T(K)/K_1)\cong \RB_{S}^{T}$. If $T=\emptyset$ (resp. $S=\emptyset$) we simply write $\Gov_S(K)$ (resp. $\Gov^T(K)$). If $K$ is clear from the context, we simply write $\Gov_S^T$.
\begin{lemm}[{\cite[Lemma 1.3]{MaireSankara}}]
\label{lem:Governing field extension generated by Frobenius}
    Fix three sets of disjoint primes $S$, $S'$, and $T$ of~$K$. Then $\Gal(\Gov_S^T/\Gov_{S\cup S'}^T)$ is generated by the Frobenii $(v_1,\Gov_{S}^T/K_1)$, where $v_1$ is an arbitrary prime of $K_1$ above $v\in S'$.
\end{lemm}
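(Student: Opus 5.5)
We argue by Kummer theory and Chebotarev density. Let $H\subseteq \Gal(\Gov_S^T/K_1)$ be the subgroup generated by the Frobenii $(v_1,\Gov_S^T/K_1)$ for $v\in S'$ and $v_1\mid v$. Changing $v_1$ only conjugates the Frobenius, and the group is abelian, so each Frobenius is well defined. First we check that $\Gov_{S\cup S'}^T\subseteq \Gov_S^T$. Indeed $V_{S\cup S'}^T\subseteq V_S^T$, since the local $p$-th power condition at $S\cup S'$ implies it at $S$ and the valuation condition is unchanged. Both extensions are abelian of exponent $p$ over $K_1$. By Galois theory it therefore suffices to show that the fixed field $(\Gov_S^T)^H$ equals $\Gov_{S\cup S'}^T$.

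By Kummer theory over $K_1$, subextensions of $\Gov_S^T/K_1$ correspond to subgroups of $W\coloneq \mathrm{im}(V_S^T\to K_1^{\times}/K_1^{\times p})$. Since $[K_1:K]$ divides $p-1$ and is prime to $p$, the restriction map $K^\times/K^{\times p}\to K_1^\times/K_1^{\times p}$ is injective, so $W\cong V_S^T$. Under this correspondence the fixed field of $H$ is $K_1(\sqrt[p]{W^H})$. Here $W^H$ consists of those $a\in V_S^T$ such that every Frobenius at $v\in S'$ fixes $\sqrt[p]{a}$, that is, $\sqrt[p]{a}$ generates an extension of $K_1$ split at every $v_1\mid v$.

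The key local step is to show that, for $a\in V_S^T$ and $v\in S'$, the prime $v_1$ splits in $K_1(\sqrt[p]{a})$ if and only if $a\in K_v^{\times p}$. Since $v\notin T$, we have $p\mid \nu_v(a)$, and $v$ is tame, hence prime to $p$. So $K_1(\sqrt[p]{a})/K_1$ is unramified at $v_1$, and $v_1$ splits exactly when $a\in K_{1,v_1}^{\times p}$. It then remains to pass from $K_{1,v_1}$ down to $K_v$. The extension $K_{1,v_1}/K_v$ has degree dividing $p-1$, prime to $p$, so the map $K_v^\times/K_v^{\times p}\to K_{1,v_1}^\times/K_{1,v_1}^{\times p}$ is injective by a norm or corestriction argument. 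Hence $a\in K_{1,v_1}^{\times p}$ if and only if $a\in K_v^{\times p}$. (When $N(v)\equiv 1 \pmod p$, $v$ in fact splits completely in $K_1$ and $K_{1,v_1}=K_v$.) We conclude that $W^H$ is exactly the image of $V_{S\cup S'}^T$, which gives $(\Gov_S^T)^H=\Gov_{S\cup S'}^T$.

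The main obstacle is to keep the Kummer correspondence over $K_1$ compatible with the groups defined over $K$, namely the two injectivity statements for degree prime to $p$ (global and local). Once these hold, the lemma follows formally. The argument uses no equidistribution; Chebotarev enters only in later applications of the lemma.
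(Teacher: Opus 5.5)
Your proof is correct and is the standard Kummer-duality argument. The paper gives no proof of its own and simply cites Maire--Sankara. Your fixed-field computation is the right one: the fixed field of the Frobenii is generated by $p$-th roots of exactly those $a\in V_S^T$ that are local $p$-th powers at every $v\in S'$, and these form $V_{S\cup S'}^T$, so the fixed field is $\Gov_{S\cup S'}^T$.

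One slip needs fixing. Two primes $v_1\neq v_1'$ of $K_1$ above the same tame $v$ have Frobenii that are conjugate in the non-abelian group $\Gal(\Gov_S^T/K)$. They therefore differ by the cyclotomic twist: they are prime-to-$p$ powers of each other, not equal. This is harmless, because your local criterion ($v_1$ splits in $K_1(\sqrt[p]{a})$ iff $a\in K_v^{\times p}$) holds for each single choice of $v_1$, so the generated subgroup does not depend on the choice.
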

\begin{lemm}[{\cite[Prop. 2.3]{MaireSankara}}]
\label{lem:Unramified exact sequence}
    Let $S$ and $T$ be disjoint sets of primes of $K$ then there is an exact sequence
    \begin{align*}
        0\to H^1(G_S^T)\to H^1(G_S)\overset{\psi_S^T}\longrightarrow\bigoplus_{\mathfrak{t}\in T}H^1(\GG_{\mathfrak{t}}^{\rm ur})\overset{\varphi_S^T}\longrightarrow \RB_{S}^{T}\to \RB_S\to 0
    \end{align*}
    where $\GG_\mathfrak{t}^{\rm ur}\cong \Z_p$ is the Galois group of the maximal unramified pro-$p$ extension of $K_\mathfrak{t}$.
\end{lemm}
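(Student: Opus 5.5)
The plan is to deduce the sequence from the Poitou--Tate exact sequence comparing two Selmer structures for $\F_p$-coefficients. Nothing from the earlier results of the paper is needed; the argument is a direct application of global duality. Since $S$ and $T$ are disjoint and $T$ consists of primes outside $S$, every class in $H^1(G_S)=H^1(G_S,\F_p)$ is unramified at each $\mathfrak t\in T$. So its restriction to the decomposition group at $\mathfrak t$ lies in $H^1(\GG_{\mathfrak t}^{\rm ur})\cong \F_p$, and the map $\psi_S^T$ is this restriction.

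\emph{Step 1: the first three terms.} A class in $H^1(G_S)$ corresponds to a cyclic degree-$p$ extension unramified outside $S$. It lies in the kernel of $\psi_S^T$ exactly when the extension is split at every $\mathfrak t\in T$. This says precisely that it factors through $G_S^T$, which gives exactness at $H^1(G_S^T)$ and at $H^1(G_S)$.

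\emph{Step 2: Selmer structures.} Define a Selmer structure $\mathcal L$ on $\F_p$ by the following local conditions:
\begin{itemize}
\item the full $H^1$ at $\q\in S$;
\item the unramified subgroup $H^1_{\rm ur}$ at every other finite prime, including the primes above $p$ and those in $T$;
\item the trivial condition at archimedean places, which is automatic since $p$ is odd.
\end{itemize}
Then $H^1_{\mathcal L}=H^1(G_S)$. Let $\mathcal L'\subseteq\mathcal L$ agree with $\mathcal L$ except that at $\mathfrak t\in T$ the condition is $0$. Then $H^1_{\mathcal L'}=H^1(G_S^T)$, and $\mathcal L_{\mathfrak t}/\mathcal L'_{\mathfrak t}=H^1(\GG_{\mathfrak t}^{\rm ur})$. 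The standard consequence of Poitou--Tate duality for a pair $\mathcal L'\subseteq\mathcal L$ is the exact sequence
\[
0\to H^1_{\mathcal L'}\to H^1_{\mathcal L}\to \bigoplus_{\mathfrak t\in T}\mathcal L_{\mathfrak t}/\mathcal L'_{\mathfrak t}\to \bigl(H^1_{\mathcal L'^{*}}(\mu_p)\bigr)^{\vee}\to \bigl(H^1_{\mathcal L^{*}}(\mu_p)\bigr)^{\vee}\to 0 .
\]
The map into $(H^1_{\mathcal L'^*})^\vee$ is given by summing local Tate pairings. Its exactness in the middle is the global reciprocity law, i.e.\ that the sum of local invariants is zero.

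\emph{Step 3: identifying the dual Selmer groups.} By Kummer theory, $H^1(K,\mu_p)=K^\times/K^{\times p}$, and this holds even though $\zeta_p\notin K$. The dual local conditions under local Tate duality are as follows:
\begin{itemize}
\item at $\q\in S$ the annihilator of everything is $0$, i.e.\ $a\in K_\q^{\times p}$;
\item at other finite primes the annihilator of the unramified classes is the unramified Kummer classes, i.e.\ $p\mid \nu(a)$;
\item at $\mathfrak t\in T$, for $\mathcal L'^*$, the annihilator of $0$ is everything.
\end{itemize}
Hence $H^1_{\mathcal L^*}(\mu_p)=V_S^{\emptyset}$ and $H^1_{\mathcal L'^*}(\mu_p)=V_S^T$. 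Their duals are $\RB_S$ and $\RB_S^T$, and the last map is the dual of the inclusion $V_S\subseteq V_S^T$, i.e.\ the canonical surjection. This yields the claimed sequence.

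\emph{Main obstacle.} The delicate step is the local duality bookkeeping. At primes above $p$, one must check that the annihilator of $H^1_{\rm ur}(K_v,\F_p)$ is exactly the classes with $p\mid\nu_v(a)$, which is the unit classes mod $p$-th powers. Here $K_v$ may contain $\zeta_p$ even when $K$ does not, and one must make sure no extra wild contribution appears. I would handle this by working over $K_1=K(\zeta_p)$ and taking invariants under $\Gal(K_1/K)$, whose order is prime to $p$. This reduces to the classical statement that unramified classes and units are exact annihilators in the Kummer pairing.
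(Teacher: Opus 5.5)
The paper gives no proof of this lemma; it simply quotes it from Maire--Sankara. Your proposal is therefore a self-contained substitute rather than a reconstruction, and it is correct.

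With your $\mathcal L'\subseteq\mathcal L$ one has $H^1_{\mathcal L}=H^1(G_S)$ and $H^1_{\mathcal L'}=H^1(G_S^T)$. The dual Selmer groups inside $K^\times/K^{\times p}$ are exactly $V_S$ and $V_S^T$: the local conditions are stable under $K^{\times p}$, and archimedean places contribute nothing since $p$ is odd. The standard five-term Poitou--Tate sequence for nested Selmer structures then gives the statement as formulated, even when $S$ or $T$ contains primes above $p$.

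What your route buys over the bare citation is an explicit formula for $\varphi_S^T$. Up to sign, $\varphi_S^T(\chi)$ is the functional $a\mapsto\sum_{\mathfrak t\in T}\nu_{\mathfrak t}(a)\,\chi_{\mathfrak t}(\mathrm{Frob}_{\mathfrak t})$ on $V_S^T$. This makes the functoriality in $T$ transparent. It is exactly what justifies the later step, in the proof of the Maire--Sankara-type lemma, where $\varphi^\Theta(\chi)$ is said to restrict trivially to $\Gov^{T_K}$ because $\chi_t=0$ for $t\in T_K$.

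Some remarks on your commentary, none of which affects validity.

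\begin{itemize}
\item The step you flag as the main obstacle is not one. For every finite $v$, including $v\mid p$ and regardless of whether $\zeta_p\in K_v$, local class field theory identifies the cup product $H^1(K_v,\F_p)\times H^1(K_v,\mu_p)\to\F_p$ with $(\chi,a)\mapsto\pm\chi(\mathrm{rec}_v(a))$. Since $\mathrm{rec}_v(U_v)$ is the inertia group, $U_vK_v^{\times p}/K_v^{\times p}$ annihilates the line $H^1_{\rm ur}(K_v,\F_p)$. Both this subgroup and the full annihilator have codimension one, the latter by perfectness, so they coincide. The descent to $K_1$ is unnecessary, and the statement over $K_1$ you would reduce to is proved by this same argument.
\item Global reciprocity only shows that $H^1_{\mathcal L}\to\bigoplus_{\mathfrak t}\mathcal L_{\mathfrak t}/\mathcal L'_{\mathfrak t}\to (H^1_{\mathcal L'^{*}})^\vee$ is a complex. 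The inclusion of the kernel in the image is the genuine Poitou--Tate input, which you are in any case taking from the cited five-term theorem.
\item $T$ must be finite, as it is throughout the paper, for restriction to land in the direct sum.
\end{itemize}
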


\begin{lemm}
\label{lem:Existence of Koch Tuples}
    For a given number field~$K$, let~$T_K$ be a set of primes, such that the classes form a minimal generating set of ${\rm cl}(K)/p$. Given a finite set of primes $S$, there exists a Koch set~$S'$ disjoint from $S$ of size $d_K'$.
\end{lemm}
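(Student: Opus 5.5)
Two conditions must be arranged: $\RB_{S'}^{T_K}(K)=1$ and $G_{K,S'}^{T_K}=1$. I would construct $S'$ prime by prime via the Chebotarev density theorem, using Lemma~\ref{lem:Governing field extension generated by Frobenius} to kill $\RB$ and Proposition~\ref{prop:rank of G_S^T} to make $G_{K,S'}^{T_K}$ trivial.

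\textbf{Making $\RB_{S'}^{T_K}$ trivial.} First note that $V_\emptyset^{T_K}(K)$ is finite-dimensional over $\F_p$. An element $a$ in it generates an ideal $(a)=\mathfrak{b}^p\prod_{\mathfrak{t}\in T_K}\mathfrak{t}^{n_\mathfrak{t}}$. Since the classes of $T_K$ are independent in ${\rm cl}(K)/p$, we get $p\mid n_\mathfrak{t}$. Hence $a$ is, up to $p$-th powers, a unit times an element whose ideal is a $p$-th power, and this space has dimension $r_K+1+d_p{\rm cl}(K)$ modulo $K^{\times p}$. But $\zeta_p\notin K$, so the torsion of $\O_K^\times$ contributes nothing, and $\dim V_\emptyset^{T_K}=r_K+d_p{\rm cl}(K)=d_K'$. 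Thus $\Gal(\Gov^{T_K}/K_1)\cong(\Z/p)^{d_K'}$. By Lemma~\ref{lem:Governing field extension generated by Frobenius}, choosing primes $\q_1,\dots,\q_{d_K'}$ whose Frobenii in $\Gov^{T_K}/K_1$ form a basis gives $\Gov_{S'}^{T_K}=K_1$, that is, $\RB_{S'}^{T_K}=1$. These primes are taken tame, outside $S\cup T_K$, and split in $K_1/K$, which is possible by Chebotarev. Since $\Gov^{T_K}/K$ is Galois and the extension $\Gov^{T_K}/K_1$ is linearly disjoint enough (it is Kummer, with $\Gal(K_1/K)$ acting via nontrivial characters compatible with Frobenius choices), elements of $\Gal(\Gov^{T_K}/K_1)$ are Frobenii of primes of $K$ of degree one split in $K_1$, hence tame.

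\textbf{Making $G_{K,S'}^{T_K}$ trivial.} With $\RB_{S'}^{T_K}=1$, Proposition~\ref{prop:rank of G_S^T} gives
\[
d(G_{K,S'}^{T_K})=|S'|-r_K-|T_K|=d_K'-r_K-d_p{\rm cl}(K)=0.
\]
Hence $G_{K,S'}^{T_K}$, being pro-$p$ with no generators, is trivial.

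The main obstacle I expect is the simultaneous Chebotarev condition: the primes must be tame (split in $K_1$) and have prescribed Frobenius in $\Gov^{T_K}/K_1$. I would handle it by applying Chebotarev in the Galois extension $\Gov^{T_K}/K$ and choosing conjugacy classes lying in the subgroup $\Gal(\Gov^{T_K}/K_1)$. This subgroup is abelian, so a class is determined up to the $\Gal(K_1/K)$-action. That action scales each Kummer component by a character and therefore preserves spans, so a basis still results after choosing representatives.
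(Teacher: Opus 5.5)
Your proof is correct, but it reaches the two Koch conditions by a different route from the paper's.

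\textbf{The paper's route.} It works with the governing field $\Gov_\emptyset$, which has no splitting condition. It picks $S'$ so that the Frobenii form a basis of $\RB_\emptyset$, which gives $\RB_{S'}=1$. Hence $H^1(G_{K,S'})\cong H^1(G_{K,\emptyset})$, i.e.\ every $\Z/p$-extension unramified outside $S'$ is unramified. It then uses the fact that the Frobenii at $T_K$ generate ${\rm cl}(K)/p$, fed into the exact sequence of Lemma~\ref{lem:Unramified exact sequence}, to get $\RB_{S'}^{T_K}=1$ and $H^1(G_{K,S'}^{T_K})=0$ at once.

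\textbf{Your route.} You observe that independence of the classes of $T_K$ in ${\rm cl}(K)/p$ forces $V_\emptyset^{T_K}=V_\emptyset$. So in fact $\Gov^{T_K}=\Gov_\emptyset$, and both proofs prescribe Frobenii in the same field. You then kill $\RB^{T_K}$ directly via Lemma~\ref{lem:Governing field extension generated by Frobenius}, and obtain $G_{K,S'}^{T_K}=1$ from the generator count of Proposition~\ref{prop:rank of G_S^T}.

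\textbf{What each buys.} Your version is more elementary, since it never needs the Maire--Sankara sequence. It is also more careful than the paper about the Chebotarev step. Frobenius is only defined up to the $\Gal(K_1/K)$-action on $\Gal(\Gov^{T_K}/K_1)\cong\mathrm{Hom}(V_\emptyset^{T_K},\mu_p)$. That action is multiplication by a single scalar, the cyclotomic character, because $V$ has trivial Galois action. So a basis stays a basis, a point the paper glosses over. The paper's version, in turn, makes the mechanism visible: there are no genuinely ramified $\Z/p$-extensions over $S'$, and the $T_K$-splitting cuts out exactly the unramified ones.

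\textbf{Two cosmetic points.} First, the ``linearly disjoint enough'' sentence is unnecessary. Chebotarev in the Galois extension $\Gov^{T_K}/K$ alone yields primes whose Frobenius lies in $\Gal(\Gov^{T_K}/K_1)$, i.e.\ primes split in $K_1/K$, and away from $p$ these are exactly the tame ones. Second, the dimension count is cleaner via the exact sequence $0\to\O_K^\times/p\to V_\emptyset\to{\rm cl}(K)[p]\to 0$, with $\dim_{\F_p}\O_K^\times/p=r_K$ because $\zeta_p\notin K$.
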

\begin{proof}
    We first notice that $\dim_{\F_p}(\RB_\emptyset)=d_K'$. Now choose a basis $\mathcal{S}$ of $\RB_{\emptyset}$ and a set $S'$ disjoint from $S$, using the Chebotarev density theorem, such that each $\sigma_\q\coloneq (\q,\Gov_\emptyset/K)$ is contained in $\RB_{\emptyset}$ and $\{\sigma_\q\mid \q\in S'\}=\mathcal{S}$. By Lemma~\ref{lem:Governing field extension generated by Frobenius} we conclude that $\RB_{S'}=1$ and thus $H^1(G_{S'})\cong H^1(G_\emptyset)$ by \cite[Lem. 10.7.4 (i)]{NSW}, which implies that the maximal elementary abelian extension of $K$ unramified outside $S'$ is unramified.

    Since $K_{S'}^{\rm el,ab}=K_{\smash{\emptyset}}^{\rm el,ab}$, we have that the elements $(\t,K_{S'}^{\rm el,ab}/K)$ for $\t\in T_K$ generate $\Gal(K_{S'}^{\rm el,ab}/K)\cong {\rm cl}(K)/p$ and thus by Lemma~\ref{lem:Unramified exact sequence} we conclude that $\RB_{S'}^{T_K}(K)=1$. From $H^1(G_{S'}^{T_K})=0$, we deduce that $G_{S'}^{T_K}=1$ and thus $S'$ is a Koch set.
\end{proof}

\begin{rema}\label{rem:Application existence Koch tuple}
\begin{enumerate}
\item If $S'$ is a Koch set, then Proposition \ref{prop:rank of G_S^T} together with $G_{K,S'}^T=1$ implies that $|S'|=d'_K$.
\item Let~$T_K$ and~$T_K'$ be two minimal generating sets of the $p$-part of~${\rm cl}(K)$. Assume that~$S'$ is a Koch set for~$T_K$ and that~$S' \cap T'_K=\emptyset$. Then~$S'$ is also a Koch set for~$T_K'$.
\end{enumerate}
\end{rema}
We briefly discuss statistics on Koch sets. Let $T_K$ be a fixed set of primes of $K$ such that their classes form a minimal generating set of ${\rm cl}(K)/p$. Under these assumptions, we obtain the following proposition.


\begin{prop}
\label{prop:Probabilty for existence of Koch set}
Let~$d \geq 0$ be a fixed integer. For each ~$X > 0$, let~$A(X)$ consist of sets made up of~$d+d_K'$ tame primes~$\p$ of~$K$ satisfying~$\mathbf{N}\p \leq X$. Let~$B(X)$ be the subset of $A(X)$ consisting of those sets $S$ that admit a Koch set. Then we have
\begin{equation*}
    \lim_{X \to \infty} \frac{|B(X)|}{|A(X)|} = \prod_{i=0}^{d'_k-1} (1-p^{i-(d+d_K')}).
\end{equation*}

\end{prop}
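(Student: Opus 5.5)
The proof reduces the Koch property to a linear-algebra condition on Frobenius elements, and then computes a probability via Chebotarev.

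\textbf{Step 1: reduction to Frobenius classes.} Consider the finite Galois extension $L\coloneq \Gov_\emptyset(K)\cdot K_\emptyset^{\rm el,ab}$ of $K$ (compositum with the Hilbert-type field fixed by the $p$-class group data). A tame prime $\q\notin T_K$, unramified in $L$, determines a Frobenius class $\sigma_\q=(\q,\Gov_\emptyset/K_1)\in \Gal(\Gov_\emptyset/K_1)\cong \RB_\emptyset$, which is an $\F_p$-space of dimension $d_K'$. Since $\q$ is tame, it splits in $K_1/K$, so this makes sense. By Lemma~\ref{lem:Governing field extension generated by Frobenius}, $\RB_{S'}=1$ exactly when $\{\sigma_\q\}_{\q\in S'}$ spans $\RB_\emptyset$. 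The proof of Lemma~\ref{lem:Existence of Koch Tuples} then shows that $\RB_{S'}^{T_K}=1$ and $G_{S'}^{T_K}=1$ in that case.

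Conversely, suppose $S'$ is Koch. Lemma~\ref{lem:Unramified exact sequence} shows $\RB_{S'}\twoheadleftarrow \RB_{S'}^{T_K}=1$, so the Frobenii span. Remark~\ref{rem:Application existence Koch tuple}(1) gives $|S'|=d_K'$.

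Hence a set $S$ of $d+d_K'$ tame primes avoiding $T_K$ admits a Koch set if and only if the vectors $\{\sigma_\q\}_{\q\in S}$ span $\RB_\emptyset\cong\F_p^{d_K'}$. Indeed, a spanning family contains a basis, and a basis is Koch by the argument above.

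\textbf{Step 2: equidistribution.} Apply the Chebotarev density theorem to $\Gov_\emptyset/K$, counting only tame primes, i.e.\ primes splitting completely in $K_1$. Among these primes with $\mathbf N\p\le X$, the Frobenius $\sigma_\p$ is asymptotically equidistributed on $\Gal(\Gov_\emptyset/K_1)$. The finitely many ramified primes and the primes of $T_K$ are negligible.

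Sets in $A(X)$ are unordered sets of distinct primes. Passing between ordered tuples with distinct entries and unordered sets changes both counts by the same factor $(d+d_K')!$. The proportion of tuples with repeated entries tends to $0$.

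Applying Chebotarev jointly, via the product distribution on $m=d+d_K'$ independent primes, gives
\[
\frac{|B(X)|}{|A(X)|}\longrightarrow \frac{\#\{(v_1,\dots,v_m)\in(\F_p^{d_K'})^m \text{ spanning}\}}{p^{d_K'm}}.
\]
To justify the joint statement, use the fact that the counting function for $m$-tuples with prescribed Frobenii is a product of single counts, each asymptotically $\pi_{\rm tame}(X)/p^{d_K'}$.

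\textbf{Step 3: counting spanning tuples.} A family of $m$ vectors spans $\F_p^{k}$, with $k=d_K'$, exactly when the $k\times m$ matrix has rank $k$. This happens exactly when its $k$ rows, viewed as vectors of $\F_p^m$, are linearly independent. The number of such row tuples is $\prod_{i=0}^{k-1}(p^m-p^i)$. Dividing by $p^{km}$ gives $\prod_{i=0}^{k-1}(1-p^{i-m})$, which is the claimed limit.

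\textbf{Main obstacle.} The main obstacle is Step 1, specifically the equivalence ``admits a Koch set $\iff$ Frobenii span''. One direction needs the argument of Lemma~\ref{lem:Existence of Koch Tuples}. The other needs Lemma~\ref{lem:Unramified exact sequence} to pass from $\RB^{T_K}$ to $\RB$. One also has to handle the condition $S\cap T_K=\emptyset$ and ensure that Frobenius is well defined independently of the choice of prime above $\q$, which holds since the group is abelian.
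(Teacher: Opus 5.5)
Your proposal is correct and follows essentially the paper's proof. Both reduce ``admits a Koch set'' to the Frobenii of $S$ spanning the $d_K'$-dimensional group $\Gal(\Gov/K_1)$, then apply joint Chebotarev and count full-rank $d_K'\times(d+d_K')$ matrices over $\F_p$. You work with $\Gov_\emptyset$ via Lemma~\ref{lem:Existence of Koch Tuples} and Lemma~\ref{lem:Unramified exact sequence}, while the paper uses $\Gov^{T_K}$ via Proposition~\ref{prop:rank of G_S^T}; the two fields coincide because $V_\emptyset^{T_K}=V_\emptyset$ when $T_K$ is independent in ${\rm cl}(K)/p$.

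One small correction: $\Gal(\Gov_\emptyset/K)$ is not abelian, so $(\q_1,\Gov_\emptyset/K_1)$ depends on the choice of $\q_1\mid\q$, up to scalar multiplication by the cyclotomic character of $\Gal(K_1/K)$. Only these scalar-orbits are equidistributed (as the image of the uniform distribution), but spanning is invariant under rescaling, so your count is unaffected.
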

\begin{proof}
For a prime ideal $\q$ of $K$, fix a prime $\q_1$ of $K_1$ above $\q$. For a finite set $S'$ of tame primes and finite set $T$ of primes with $T \cap S'=\emptyset$, $\RB_{S'}^T=1$ and $G_{K,S'}^T=1$ if and only if the set $W \coloneq\{(\q_1, \Gov^T/K_1)\}_{\q \in S'}$ is a basis of the $\F_p$-vector space $\Gal(\Gov^T/K_1)$; By Lemma \ref{lem:Governing field extension generated by Frobenius}, $\RB_{S'}^T=1$ is equivalent to $W$ spanning $\Gal(\Gov^T/K_1)$. Moreover, Proposition \ref{prop:rank of G_S^T} shows that, under this condition, $G_{K,S'}^T=1$ holds if and only if $W$ is a basis. Hence, a set $S$ admits a Koch set if and only if $\{(\q_1, \Gov^T/K_1)\}_{\q \in S}$ spans $\Gal(\Gov^T/K_1)$.

Now let $C(X)$ be the set of $(d+d_K')$-tuples $(\q_i)_i$ of tame primes~of~$K$ satisfying~$\mathbf{N}\q_i \leq X$ for all $i$, and let $D(X) \subset C(X)$ be the subset for which $\{(\q_i, \Gov^T/K_1)\}_{1 \leq i \leq d+d'_K}$ spans $\Gal(\Gov^T/K_1)$. Then we have
\begin{equation*}
 \lim_{X \to \infty} \frac{|B(X)|}{|A(X)|} = \lim_{X \to \infty} \frac{|D(X)|}{|C(X)|},  
\end{equation*}
since both $|C(X)|-|A(X)|$ and $|D(X)|-|B(X)|$ are of order $O(\pi(X)^{d+d'_K-1})$. On the other hand, the Chebotarev density theorem implies
\begin{equation*}
\lim_{X \to \infty} \, \frac{|C(X)|}{\pi(X)^{d+d'_K}} = \frac{1}{[K_1:K]^{d+d'_K}} \quad \text{and} \quad \lim_{X \to \infty} \, \frac{|D(X)|}{\pi(X)^{d+d'_K}} = \frac{N(d+d'_K, d'_K, p)}{([K_1:K] \cdot p^{d'_K})^{d+d'_K}}
\end{equation*}
where $N(a,b,p)$ denotes the number of $a \times b$ matrices over $\F_p$ of full rank. The claim now follows from the explicit formula for $N(a,b,p)$ (cf.~\cite[Theorem 7.1.5]{matrixnumber}).
\end{proof}

\begin{rema}
    If we let $d$ tend to $\infty$ in Proposition~\ref{prop:Probabilty for existence of Koch set} then the right hand side tends to $1$. Thus, one can expect in practice that any $S$, which is large enough admits a Koch set (for some~$T_K$, see Remark~\ref{rem:Application existence Koch tuple}).
\end{rema}

\subsection{Koch-type presentations and linking data}
\label{ssec:Koch presentations and linking data}
Fix a prime $p$, and two integers~$d$ and~$d'$. Given two finite sets~$\bX=\{x_1,...,x_d\}$ and~$\bX'=\{x_1',...,x_{d'}'\}$. A \emph{linking tuple} $\lambda$ is a quintuple~$\lambda\coloneq (\bX',\bX,\varepsilon,\mu,\omega)$ consisting of the following data:
\begin{itemize}
    \item a function $\varepsilon:\bX'\times \bX\to \{0,1\}$, to which we associate for each $x'\in \bX'$ a set $\Supp_\varepsilon(x')\coloneq\{x\in \bX\mid \varepsilon(x',x)=1\}$;
    \item a function $\mu:D_\varepsilon\to \F_p$ where 
    \begin{align*}
        D_\varepsilon=\bigg\{(x,y)\in (\bX\amalg \bX')\times \bX\left|{x\in\bX\text{ and }y\neq x\text{ or }\atop x\in \bX' \text{ and }y\in\bX\setminus  \Supp_\varepsilon(x)}\right.\bigg\};
    \end{align*}
    \item a sequence $\omega\coloneq (\omega_z)_{z\in \bX'\amalg \bX}$ with $0\neq \omega_z \in p\Z_p$ for $z\in \bX\amalg \bX'$.
\end{itemize}
If $\bX'=\emptyset$, we simply write $(\bX,\mu,\omega)$ instead of the full quintuple. 
\begin{defi}

    A pro-$p$ group $G$ is said to be represented by a linking tuple $\lambda=(\bX',\bX,\varepsilon,\mu,\omega)$ if 
    \begin{align*}
        G\cong\bigg \langle \tau_{x_1},...,\tau_{x_d}\left| {(\tau_{x_i'})^{\omega_{x_i'}}\lbrack \tau_{x_i'},y_{x_i'}\rbrack=1\text{ for }x_i'\in \bX' \atop \tau_{x_j}^{\omega_{x_j}}\lbrack \tau_{x_j},y_{x_j}\rbrack =1\text{ for } x_j\in \bX }\right.\bigg\rangle 
    \end{align*}
    for some elements $(\tau_{x_i'})_{x_i'\in \bX'}$, $(y_{x_i'})_{x_i'\in \bX'}$ and $(y_{x_j})_{x_j\in \bX}$ in $F\coloneq F(\bX)$ satisfying
    \begin{align*}
        \tau_{x_i'}\equiv \prod_{s=1}^d \tau&_{x_s}^{\mu'(x_i',x_s)}\pmod {F_2},\quad y_{x_j}\equiv \prod_{s=1}^d\tau_{x_s}^{\mu(x_j,x_s)}\pmod{\langle F_2, \tau_{x_j}\rangle }\\
        \text{and}&\qquad y_{x_i'}\equiv \prod_{s=1}^d \tau_{x_s}^{\mu(x_i',x_s)}\pmod{\langle F_2,\Supp_\lambda(x_i')\rangle },
    \end{align*}
    where $\mu':\bX'\times \bX\to \F_p$ is a function satisfying $\mu'(x',x)=0$ iff $\varepsilon(x',x)=0$. 
\end{defi}
Now, let $K$ be a number field not containing~$\zeta_p$, and we fix~$T_K$. Let $S$ be a finite set of tame primes admitting a Koch set~$S'$. We construct a linking tuple $\lambda_S^T$ in the following way. Let $\bX'\coloneq S'$ and $\bX\coloneq S\setminus S'$. Then by Lemma~\ref{lem:Unramified exact sequence} we have 
\begin{align*}
    (K_S^T)^{\rm el,ab}\simeq \bigoplus_{x\in \bX}\F_px.
\end{align*}
For $x'\in \bX'$ fix a generator $\tau_{x'}$ of the inertia group of $x'$ in $(K_S^T)^{\rm el,ab}/K$. For $x\in \bX$ let $\mu(x',x)$ be the $x$-coordinate of $\tau_{x'}$. We also fix a Frobenius $\varphi_{x'}$ in the maximal subextension of $(K_S^T)^{\rm el,ab}/K$ unramified at $x'$ and set $\mu(x',x)$ to be the $x$ coordinate of $\varphi_{x'}$ for $x$ with $\varepsilon(x',x)=0$. It is not hard to see that this value is indeed well defined.

Similarly, for $x\in \bX$ we let $\varphi_x$ be a Frobenius in $(K_{S\setminus \{x\}}^T)^{\rm el,ab}/K$ and let for $y\in S\setminus \{x\}$ be $\mu(x,y)$ the $y$-coordinate of $\varphi_x$. We set $\omega_z\coloneq N(\q_z)-1$, where $\q_{z}$ is the prime ideal corresponding to $z\in \bX\amalg \bX'$.

These constitute $\lambda_S^{T_K}=(\bX',\bX,\varepsilon,\mu,\omega)$. If we want to distinguish the data in the linking tuple from another one, we sometimes write $\mu_S^{T_K}$ instead of $\mu$, etc.. 

The following Theorem shows how $\lambda_S^{T_K}$ and $G_S^{T_K}$ are related:
\begin{theo}\label{theo-Liu}
    We fix~$T_K$. If~$S$ admits a Koch set, then $G_S^{T_K}$ is represented by the linking tuple $\lambda_S^{T_K}$.
\end{theo}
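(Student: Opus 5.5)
We derive the presentation from Koch's theorem on $G_{K,S}^{T_K}$ together with the Koch set hypothesis. The plan is to produce a presentation with $|S|$ local relations, one for each prime of $S$, and then eliminate the generators attached to $\bX'=S'$.

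\textbf{Step 1 (generators and local relations).} Since $\RB_{S}^{T_K}(K)$ is a quotient of $\RB_{S'}^{T_K}(K)=1$, Proposition~\ref{prop:rank of G_S^T} gives $d(G_S^{T_K})=|S|-r_K-|T_K|=|S|-d'_K=|\bX|$. Indeed $|S'|=d'_K$ by Remark~\ref{rem:Application existence Koch tuple}. The vanishing of $\RB_S^{T_K}$ also kills the Tate--Shafarevich obstruction $\Sha^2$, via the standard duality $\Sha^2\hookrightarrow \RB_S^{T}$ of \cite[10.7]{NSW}. Hence the product of restriction maps $H^2(G_S^{T_K})\to\bigoplus_{\q\in S}H^2(\GG_\q)$ is injective. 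Each $H^2(\GG_\q)$ is one-dimensional because $\q$ is tame, so the relations are induced by the local relations $\sigma_\q^{N\q-1}[\sigma_\q,\tau_\q]=1$. Here $\tau_\q$ is a generator of inertia and $\sigma_\q$ a Frobenius lift. This is the Koch argument (\cite{Koch}, \cite[Thm.~1.1]{Liu24}).

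\textbf{Step 2 (choosing generators).} By Lemma~\ref{lem:Unramified exact sequence} and the triviality of $G^{T_K}_{K,S'}$, the inertia elements $\tau_x$ for $x\in\bX$ map to a basis of $(K_S^{T_K})^{\rm el,ab}\simeq\bigoplus_{x\in\bX}\F_p x$. By Burnside's basis theorem they therefore generate $G_S^{T_K}$, and we lift them to a free pro-$p$ group $F(\bX)$.

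\textbf{Step 3 (the relations).} The inertia generators $\tau_{x'}$ for $x'\in S'$ are elements of $F$. Modulo $F_2$, their coordinates are $\mu'(x',x)$ by definition, and these vanish exactly where $\varepsilon$ vanishes. For the Frobenius lifts we take $y_x=\sigma_x$ and $y_{x'}=\sigma_{x'}$. Modulo $\langle F_2,\tau_x\rangle$, the image of $\sigma_x$ in the extension unramified at $x$ is $\varphi_x$, which gives the coordinates $\mu(x,\cdot)$. For $x'$ one works modulo $\langle F_2,\Supp_\varepsilon(x')\rangle$, and the coordinates are $\mu(x',\cdot)$. Rewriting the local relations as $\tau^{\omega}[\tau,y]=1$ (after inverting or conjugating as needed) yields exactly the representation by $\lambda_S^{T_K}$, with $\omega_z=N(\q_z)-1$.

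\textbf{Main obstacle.} The main difficulty is Step~1: proving that the $|S|$ local relations suffice, with no global relations. This requires identifying $\Sha^2(G_S^{T_K})$ with a subgroup of the dual of $\RB_S^{T_K}$ in the presence of the splitting set $T_K$, and checking that no additional relations arise from the primes of $T_K$. Here I would follow Liu's adaptation of \cite[Thm.~11.10]{Koch}. A second, smaller point is verifying that the $\mu$-coordinates are well defined modulo the stated subgroups, i.e.\ that they are independent of the choice of lift.
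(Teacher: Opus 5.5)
Your proposal follows the paper's proof. The Koch-set hypothesis gives $\RB_S^{T_K}=1$, which yields the generator count via Proposition~\ref{prop:rank of G_S^T} and, through an injection $\Sha_S^{T_K}\hookrightarrow\RB_S^{T_K}$, the vanishing of $\Sha_S^{T_K}$; Koch's Theorem~6.11 then gives the presentation by the $|S|$ local tame relations, with generators and linking coordinates read off as in your Steps~2--3. The step you flag as the main obstacle is exactly what the paper supplies, by rerunning Koch's proof of Theorem~11.3 with $\mathcal{T}_S$ replaced by the normal subgroup $\mathcal{T}_S^{T_K}$ generated by the inertia groups outside $S$ together with the decomposition groups at the primes of $T_K$.
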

\begin{rema}
    Liu~\cite[Theorem~$1.1$]{Liu24} proved a more general result in a different context, from which Theorem~\ref{theo-Liu} follows. We give a short proof of this case following the lines of Koch~\cite[Section 11]{Koch}. 
\end{rema}
\begin{proof}[Proof of Theorem~\ref{theo-Liu}]
By assumption, we have~$\RB_{S'}^{{T_K}}=1$ and~$G_{K,S'}^{T_K}=1$. Thus, Proposition~\ref{prop:rank of G_S^T} implies that the inertial generators at the primes $\q \in S \setminus S'$ form a minimal system of generators of $G_{K,S}^{T_K}$. By Theorem 6.11 of \cite{Koch}, it remains to prove the existence of an injective map $\Sha_S^{T_K} \to \RB_S^{{T_K}}$, where $\Sha_S^{T_K}$ denotes the kernel of the map $H^2(G_{K,S}^{T_K}) \to \bigoplus_{\p \in S} H^2(\mathcal{G}_\p)$.

The following argument closely follows the proof of~\cite[Theorem~11.3]{Koch}. Thus, we only indicate the necessary modifications. Instead of the subgroup $\mathcal{T}_S$ considered in \cite{Koch}, namely the normal subgroup of the maximal pro-$p$ Galois group $G_K$ of $K$ generated by inertial subgroups at primes outside $S$, we use the larger normal subgroup $\mathcal{T}^{T_K}_S$ generated by the inertia subgroups at primes outside $S$ together with the decomposition subgroups at the primes in $T_K$. In this setting, we obtain an exact sequence $$0 \longrightarrow (\Sha^{T_K}_S)^{\vee} \longrightarrow \mathcal{T}_S^{T_K}/(\mathcal{T}_S^{T_K})^p [\mathcal{T}_S^{T_K}, G_K] \longrightarrow G_K/(G_K)_2.$$ By local class field theory, there exists an epimorphism
{\footnotesize \begin{align*}
     {\prod_{\p \not\in S \cup T_K} E_{\p}/E^p_\p \times \prod_{\p \in T_K} K_\p^{\times}/K_\p^{\times \, p} \cong \prod_{\p \not\in S \cup T_K} \mathcal{T}_\p/\mathcal{T}_\p^p[\mathcal{T}_\p, \mathcal{G}_\p] \times \prod_{\p \in T_K} \mathcal{G}_\p/(\mathcal{G}_\p)_2 \to \mathcal{T}_S^{T_K}/(\mathcal{T}_S^{T_K})^p[\mathcal{T}_S^{T_K}, G_K].}
\end{align*}}
Here $\mathcal{G}_\p$ denotes the Galois group of the maximal pro-$p$ extension of $K_\p$ and $\mathcal{T}_{\p}$ its inertia subgroup.  
We now argue as in the modified version of diagram~\cite[(11.7)]{Koch}, using the isomorphism $$V_S^{S \cup T_K}/K^{\times p} \cong \mathrm{ker}\big ( \prod_{\p \not\in S \cup T_K} E_{\p}/E^p_\p \times \prod_{\p \in T_K} K_\p^{\times}/K_\p^{\times \, p} \longrightarrow J_K/J_K^p K^{\times} \big ),$$ where~$J_K$ denotes the idèle group of~$K$.
\end{proof}

\subsection{Realization of linking data in the number field case}
\label{ssec:Realization of linking data}
In \cite[p. 178]{Labute} Labute asked in case $K=\Q$ --- in our notation --- whether for every linking tuple $\lambda=(\bX,\mu,\omega)$, there exists a set of tame primes $S$ such that  one has $\mu=\mu_S$ for $\lambda_S=(\bX_S,\mu_S,\omega_S)$ under a suitable identification of $S$ and $\bX$.

The following theorem answers his question positively for arbitrary number fields not containing a primitive~$p$-th root of unity. Throughout this subsection, we fix an arbitrary number field $K$ satisfying $\zeta_p \not\in K$ together with $T_K$. 

\begin{theo}\label{thm:Realization of extended linking data}
    Let~$\bX'$ be a finite set of cardinality $d_K'\coloneq r_K+d_p{\rm cl}(K)$ and $\bX$ be an arbitrary finite set of cardinality $d$. If $\lambda=(\bX',\bX,\varepsilon,\mu,\omega)$ is a linking tuple, then there exists a finite tame set of places~$S$ with~$|S|=d$, containing a Koch set, such that, after a suitable identification of $\bX'$ and $S'$ and $\bX$ and $S\setminus S'$, we have:
    \begin{align*}
        \varepsilon_S^{T_K}=\varepsilon,\quad (\mu_S^{T_K})=\mu,\quad v_p((\omega_S^{T_K})_z)\geq v_p(\omega_z),\quad \text{for any}\quad z\in \bX \amalg \bX'.
    \end{align*}
\end{theo}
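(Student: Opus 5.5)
We will construct $S$ one prime at a time with the Chebotarev density theorem, following Maire--Sankara~\cite{MaireSankara}. The key point is that all linking data $\mu(z,y)$ are Frobenius or inertia coordinates in governing-type fields, and these are determined by Frobenius conditions on primes chosen later.

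\textbf{Step 1: the Koch set.} First choose the Koch set $S'=\{\q_{x'}\}_{x'\in\bX'}$ as in Lemma~\ref{lem:Existence of Koch Tuples}. We require each $\q_{x'}$ to split completely in $K(\zeta_{p^{m}})$ with $p^m$ large enough that $v_p(N\q_{x'}-1)\geq v_p(\omega_{x'})$. This is compatible, because $\Gov_\emptyset\cap K(\zeta_{p^m})=K_1$ up to the $K_1$-part, and Kummer theory gives linear disjointness of the $p$-Kummer extension from the cyclotomic tower over $K_1$. Since $S'$ is Koch, $G_{K,S'}^{T_K}=1$.

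\textbf{Step 2: the primes of $\bX$ in order.} Next add $x_1,\dots,x_d\in\bX$ one at a time, choosing $\q_{x_k}$ with $N\q_{x_k}\equiv1\pmod{p^m}$ and with prescribed Frobenius in a compositum $L_k$. Writing $S_{k-1}=S'\cup\{\q_{x_1},\dots,\q_{x_{k-1}}\}$, the field $L_k$ is the compositum of three pieces.
\begin{enumerate}
\item The elementary abelian extension $(K_{S_{k-1}}^{T_K})^{\rm el,ab}$, which has basis indexed by $x_1,\dots,x_{k-1}$ by Lemma~\ref{lem:Unramified exact sequence} and the Koch property. The Frobenius of $\q_{x_k}$ there is required to have coordinates $\mu(x_k,x_j)$ for $j<k$.
\item The Kummer extension $K_1(\sqrt[p]{a_j})$ for suitable $a_j$, which controls the reverse coefficients $\mu(x_j,x_k)$ for $j<k$, and likewise $\mu(x',x_k)$.
\item The cyclotomic field $K(\zeta_{p^m})$.
\end{enumerate}
The reverse coefficients are handled by the duality underlying Lemma~\ref{lem:Governing field extension generated by Frobenius}. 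The $x_k$-coordinate of $\varphi_{x_j}$, or of $\tau_{x'}$ and $\varphi_{x'}$, is the coefficient of the new $\Z/p$-extension ramified at $\q_{x_k}$. By Kummer theory over $K_1$ and class field theory, this coefficient is read off from whether a fixed element of $V^{T_K}_{S_{k-1}\setminus\{\cdot\}}$ is a $p$-th power locally at $\q_{x_k}$. Equivalently, it is determined by the Frobenius of $\q_{x_k}$ in $\Gov^{T_K}_{S_{k-1}\setminus\{x_j\}}$, a reciprocity phenomenon analogous to $\ell_{ij}$ versus $\ell_{ji}$ in Labute's $K=\Q$ case.

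\textbf{Step 3: the main obstacle, independence of the conditions.} The hardest step is to check that the conditions imposed in Step 2 are mutually compatible. Concretely, one must show that the $p$-extensions involved (the ray-class part and the Kummer part) are linearly disjoint over $K_1$, and disjoint from $K(\zeta_{p^m})$. Then Chebotarev supplies a prime with the arbitrarily prescribed joint Frobenius. The tool I would try first is the usual argument that, because $\zeta_p\notin K$, the group $\Gal(K_1/K)$ acts on a class-field $p$-extension trivially but on a Kummer extension through the cyclotomic character. The two pieces are therefore disjoint as Galois modules, as in \cite[Prop.~2.3 and Lemma~1.3]{MaireSankara}. One must also check that adding $\q_{x_k}$ does not alter the previously fixed values $\mu(z,y)$ for $z,y\in S_{k-1}$, and that the Koch property persists. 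The coordinates are taken modulo the relevant subgroups, $F_2$ and inertia, and the new generator is dual to the new prime, so the old values are preserved; this should follow from Lemma~\ref{lem:Unramified exact sequence}.

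\textbf{Conclusion.} The support $\varepsilon(x',x)$, i.e.\ whether $\tau_{x'}$ has nonzero $x$-coordinate, is likewise a Frobenius-type condition imposed at step $x$, so $\varepsilon_S^{T_K}=\varepsilon$. After $d$ steps we obtain $\mu_S^{T_K}=\mu$, $\varepsilon_S^{T_K}=\varepsilon$, and the valuation bounds on $\omega$.
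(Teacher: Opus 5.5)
Your overall plan matches the paper's: add one prime at a time in the style of Maire--Sankara, and apply Chebotarev in the compositum of $(K_S^{T_K})^{\rm el,ab}$, a Kummer (governing) field over $K_1$, and $K(\zeta_{p^m})$. The data you prescribe at each step are exactly conditions (1)--(4) of the paper's technical lemma. Your Step~1, forcing $N\q_{x'}\equiv 1\pmod{p^m}$ on the Koch primes, is a correct strengthening of Lemma~\ref{lem:Existence of Koch Tuples}. It is needed for the bound on $v_p(\omega_{x'})$, $x'\in\bX'$. However, the step that carries the real content is not proved, and the mechanism you name for it does not work.

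\textbf{Where the proposal fails.} The reverse coefficients $\mu(x_j,x_k)$ and $\mu(x',x_k)$ are values of the new character $\chi_{x_k}$ at Frobenii of \emph{old} primes. You say they are read off from the Frobenius of $\q_{x_k}$ in $\Gov^{T_K}_{S_{k-1}\setminus\{x_j\}}$, i.e.\ from whether an element of $V^{T_K}_{S_{k-1}\setminus\{\cdot\}}$ is a local $p$-th power at $\q_{x_k}$.
\begin{itemize}
\item For $x_j\in\bX$ this carries no information. Since $S'\subseteq S_{k-1}\setminus\{x_j\}$ and $\RB^{T_K}_{S'}=1$, the canonical surjection gives $\RB^{T_K}_{S_{k-1}\setminus\{x_j\}}=1$. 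So that field is just $K_1$ and the condition is empty.
\item For $x'\in S'$, the line $V^{T_K}_{S'\setminus\{x'\}}$ only decides whether $\chi_{x_k}$ ramifies at $x'$, i.e.\ $\varepsilon(x',x_k)$. It does not control the value $\chi_{x_k}(\mathrm{Frob}_{x'})$.
\end{itemize}

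\textbf{What the paper does instead.} To prescribe Frobenius values at old primes, those primes must sit in the superscript (splitting) slot.
\begin{itemize}
\item Set $\Theta=T_K\cup(S\setminus S_r)\cup\{t_0\}$ and prescribe unramified local characters $\chi=\sum_{t\in\Theta}\chi_t$. These are $0$ on $T_K$, $\ell_\q$ at $\q\in S\setminus S_r$, and $1$ at an auxiliary prime $t_0$, which forces $\chi\neq 0$.
\item Choose $\q'$ whose Frobenius in $\Gov^\Theta$ is $\alpha=\varphi^\Theta(\chi)\prod_{\q\in S_r}(\q_1,\Gov^\Theta/K_1)$.
\item Lemma~\ref{lem:Governing field extension generated by Frobenius} then gives $\varphi^\Theta_{S_r\cup\{\q'\}}(\chi)=0$.
\item The exact sequence of Lemma~\ref{lem:Unramified exact sequence} lifts $\chi$ to a global $\widehat\chi$. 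It is ramified only in $S_r\cup\{\q'\}$, split at $T_K$, and has the prescribed values at all old primes at once, under a single normalization of the generator of $\Gal(L/K)$.
\item One still has to show $\widehat\chi$ is ramified at \emph{exactly} $S_r\cup\{\q'\}$, which is what yields $\varepsilon$. The paper uses Gras--Munnier together with $d(G^{T_K}_{S'\cup\{\q'\}})=1$. Your statement that this is ``likewise a Frobenius-type condition'' is an assertion, not an argument.
\end{itemize}

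\textbf{Misplaced emphasis.} The step you flag as the main obstacle, linear disjointness, is routine. $\Gov^\Theta\cap K_m=K_1$ follows from the eigenspace argument since $\zeta_p\notin K$. Also, $(K_S^{T_K})^{\rm el,ab}$ meets $K_m$ and $\Gov^\Theta$ only in $K$. The missing piece is the lifting argument above.
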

There is a more general form of the above theorem, for which we need the notation of extension of linking tuples. Let $\lambda=(\bX',\bX,\varepsilon,\mu,\omega)$ and $\widetilde{\lambda}=(\bX',\widetilde{\bX},\widetilde{\epsilon},\widetilde{\mu},\widetilde{\omega})$ be linking tuples. We say that $\widetilde{\lambda}$ extends $\lambda$ if $\bX\subseteq \swidetilde{\bX}$ and the functions $\varepsilon$, $\swidetilde{\varepsilon}$ and $\mu$, $\swidetilde{\mu}$ are equal, wherever both of them are defined. Note that $D_\varepsilon\subseteq D_{\swidetilde{\varepsilon}}$ under these assumptions.
\begin{theo}\label{extension}
    Let $K$ be a number field and fix~$T_K$. We assume that $S$ admits a Koch set~$S'$. Let $\lambda_S^{T_K}$ be the linking tuple associated to it. Consider~$\widetilde{\lambda}=(\bX',\widetilde{\bX},\widetilde{\varepsilon},\widetilde{\mu},\widetilde{\omega})$ a linking tuple extending $\lambda_S^{T_K}$. Then there exists a set~$\swidetilde{S}$ containing $S$ in bijection with $\swidetilde{\bX}\cup \bX'$ such that
    \begin{align*}
        \varepsilon^{T_K}_{\widetilde{S}}=\swidetilde{\varepsilon},\quad \mu_{\widetilde{S}}^{T_K}=\swidetilde{\mu},\quad (\omega_{\widetilde{S}}^{T})_x=\widetilde{\omega}_x\quad \text{for }x\in \bX\amalg \bX'\\
        \text{and}\qquad v_p((\omega_{\widetilde{S}}^T)_x)\geq v_p(\widetilde{\omega}_x)\quad \text{for }x\in \widetilde{\bX}\setminus \bX.
    \end{align*}
\end{theo}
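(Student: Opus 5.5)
Induction on $|\swidetilde{\bX}\setminus\bX|$ reduces the statement to adding a single new element $z\in\swidetilde{\bX}\setminus\bX$. The inductive step is legitimate because the set $S\cup\{\q_z\}$ again admits the same Koch set $S'$, and its linking tuple extends $\lambda_S^{T_K}$. So let $S$ be given with its Koch set $S'$. We must find one tame prime $\q=\q_z\notin S\cup T_K$ such that five conditions hold:
\begin{itemize}
\item[(i)] $\mu_{S\cup\{\q\}}(\q,y)=\swidetilde\mu(z,y)$ for $y\in S\setminus S'$;
\item[(ii)] $\mu_{S\cup\{\q\}}(x,z)=\swidetilde\mu(x,z)$ for $x\in\bX$;
\item[(iii)] for $x'\in\bX'$, both $\varepsilon_{S\cup\{\q\}}(x',z)=\swidetilde\varepsilon(x',z)$ and the corresponding $\mu$-value of $\tau_{x'}$ or $\varphi_{x'}$ at $z$ equal $\swidetilde\mu(x',z)$;
\item[(iv)] the old values $\mu_S$ are unchanged;
\item[(v)] $v_p(N\q-1)\ge v_p(\swidetilde\omega_z)$.
\end{itemize}

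\textbf{Old values and the new Frobenius.} For (iv), Lemma~\ref{lem:Unramified exact sequence} together with $\RB_{S}^{T_K}=1$ (a quotient of $\RB_{S'}^{T_K}=1$) gives $(K_{S\cup\{\q\}}^{T_K})^{\rm el,ab}\simeq \bigoplus_{x\in\bX\cup\{z\}}\F_p x$. The coordinates $\mu(\cdot,y)$ with $y\neq z$ are computed in the quotient $(K_S^{T_K})^{\rm el,ab}$, so they do not change. Condition (i) prescribes the Frobenius of $\q$ in $L\coloneq (K_S^{T_K})^{\rm el,ab}$. Condition (v) amounts to $\q$ splitting completely in $K_n=K(\zeta_{p^n})$ with $n=v_p(\swidetilde\omega_z)$, and the constraint $\q\notin S\cup T_K$ is automatic.

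\textbf{Conditions on the new coordinate via Kummer theory.} Conditions (ii) and (iii) concern the $z$-coordinate of $\varphi_x$ and of $\tau_{x'}$ or $\varphi_{x'}$, that is, their image in the cyclic degree-$p$ extension $M_\q$ of $K$ ramified at $\q$ (and tamely at primes of $S$), split at $T_K$. Following Maire--Sankara, I will translate these into Frobenius conditions on $\q$ in a governing field. Let
\[
V\coloneq V_{\emptyset}^{S\cup T_K}
\]
be the $S\cup T_K$-units modulo $p$-th powers, and let $F\coloneq K_1(\sqrt[p]{V})$. By Kummer duality with reciprocity (Gras-type reflection, or Lemma~\ref{lem:Governing field extension generated by Frobenius} applied to $\Gov$-fields), the local behaviour of the new character $\chi_\q$ at a prime $x\in S$ is encoded by the Frobenius of $\q$ in $F$ acting on a suitable Kummer generator attached to $x$. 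For $x\in\bX$ this generator is an $S\cup T_K$-unit $a_x$ with support controlled at $x$. Hence the values $\chi_\q(\varphi_x)$ and $\chi_\q(\tau_{x'})$ are determined by $(\q,F/K_1)$ up to the fixed normalisation of the inertia generator of $\q$. Prescribing them amounts to prescribing $(\q,F/K_1)$ in a coset of a subgroup. The $\F_p$-independence of the Kummer generators needed to realise all prescribed values should follow from $\RB_{S}^{T_K}=1$ and $G_{K,S'}^{T_K}=1$, which make $S'$ kill all global obstructions. The $\varepsilon$-condition is a condition of the same type, namely whether $\chi_\q(\tau_{x'})$ vanishes or not.

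\textbf{Chebotarev and the main obstacle.} Finally I choose $\q$ by the Chebotarev density theorem in the compositum $L\cdot F\cdot K_n$. This requires checking that $L$, $F$ and $K_n$ are linearly disjoint enough over $K$, respectively over $K_1$, so that the three Frobenius prescriptions are simultaneously achievable. I expect this disjointness, together with the precise reciprocity identity expressing $\chi_\q(\varphi_x)$ through $(\q, K_1(\sqrt[p]{a_x})/K_1)$, to be the main obstacle. The identity needs $\zeta_p\notin K$ to descend from $K_1$ to $K$ via the eigenspace of $\Gal(K_1/K)$ acting by the cyclotomic character, and that eigenspace decomposition is also what gives disjointness from the abelian-over-$K$ field $L$. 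I would first try to prove disjointness by comparing $\Gal(K_1/K)$-actions: the group acts trivially on $\Gal(L K_1/K_1)$ but via the cyclotomic character on $\Gal(F/K_1)$, and the two are therefore disjoint when $p$ is odd and $\zeta_p\notin K$.
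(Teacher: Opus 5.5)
\textbf{Gap.} Your reduction to one new prime, the bookkeeping in (i), (iv) and (v), and the Chebotarev choice in the compositum of $(K_S^{T_K})^{\rm el,ab}$, a Kummer field over $K_1$ and $K_n$ all follow the same architecture as the paper. The gap is the central step. You need to show that the behaviour of the new coordinate character at the old primes is forced by a Frobenius condition on $\q$, and that all these conditions can be imposed at once. That behaviour is: ramified exactly at the $x'$ with $\widetilde\varepsilon(x',z)=1$, with prescribed Frobenius at each $x\in\bX$ and at each unramified $x'$. You only assert this (``should follow'', ``I expect \dots\ to be the main obstacle'') and never formulate the identity or the independence statement, yet this is essentially the whole content of the theorem.

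You also never pin down the field. Coordinates are taken with respect to the inertia generators, so the $z$-coordinate is the character killing $\tau_x$ for every $x\in\bX$. Hence $M_\q$ must be \emph{the} degree-$p$ extension unramified outside $S'\cup\{\q\}$ and split at $T_K$, not merely one ramified at $\q$ and tamely at $S$. It is unique because $d(G_{K,S'\cup\{\q\}}^{T_K})=1$, and it is ramified at $\q$ because $G_{K,S'}^{T_K}=1$. This is exactly where the Koch hypothesis must enter, and your sketch never uses it concretely.

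Two smaller points. When $\widetilde\varepsilon(x',z)=1$, no value of $\chi_\q(\tau_{x'})$ is prescribed, since $(x',z)\notin D_{\widetilde\varepsilon}$. And your eigenspace argument does not give $(K_S^{T_K})^{\rm el,ab}\cap K_n=K$; this holds instead because the intersection is unramified everywhere and split at $T_K$.

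\textbf{How the paper closes it.} It uses the Maire--Sankara lemma. Put $S_r=\{x':\widetilde\varepsilon(x',z)=1\}$ and $\Theta=T_K\cup(S\setminus S_r)\cup\{t_0\}$. Encode the wanted Frobenius values as $\chi\in\bigoplus_{t\in\Theta}H^1(\GG_t^{\rm ur})$, zero on $T_K$ and equal to $1$ at an auxiliary $t_0$, which forces $\chi\neq 0$.
\begin{itemize}
\item By Lemma~\ref{lem:Unramified exact sequence}, $\chi$ lifts to $H^1(G_{K,S_r\cup\{\q\}})$ if and only if its image in $\RB^\Theta_{S_r\cup\{\q\}}$ vanishes.
\item By Lemma~\ref{lem:Governing field extension generated by Frobenius}, this is guaranteed by requiring that $\q$ have Frobenius $\alpha=\varphi^\Theta(\chi)\prod_{v\in S_r}(v_1,\Gov^\Theta/K_1)$ in $\Gov^\Theta$, since $\alpha$ maps to $\varphi^\Theta_{S_r}(\chi)$ in $\RB^\Theta_{S_r}$.
\item The extra product makes $\alpha|_{\Gov^{T_K}}=\prod_{v\in S_r}(v_1,\Gov^{T_K}/K_1)$. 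Gras--Munnier then gives a degree-$p$ extension ramified exactly at $S_r\cup\{\q\}$ and split at $T_K$, and by uniqueness this is the field of the lift, i.e.\ $M_\q$.
\end{itemize}

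\textbf{Completing your own route.} Your explicit Kummer route also works and needs no reciprocity law. Since $G_{K,S'}^{T_K}=1$ and $|S'|=d_K'$, reduction $\mathcal{O}_{K,T_K}^\times/p\to\prod_{x'\in S'}(\mathcal{O}_K/\p_{x'})^\times/p$ is an isomorphism. By class field theory, $M_\q$ is then ramified at $x'$ if and only if the $T_K$-unit dual to $x'$ is not a $p$-th power modulo $\q$. For an unramified $y$, the Frobenius of $M_\q$ at $y$ is, up to a common nonzero scalar absorbed by the choice of $\tau_z$, the $p$-th power residue symbol modulo $\q$ of an element $a_y$ defined as follows. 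Take $(a_y)=\p_y^h\mathfrak b$ with $p\nmid h$ and $\mathfrak b$ supported on $T_K$; this exists because ${\rm cl}(K)/\langle T_K\rangle$ has order prime to $p$. Then correct $a_y$ by a $T_K$-unit so that it is a $p$-th power at $S'\setminus\{y\}$. These units and the $a_y$ are independent in $K^\times/K^{\times p}$ and lie in $V_\emptyset^{S\cup T_K}$, so all the conditions become a single Frobenius condition in your $F$.

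One of these two arguments has to be written out. As it stands, the proposal postpones exactly the difficulty the theorem is about.
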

Note that Theorem~\ref{thm:Realization of extended linking data} follows from Lemma~\ref{lem:Existence of Koch Tuples} together with Theorem~\ref{extension}. Furthermore, Theorem~\ref{extension} follows from an inductive application of the following technical lemma, which is based on a method developed by Maire and Sankara in~\cite{MaireSankara}:
\begin{lemm}
    Let $p$ be an odd prime and $K$ a number field with $\zeta_p\not\in K$. Fix~$T_K$. Assume that $S$ admits a Koch set $S'$. Given the following data: a subset $S_r\subseteq S'$; for each $\q\in S\setminus S_r$ an $\ell_\q\in \F_p$, an element~$\tau$ in $(G_S^{T_K})^{\rm el,ab}$, and an integer $m\geq 1$. 

    Then there exists a place $\q'$ of $K$ and a cyclic extension $L/K$ of degree $p$ such that
    \begin{enumerate}
        \item \label{it:Norm condition} $v_p(N(\q')-1)\geq m$;
        \item \label{it:Frobenius condition} $(\q',(K_S^T)^{\rm el,ab}/K)=\tau$;
        \item \label{it:Ramification condition} $L/K$ is ramified exactly at $S_r\cup \{\q'\}$ and totally split in $T_K$;
        \item \label{it:linking condition} there is a generator $\sigma$ of $\Gal(L/K)$ such that $(\q,L/K)=\sigma^{\ell_\q}$ for each $\q\in S\setminus S_r$.
    \end{enumerate}
\end{lemm}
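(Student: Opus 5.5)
We construct $L$ and $\q'$ at the same time via Kummer theory over $K_1=K(\zeta_p)$ and Chebotarev in a single compositum, in the spirit of Maire--Sankara. Set $\Delta=\Gal(K_1/K)$ and let $\chi$ be the cyclotomic character. Since $\zeta_p\notin K$, $p\nmid[K_1:K]$, and cyclic degree-$p$ extensions of $K$ correspond to $\chi$-eigenlines in $K_1^\times/K_1^{\times p}$.

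\textbf{Step 1: extensions with prescribed ramification.} Because $S$ admits the Koch set $S'$, we have $\RB_{S'}^{T_K}=1$ and $G_{K,S'}^{T_K}=1$. By Proposition~\ref{prop:rank of G_S^T}, for any $\q'\notin S\cup T_K$ the group $H^1(G_{K,S'\cup\{\q'\}}^{T_K})$ has dimension at most $1$. Its ramification map to $\bigoplus_{\q\in S'\cup\{\q'\}}H^1(\mathcal{T}_\q)^{\mathcal{G}_\q}$ is injective, because $G_{K,\emptyset}^{T_K}=1$.

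This suggests the following. For each $\q\in S'$, pick by Chebotarev an auxiliary tame prime $\q'_\q$ such that the elementary abelian extension $K_{S'\cup\{\q'_\q\}}^{T_K}$ is nontrivial and ramified at $\q$ and at $\q'_\q$. Rather than doing this prime by prime, I would argue globally. Consider the $\chi$-part $W$ of the group of $a\in K_1^\times$ with $p\mid\nu_\mathfrak{t}(a)$ for $\mathfrak{t}\notin S'\cup\{\q'\}$ (primes above them) that are $p$-th powers locally above $T_K$. The Koch condition makes $W$ a controlled, essentially fixed space once $\q'$ is prescribed. Taking $\q'$ to split completely in an appropriate governing field, i.e. a Frobenius condition in $\Gov^{T_K}$-type fields over $K_1$, forces the existence of a degree-$p$ extension $L/K$ ramified exactly at $S_r\cup\{\q'\}$ and totally split at $T_K$. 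This uses the exact sequence of Lemma~\ref{lem:Unramified exact sequence} and Lemma~\ref{lem:Governing field extension generated by Frobenius}, applied to $S'\setminus S_r$.

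Concretely, the inertia generators at $S'\cup\{\q'\}$ satisfy a single linear relation in $(G_{S'\cup\{\q'\}}^{T_K})^{\rm el,ab}$. The coefficients of this relation are read off from the Frobenius of $\q'$ in $\Gov_{S'}^{T_K}$-type fields, which are the linking numbers $\mu(x',\cdot)$. Choosing that Frobenius so the coefficients vanish exactly off $S_r$ gives the ramification condition~\eqref{it:Ramification condition}.

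\textbf{Step 2: Frobenius conditions.} For the remaining conditions, fix a reference extension and let $\q'$ vary. The Frobenii $(\q,L/K)$ for $\q\in S\setminus S_r$ are determined by the Kummer generator of $L$ being a prescribed $p$-th power residue modulo $\q$. By reciprocity in $K_1$, these translate into conditions on the Frobenius of $\q'$ in $M_1=K_1\big(\sqrt[p]{V}\big)$, where $V$ is built from the $S$-units and the primes of $S\setminus S_r$.

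All the conditions then become a single Chebotarev condition on $\q'$ in the compositum
\[
K_{p^m}\cdot (K_S^{T_K})^{\rm el,ab}\cdot M_1\cdot \Gov^{T_K}_{\ast}.
\]
Condition~\eqref{it:Norm condition} says $\q'$ splits in $K_{p^m}/K$. Condition~\eqref{it:Frobenius condition} prescribes the Frobenius $\tau$. Condition~\eqref{it:linking condition} prescribes the Frobenius in $M_1$.

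\textbf{Main obstacle: linear disjointness.} The key difficulty is showing that these fields are linearly disjoint enough that the conditions are compatible. I would first use eigenspace decomposition under $\Delta$. The Kummer extensions relevant to~\eqref{it:linking condition} live in the $\chi^{-1}$-twisted parts. By contrast, $(K_S^{T_K})^{\rm el,ab}K_1$ lives in the trivial-character part, since it descends to $K$. The two therefore intersect only in $K_1$.

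Disjointness from $K_{p^m}$ follows because $K_{p^m}/K_1$ has ramification above $p$ only. The prescribed Kummer extensions are, after the Koch condition, unramified above $p$, or one can pass to generators that are local units at $p$.

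Finally, one must check that the values $\ell_\q$ can be arbitrary, i.e. that the map from the relevant Galois group to $\F_p^{S\setminus S_r}$ is surjective. This is where $\RB_{S'}^{T_K}=1$ enters: it ensures that no global Kummer element imposes a relation among the residue symbols at $S\setminus S_r$. I expect this surjectivity, a Gras--Munnier style governing-field argument, to be the delicate point.
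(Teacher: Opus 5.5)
Your overall architecture matches the paper's: a single Chebotarev condition on $\q'$ in a compositum of $K_m$, $(K_S^{T_K})^{\rm el,ab}$ and a governing Kummer field, together with a Gras--Munnier argument and the Koch condition for the exact ramification. However, the heart of the lemma, condition (iv), is not proved.

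\textbf{What is missing in Step~2.} You say the Frobenii $(\q,L/K)$, $\q\in S\setminus S_r$, ``translate by reciprocity'' into a condition on the Frobenius of $\q'$ in $M_1$, but you never say which condition. There is also no fixed ``reference extension'', since $L$ changes with $\q'$. And you explicitly leave open that arbitrary $\ell_\q$ are attainable.

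You frame the difficulty as linear disjointness, but that is the wrong issue. The field $\Gov^{T_K}$ controlling (iii) and your $M_1$ are Kummer extensions of $K_1$ in the same $\Gal(K_1/K)$-eigenspace with overlapping radicals; in general both contain $K_1(\sqrt[p]{\O_K^\times})$. So they cannot be prescribed independently. What must be shown is that the Frobenius demanded by (iii) and the one demanded by (iv) are \emph{compatible}, and you do not address this.

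Two smaller errors in Step~1. The right condition is not that $\q'$ splits completely in a governing field, but that $(\q'_1,\Gov^{T_K}/K_1)=\prod_{\q\in S_r}(\q_1,\Gov^{T_K}/K_1)^{c_\q}$ with all $c_\q\neq 0$. Also $\Gov_{S'}^{T_K}=K_1$, so that field carries no information.

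\textbf{The missing idea: prescribe the local data first, then lift.} This uses the exact sequence of Lemma~\ref{lem:Unramified exact sequence}.
\begin{enumerate}
\item Put $\Theta=T_K\cup(S\setminus S_r)\cup\{t_0\}$. Take unramified local characters $\chi_t$ with value $\ell_\q$ at $\q\in S\setminus S_r$, value $0$ on $T_K$, and value $1$ at an auxiliary $t_0$ (to force nontriviality). Set $\chi=\sum\chi_t$.
\item The obstruction to lifting $\chi$ to a character unramified outside $S_r\cup\{\q'\}$ is $\varphi^\Theta_{S_r\cup\{\q'\}}(\chi)$. By Lemma~\ref{lem:Governing field extension generated by Frobenius}, it vanishes once the Frobenius of $\q'$ in $\Gov^\Theta$ equals $\alpha=\varphi^\Theta(\chi)\prod_{\q\in S_r}(\q_1,\Gov^\Theta/K_1)$.
\item Every element of $\Gal(\Gov^\Theta/K_1)$ is a Frobenius, so arbitrary $\ell_\q$ cost nothing.
\end{enumerate}
Contrary to your guess, $\RB_{S'}^{T_K}=1$ plays no role in this surjectivity. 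It is used only because $d(G^{T_K}_{K,S'\cup\{\q'\}})=1$: this identifies the lifted $L$ with the extension Gras--Munnier produces, giving exact ramification, and forces ramification at $\q'$.

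Compatibility with (iii) is then automatic. Because $\chi|_{T_K}=0$, $\alpha$ restricts on $\Gov^{T_K}\subseteq\Gov^\Theta$ to $\prod_{\q\in S_r}(\q_1,\Gov^{T_K}/K_1)$, which is precisely the Gras--Munnier condition.

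\textbf{Your disjointness argument for $K_m$ is wrong.} You argue $K_m\cap\Gov^\Theta=K_1$ via ramification above $p$. But Kummer extensions by units are typically ramified above $p$, and taking local units at $p$ does not prevent this. The correct argument is the eigenspace one you already used for $(K_S^{T_K})^{\rm el,ab}$. The group $\Gal(K_1/K)$ acts trivially on $\Gal(K_m/K_1)$, since $\Gal(K_m/K)$ is abelian. It acts via the nontrivial cyclotomic character on $\Gal(\Gov^\Theta/K_1)$.
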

\begin{proof}
    We choose a further place $t_0$ and define $\Theta\coloneq T_K\cup (S\setminus S_r)\cup \{t_0\}$. For a place $v$ we denote by $\GG_v^{\rm ur}\cong \Z_p$ the Galois group of the maximal unramified pro-$p$ extension of $K_v$. We also choose Frobenius lift $\varphi_v$ that generates $\GG_v^{\rm ur}$. Now for $t\in \Theta$ we define  $\chi_t\in H^1(\GG_t^{\rm ur})$ by
    \begin{align*}
        \quad \chi_t(\varphi_t)=\begin{cases}
            0&\text{if }t\in T_K,\\
            \ell_\q&\text{if }t=\q\in S\setminus S_r,\text{ and}\\
            1&\text{if }t=t_0
        \end{cases}
    \end{align*}
    and consider $\chi\coloneq \sum_{t\in \Theta}\chi_t\in \bigoplus_{t\in \Theta}H^1(\GG_t^{\rm ur})$, which is non-zero since $\chi_{t_0}\neq 0$. Let 
    \begin{align*}
        \alpha\coloneq \varphi^\Theta(\chi)\prod_{\q\in S_r}(\q_1,\Gov^{\Theta}/K_1)\in \RB^\Theta\subseteq \Gal(\Gov^{\Theta}/K),
    \end{align*}
    where $\q_1$ denotes a fixed prime of $K_1$ above $\q$ and $\varphi^{\Theta}$ is the map appearing in Lemma \ref{lem:Unramified exact sequence}.
    Note that by Lemma~\ref{lem:Governing field extension generated by Frobenius} the image of $\alpha$ in $\RB_{S_r}^\Theta$ is equal to $\varphi^\Theta_{S_r}(\chi)$. 
    
    Now, let $F$ be the compositum of the fields $\Gov^\Theta$, $K_m=K(\zeta_{p^m})$, and $(K_S^T)^{\rm el,ab}$. Note that by $\zeta_p\not\in K$ we have the following intersections:
    \begin{align*}
        K_1=\Gov^{\Theta}\cap K_m,\quad K=K_m\cap (K_S^{T_K})^{\rm el,ab},\quad \text{and}\quad K=\Gov^\Theta\cap (K_S^{T_K})^{\rm el,ab}.
    \end{align*}
    Clearly $F/K$ is a Galois extension. Due to the observation about the intersection of the fields there exists a $\beta\in \Gal(F/K)$ with the following properties:
    \begin{align*}
        {\rm (i)\quad }\beta\left|_{K_m}\right.=1 \quad  {\rm (ii)\quad }\beta\left|_{(K_S^{T_K})^{\rm el,ab}}\right.=\tau\quad 
        {\rm (iii)\quad }\beta\left|_{\Gov^\Theta}\right.=\alpha
    \end{align*}
    Now, we choose a place $\mathfrak Q$ of $F$, which does not divide any place in $\Theta$ such that $(\mathfrak Q, F/K)=\beta$. We define $\q'$ to be the place of $K$ below $\mathfrak Q$. We now show that conditions \ref{it:Norm condition} to \ref{it:linking condition} are satisfied.

    Condition~\ref{it:Norm condition} follows directly from the fact that $\q$ is completely split in $K_m/K$ by (i) and Condition~\ref{it:Frobenius condition} is a direct consequence of (ii). We next construct the extension $L/K$. For that consider the following commutative diagram with exact rows
    \begin{equation*}
        \begin{tikzcd}[column sep=large]
            H^1(G_{K,S_r})\arrow[d,hook]\arrow[r,"\psi^\Theta_{S_r}"] &\bigoplus_{v\in \Theta}H^1(G_v^{\rm ur})\arrow[r,"\varphi^\Theta_{S_r}"]\arrow[d,equal] &\RB_{S_r}^\Theta\arrow[d,"{\phi_{\frak q'}}",two heads]\\
            H^1(G_{S_r\cup \{\frak q'\}})\arrow[r,"\psi_{S_r\cup \{\frak q'\}}^\Theta"] &\bigoplus_{v\in \Theta}H^1(G_v^{\rm ur})\arrow[r,"\varphi_{S_r\cup \{\frak q'\}}^\Theta"] &\RB_{S_r\cup \{\frak q'\}}^\Theta
        \end{tikzcd}
    \end{equation*}
    Now, by construction of $\q'$ and Lemma~\ref{lem:Governing field extension generated by Frobenius}, we conclude that $\phi_{\q'}(\alpha)=1$. Thus there exists a character $0\neq \widehat{\chi}\in H^1(G_{S_r\cup \{\q'\}})$ such that $\psi^\Theta_{S_r\cup \{\q'\}}(\widehat{\chi})=\chi$. We now let $L$ be the field corresponding to $\ker \widehat{\chi}$. Since $\chi$ and hence $\widehat{\chi}$ is not trivial, we see that $\Gal(L/K)\cong \F_p$. Let $\sigma$ be the generator of $\Gal(L/K)$ mapped to $1$ in $\F_p$ under~$\widehat{\chi}$.

    We immediately see, that for each $\q\in S \setminus S_r$ we have
    \begin{align*}
        \widehat{\chi}\big((\q,L/K)\big)=\chi_\q(\varphi_\q)=\ell_\q.
    \end{align*}
    This shows \ref{it:linking condition}. By a similar argument, we get that each place of $T_K$ is split in $L/K$. Furthermore, since~$L\subseteq K_{S_r\cup \{\q'\}}$, the set of ramified primes of~$L/K$ is contained in~$S_r\cup \{\q'\}$. Moreover,~$L/K$ is ramified at~$\q'$ because~$G_{S'}^T=1$.

    It remains to prove \ref{it:Ramification condition}. By the condition $\chi_t(\varphi_t)=0$ for $t\in T_K$, we have
\begin{equation*}
\alpha|_{\Gov^T}=(\mathfrak Q \cap K_1,\Gov^T/K_1)=\prod_{\q\in S_r} (\q_1,\Gov^T/K_1).
\end{equation*}
Hence, by the Gras-Munnier theorem~\cite{GrasMunnier}, there exists a degree $p$ cyclic extension of $K$ that is precisely ramified at $S_r \cup \{\q'\}$. This extension coincides with $L$, because otherwise we have $d(G_{S' \cup \{\q'\}}^T) \geq d(G^T_{S_r \cup \{\q'\}}) \geq 2$. Since $S'$ is a Koch set, Proposition \ref{prop:rank of G_S^T} implies $d(G_{S' \cup \{\q'\}}^T)=1$, a contradiction.
\end{proof}
\begin{rema}
    If one assumes that $K\cap \Q(\zeta_{p^m})=\Q$, e.g., if $K/\Q$ is unramified at $p$, then it is also possible to prescribe the precise value of $N(\q')-1\pmod{p^m}$ and thus its valuation. The reason for this is that $K(\zeta_{p^m})/K(\zeta_p)\cong 1+p\Z/p^m\subseteq (\Z/p^m)^\times$ and one can choose $\beta|_{K_m}\in \Gal(K_m/K_1)$ in the proof arbitrarily.
    This statement is used by the first author in \cite{Feuerpfeil2026Bockstein}.
\end{rema}

\section{Mildness of $G_{K,S}^T$}
\label{sec:Mildness}
We recall that~$K$ is a number field not containing~$\zeta_p$. We fix~$T_K$ a minimal generating set of ${\rm cl}(K)/p$. We prove Theorem~\ref{first theorem}.

\begin{theo}\label{thm:Mildness for arbitrary fields}
Let~$S_0$ be a finite set of tame primes such that~$|S_0|\geq 2$ and disjoint from~$T_K$. We can find a set $S_1$ of tame primes disjoint from~$T_K$, such that~$|S_1|=2(1+d_K')$ and~$G_{K,S_1 \cup S_0}^{T_K}$ has cohomological dimension $2$ and deficiency~$d_K'$.
\end{theo}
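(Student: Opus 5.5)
We first fix the Koch set, then realize a suitable linking tuple, and then prove mildness for a well-chosen $(\bX,e)$-filtration. By Lemma~\ref{lem:Existence of Koch Tuples}, choose a Koch set $S'$ disjoint from $S_0\cup T_K$ with $|S'|=d_K'$. Put $\bX'\coloneq S'$. The set $\bX$ will consist of $S_0$, of $d_K'$ further primes $\bX''$, and of two auxiliary primes $a,b$, so that $S_1=S'\cup\bX''\cup\{a,b\}$ has $2d_K'+2$ elements.

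Theorem~\ref{theo-Liu} then gives a Koch-type presentation of $G_{K,S_0\cup S_1}^{T_K}$:
\begin{itemize}
\item generators: the $\tau_x$ for $x\in\bX$, so $d=|S_0|+d_K'+2$;
\item relations: one for each $z\in\bX\amalg\bX'$, so $r=d+d_K'$.
\end{itemize}
The deficiency is therefore $r-d=d_K'$, which gives the last claim once we know the cohomological dimension is $2$.

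Next we prescribe the linking data through Theorem~\ref{extension}. The linking tuple $\lambda_{S_0\cup S'}^{T_K}$ is forced on us, since the values $\mu$ between primes of $S_0$ and from $S'$ to $S_0$ are arbitrary. The new values are ours to choose. We extend by one prime at a time. The goal is that, after discarding the uncontrolled data by a choice of weights, the initial forms of all relations are commutators $[X_u,X_v]$ along the edges of a bipartite graph, each vertex being used in a combinatorially free way.

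Concretely, give the vertices of $S_0$ a large weight $N$ and the new vertices $a,b,\bX''$ weight $1$. A relation $\tau_x^{\omega_x}[\tau_x,y_x]$ has initial form
\[
\sum_s \mu(x,s)\,[X_x,X_s],
\]
where only the terms of minimal total weight $e_x+e_s$ survive. The $p$-power term is negligible once $v_p(\omega_x)$ is large, and Theorem~\ref{extension} lets us make it large for the new primes.

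For $x\in S_0$ we require $\mu(x,a)\neq0$ (possible: this is the value at the new prime $a$, realized by the Frobenius condition). The initial form is then $[X_x,X_a]$, of weight $N+1$, whatever the original links inside $S_0$ are. For $a$ we take the relation with $\mu(a,b)\neq0$ and all other values zero, giving the initial form $[X_a,X_b]$. For $b$ we use $\mu(b,\cdot)$ to link $b$ to one chosen vertex, avoiding triangles.

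The relations indexed by $x'\in\bX'$ have $y_{x'}$ defined only modulo the support of $\tau_{x'}$. Here the $d_K'$ vertices $\bX''$ serve to absorb them: we choose $\varepsilon(x',\cdot)$ supported on $\bX''$ and link $x'$ to $b$ or $a$. We must then rewrite $\tau_{x'}$ in terms of the generators and check the resulting initial form.

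Once all initial forms are $[X_u,X_v]$ for the edges of a bipartite graph, with parts $S_0\cup\{b\}\cup\bX''$ versus $\{a\}$ plus whatever the $\bX'$-relations require, the group is an $(\bX,e)$-RAAG. Theorem~\ref{mildGammaRAAG} then gives mildness, hence cohomological dimension~$2$. Corollary~\ref{hilbseriescomb} is an alternative route if the leading monomials turn out to be combinatorially free.

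The main obstacle is the $r-d=d_K'$ extra relations coming from $\bX'$. With $\tau_{x'}$ a product of generators, these relations are not single commutators, and a bipartite graph has at most one edge per pair, so repeated edges would create dependent relations. To avoid this, I would make the relation of each $x'$ involve a distinct private pair. For example, take $\tau_{x'}\equiv\tau_{x''}$ for a dedicated $x''\in\bX''$ via $\varepsilon$, and $\mu(x',\cdot)$ pointing to $a$ or $b$. The $x''$-relation itself is then linked to the other of $a,b$. We would need to check that the edge multiset is simple and that the graph is bipartite, giving each $x''$ weight $1$ and checking minimality of weights. If simplicity fails, I would instead use weights $(1,2)$ on $(a,b)$ to separate the leading monomials and apply Corollary~\ref{hilbseriescomb} to the highest monomials under the order of the filtration.
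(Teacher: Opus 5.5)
\textbf{There is a genuine gap: the graph you build cannot be made bipartite with distinct edges.}

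Your framework is the paper's. You take a Koch set $S'$ disjoint from $S_0$ and extend $\lambda^{T_K}_{S_0\cup S'}$ by $2+2d_K'$ new primes via Theorem~\ref{extension}. Heavy weights on $S_0$ make all forced linking data invisible in the initial forms, and Theorem~\ref{mildGammaRAAG} finishes. The failure is in the concrete graph.

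You join every $x\in S_0$ to the single vertex $a$, and let the relation of $a$ give the edge $\{a,b\}$. These $d+1$ edges form a star centred at $a$.
\begin{itemize}
\item For $d_K'=0$ (e.g.\ $K=\Q$) the vertex set is $S_0\cup\{a,b\}$, and the last relation, that of $b$, has only two options. It can reproduce $\{a,b\}$; then two initial forms are proportional to $[X_a,X_b]$, so they are linearly dependent and the presentation is not mild. Or it joins $b$ to some $x\in S_0$, which closes the triangle $a,b,x$. So \emph{avoiding triangles} is impossible.
\item For $d_K'>0$ the same thing happens. Joining each $x''$ to $a$ by one relation and to $b$ by the other creates triangles $a,b,x''$, contradicting the bipartition you announce.
\end{itemize}
Your fallback does not help. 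No reweighting of $a,b$ separates two relations whose initial forms are both multiples of $[X_a,X_b]$. Moreover, the leading monomial of $[X_u,X_v]$ is $X_uX_v$ or $X_vX_u$ according to the order, so combinatorial freeness of a family of commutators is again a bipartiteness condition, and the triangle still obstructs.

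A symptom is that you never use $|S_0|\ge 2$. With $|S_0|=1$ and $d_K'=0$ your count requires a simple graph with $3$ vertices and $3$ edges, i.e.\ a triangle. This is consistent with Remark~\ref{rem:Mildness for arbitrary fields}, which needs one more prime in that case.

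\textbf{The missing idea: two non-adjacent hubs and one rerouted relation.} Make the two auxiliary primes hubs on the same side of the bipartition, with no edge between them, and reroute one relation of $S_0$. The paper sets
\[
\widetilde\mu(x_i,a_1)\neq0\ (i\le d-1),\qquad \widetilde\mu(x_d,a_2)\neq0,\qquad \widetilde\mu(a_1,x_d)\neq0,\qquad \widetilde\mu(a_2,x_1)\neq0,
\]
with all other new values zero. The hubs' own relations then produce the new edges $\{a_1,x_d\}$ and $\{a_2,x_1\}$. The latter differs from $\{a_2,x_d\}$ exactly because $d\ge2$.

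For the Koch part, the paper puts $a_2$ into $\Supp_{\widetilde\varepsilon}(x_k')$ and takes $\widetilde\mu(x'_k,b_k)=\widetilde\mu(b_k,a_1)\neq0$. This gives the edges $\{a_2,b_k\}$ and $\{b_k,a_1\}$. Your variant works equally well once $a,b$ are non-adjacent hubs: put $x''_k$ in the support, point the $x'_k$-relation at one hub and the $x''_k$-relation at the other.

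One further point: $\Supp_{\widetilde\varepsilon}(x')$ must contain the forced set $\Supp_\varepsilon(x')\subseteq S_0$. So $\tau_{x'}\equiv\tau_{x''}$ holds only up to heavy factors. It is the weights that make the initial form of $\tau_{x'}$ a multiple of the light generator. With weight $2$ on $S_0$ and $1$ elsewhere, the resulting graph is bipartite with parts $\{a_1,a_2\}$ and $S_0\cup\{b_1,\dots,b_{d_K'}\}$, and has $d+2+2d_K'$ distinct edges.
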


\begin{proof}
We aim to show that~$G_{K,S_0\cup S_1}^{T_K}$ is mild (with respect to some filtration). Let $T_K$ be as in the statement, and choose a Koch set of tame primes $S'$ such that $S'\cap S_0=\emptyset$. Note that~$|S'|= d_K'$, which we abbreviate by $d'$. We write~$S \coloneq S_0\cup S'$. Let~$d \coloneq |S\setminus S'|=|S_0|\geq 2$. By Theorem \ref{theo-Liu}, the group $G_{K,S}^{T_K}$ is represented by a linking tuple $\lambda^{T_K}_S=(\bX', \bX, \varepsilon_S^{T_K}, \mu_S^{T_K}, \omega_S^{T_K})$ associated with the Koch set~$S'$.

We now define a linking tuple $(\bX', \widetilde{\bX}, \widetilde{\epsilon}, \widetilde{\mu}, \widetilde{\omega})$ that extends $\lambda^{T_K}_S$. Recall that~$\bX'\coloneq \{x_1',\dots, x_{d'}'\}$, and~$\bX\coloneq \{x_1,\dots, x_d\}$. We introduce~$\swidetilde{\bX}\coloneq \bX \amalg \{a_1, a_2, b_1,\dots , b_{d'}\}$, and we define the extension~$\swidetilde{\varepsilon}\colon \bX'\times \swidetilde{\bX}\to \{0,1\}$ of~$\varepsilon\coloneq \varepsilon_S^T$ by setting for $x\in \widetilde{\bX}$
    $$\widetilde{\varepsilon}(x_k', x)\coloneq\begin{cases}
        \varepsilon(x_k',x)&\text{if }x\in \bX,\\
        1&\text{if }x=a_2,\text{ and}\\
        0&\text{if }x\neq a_2\in \swidetilde{\bX}\setminus \bX.
    \end{cases}$$
This implies that $\Supp_{\widetilde{\varepsilon}}(x_k')=\Supp_{\varepsilon}(x_k')\cup \{a_2\}$. Thus~$D_{\varepsilon}\subset D_{\widetilde{\varepsilon}}$. For the convenience of the reader, we note that $D_{\widetilde{\varepsilon}}$ is the following (not necessarily disjoint) union:
$$D_{\widetilde{\varepsilon}}= D_{\varepsilon}\cup \Pi(\widetilde{\bX})\cup \{(x_k', b_i), (x_k',a_1)|\quad 1\leq k,i\leq d'\}.$$

Following the general construction, we define a map ~$\widetilde{\mu}\colon D_{\widetilde{\varepsilon}} \to \F_p$ extending~$\mu=\mu_S^{T_K}$ by:  
\begin{align*}
    &\widetilde{\mu}(x_i',b_i)=\widetilde{\mu}(b_i,a_1)\neq 0\qquad &&\text{for }1\leq i\leq d'\\
    &\widetilde{\mu}(x_i,a_1)\neq 0\quad &&\text{for }1\leq i\leq d-1\\
    &\widetilde{\mu}(a_2,x_1)=\widetilde{\mu}(x_d,a_2)=\widetilde{\mu}(a_1,x_d)\neq 0
\end{align*}
and zero for all other pairs in~$D_{\widetilde{\varepsilon}}\setminus D_{\varepsilon}$.

We furthermore set $\widetilde{\omega}_{\tilde{x}}= p^2$ for $\tilde{x}\in \swidetilde{\bX} \setminus \bX$. By Theorem~\ref{extension}, there exists a set of tame primes~$\widetilde{S} \supset S$ with $\widetilde{S} \cap T_K=\emptyset$ such that the linking tuple~$\lambda_{\widetilde{S}}^{T_K}\coloneq (\bX', \widetilde{\bX}, \varepsilon_{\widetilde{S}}^{T_K}, {\mu_{\widetilde{S}}^{T_K}}, \omega_{\widetilde{S}}^{T_K})$ associated with $S'$ satisfies
$$\mu_{\widetilde{S}}^{T_K}=\widetilde{\mu}, \quad \varepsilon_{\widetilde{S}}^{T_K}=\widetilde{\varepsilon}, \quad \nu_p\left( (\omega_{\widetilde{S}}^{T_K})_{\widetilde{x}}\right) \geq \nu_p(\widetilde{\omega}_{\widetilde{x}})=2 \text{ for } \widetilde{x}\in \widetilde{\bX}\setminus \bX .$$
 Note that~$|\swidetilde{S}|=d+2+2d'$. Set~$S_1\coloneq \swidetilde{S}\setminus S_0$; then~$|S_1|=d'+2+d'=2(1+d')$. 

We conclude by showing that~$G_{K,\widetilde{S}}^{T_K}= G_{K,S_0\cup S_1}^{T_K}$ is mild. For this purpose, we apply Theorem~\ref{mildGammaRAAG} to the presentation~$\smash{\PP_{\widetilde{S}}^{T_K}}$ coming from Theorem~\ref{thm:Realization of extended linking data}. We put weight $e_{x_1}=\dots =e_{x_d}=2$ and~$e_{a_i}=e_{b_j}= 1$. Since~$p\geq 3$ and for~$\widetilde{x}\in \swidetilde{\bX}\setminus \bX$ we have~$\nu_p( (\omega_{\widetilde{S}}^T)_{\widetilde{x}})\geq 2$, we infer the following (nontrivial) $d+d'+d'+2=2+d+2d'$ relations:
\begin{align*}
    \begin{array}{ll}
      l_{x_i}\equiv \lbrack \tau_{x_i},\tau_{a_1} \rbrack^{\mu_{\widetilde{S}}^{T_K} (x_i,a_1)} \pmod {F_{e,4}},   & l_{x_d}\equiv \lbrack \tau_{x_d},\tau_{a_2}\rbrack^{\mu_{\widetilde{S}}^{T_K}(x_d,a_2)} \pmod {F_{e,4}} \\
      l_{a_1}\equiv \lbrack \tau_{a_1},\tau_{x_d}\rbrack^{\mu_{\widetilde{S}}^{T_K}(a_1,x_d)} \pmod{F_{e,4}},   & l_{a_2}\equiv \lbrack \tau_{a_2},\tau_{x_1}\rbrack^{\mu_{\widetilde{S}}^{T_K}(a_2,x_1)} \pmod{F_{e,4}},\\
      l_{b_i}\equiv \lbrack \tau_{b_i},\tau_{a_1}\rbrack^{\mu_{\widetilde{S}}^{T_K}(b_i,a_1)} \pmod{F_{e,3}}, & l_{x_k'}\equiv \lbrack \tau_{b_k},\tau_{a_2}\rbrack^{\mu_{\widetilde{S}}^{T_K}(x_k',b_k)\cdot (\mu_{\widetilde{S}}^{T_K})'(x_k',a_2)}\pmod{F_{e,3}}
    \end{array}
\end{align*}

Therefore, the group~$G_{K,\widetilde{S}}^{T_K}$ has an~$(\widetilde{\bX},e)$-RAAG presentation. The underlying graph~$\Gamma$ has $\swidetilde{\bX}\coloneq \{a_1,a_2,b_1,\dots, b_{d'}, x_1,\dots ,x_d\}$, as set of vertices and the~$2+d+d'+d'=d+2+2d'$ edges given by:
$$\bE\coloneq \{ \{ x_1,a_1\},\dots ,\{x_{d-1}, a_1\}, \{x_d, a_{2}\}, \{a_{1},x_d\}, \{a_{2},x_1\}, \{b_{k},a_1\},\{a_2, b_{k}\}\}.$$
Note that the graph~$\Gamma$ is bipartite: for instance, we endow the vertices~$\{a_1,a_2\}$ with blue color and the vertices~$\{b_1,\dots, b_{d'}, x_1,\dots ,x_d\}$ with red color. 

    \begin{center}
    \begin{tikzpicture}
    \node at (-2,2) {$\Gamma \coloneq$};

    \node[circle,fill=red,inner sep=1.5pt,label=right:{\color{red}$x_1$}] (x1) at (6,4) {};
    \node[circle,fill=red,inner sep=1.5pt,label=right:{\color{red}$x_2$}] (x2) at (6,3) {};
    \node[circle,fill=red,inner sep=1.5pt,label=right:{\color{red}$\cdots$}] (dots1) at (6,2) {};
    \node[circle,fill=red,inner sep=1.5pt,label=right:{\color{red}$x_{d-1}$}] (xd-1) at (6,1) {};
    \node[circle,fill=red,inner sep=1.5pt,label=right:{\color{red}$x_d$}] (xd) at (6,0) {};

    \node[circle,fill=blue,inner sep=1.5pt,label=above:{\color{blue}$a_1$}] (a1) at (3,3) {};
    \node[circle,fill=blue,inner sep=1.5pt,label=below:{\color{blue}$a_2$}] (a2) at (3,1) {};

    \node[circle,fill=red,inner sep=1.5pt,label=left:{\color{red}$b_1$}] (b1) at (0,4) {};
    \node[circle,fill=red,inner sep=1.5pt,label=left:{\color{red}$b_2$}] (b2) at (0,3) {};
    \node[circle,fill=red,inner sep=1.5pt,label=left:{\color{red}$\cdots$}] (dots2) at (0,2) {};
    \node[circle,fill=red,inner sep=1.5pt,label=left:{\color{red}$b_{d'-1}$}] (bd-1) at (0,1) {};
    \node[circle,fill=red,inner sep=1.5pt,label=left:{\color{red}$b_{d'}$}] (bd) at (0,0) {};

    \draw (a1) -- (xd);
    \draw (a2) -- (xd);
    \draw (a2) -- (x1);

    \draw (b1) -- (a2);
    \draw (b2) -- (a2);
    \draw (bd-1) -- (a2);
    \draw (bd) -- (a2);
    \draw (dots2) -- (a2);

    \draw (b1) -- (a1);
    \draw (b2) -- (a1);
    \draw (bd-1) -- (a1);
    \draw (bd) -- (a1);
    \draw (dots2) -- (a1);

    \draw (x1) -- (a1);
    \draw (x2) -- (a1);
    \draw (xd-1) -- (a1);
    \draw (xd) -- (a1);
    \draw (a1) -- (dots1);
\end{tikzpicture}
    \end{center} 

\justifying
Thus, we conclude from Theorem~\ref{mildGammaRAAG} that~$\PP_{\widetilde{S}}^{T_K}$ is mild. The deficiency of~$G_{\widetilde{S}}^{T_K}$ is~$d+2+2d'-(d+2+d')=d'$.
\end{proof}

\begin{rema}
    Assume that~$S_0$ admits a Koch-type set. Then we can take~$S_1$ of size~$2+d_K'$ such that~$G_{K,S_1\cup S_0}^{T_K}$ is mild. As noted in Proposition~\ref{prop:Probabilty for existence of Koch set} and Remark~\ref{rem:Application existence Koch tuple}, this situation is relatively common for large~$S_0$.
\end{rema}

\begin{rema}\label{rem:sharper result first theorem}
The proof of Theorem~\ref{thm:Mildness for arbitrary fields} shows a sharper result than that~$G_{K, S_0\cup S_1}^{T_K}$ has cohomological dimension~$2$.
There exists a bipartite graph~$\Gamma \coloneq (\widetilde{\bX}, \bE)$, with~$|\widetilde{\bX}|=|S_0|+d_K'+2$ and a weight~$e \in \NN^{|\widetilde{\bX}|}$, such that~$G_{K,S_1\cup S_0}^{T_K}$ is a~$(\widetilde{\bX},e)$-RAAG, and so:
$$\Ll_e(\widetilde{\bX},G_{K,S_0\cup S_1}^{T_K})\simeq \Ll_e(\widetilde{\bX},\Gamma).$$
Furthermore, the graph~$\Gamma$ and the weight~$e$ are both explicitly constructed.
\end{rema}

As a direct consequence, we infer Corollary~\ref{coco Labute}.
\begin{coro}\label{red Lab}
    Assume~$d'_K=0$ (for instance if~$K=\Q$). Let~$S_0$ be a set of tame primes, with cardinality~$d\geq 2$. Then there exists a set~$S_1$ of tame primes of size~$2$ such that~$G_{S_0\cup S_1}$ admits a mild presentation for some~$(\widetilde{\bX},e)$-filtration. In particular~$G_{S_0\cup S_1}$ has cohomological dimension $2$ and deficiency~$0$.
\end{coro}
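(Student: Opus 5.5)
This is a direct specialization of Theorem~\ref{thm:Mildness for arbitrary fields} to the case $d'_K=0$; I would not redo the construction, only check that each ingredient degenerates correctly.

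\textbf{The empty Koch set.} When $d'_K=0$ we have $r_K=0$ and $d_p{\rm cl}(K)=0$, so $T_K=\emptyset$. Then $\RB_\emptyset(K)$ has dimension $d'_K=0$, and $G_{K,\emptyset}$, the Galois group of the maximal unramified pro-$p$ extension, is trivial because the $p$-class group is trivial. Hence $S'=\emptyset$ is a Koch set. In particular every finite tame set $S_0$ admits it, and the disjointness condition from $T_K$ is vacuous. For $K=\Q$ this is immediate: $r_\Q=0$, ${\rm cl}(\Q)=1$ and $\zeta_p\notin\Q$ since $p$ is odd.

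\textbf{Applying the theorem.} Apply Theorem~\ref{thm:Mildness for arbitrary fields} with $S'=\emptyset$, so $\bX'=\emptyset$ and there are no $b_j$'s. It produces $S_1$ of size $2(1+d'_K)=2$, corresponding to the two new vertices $a_1,a_2$, with $G^{T_K}_{K,S_0\cup S_1}=G_{K,S_0\cup S_1}$.

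\textbf{Checking the proof still goes through.} In the proof, the linking tuple reduces to $(\widetilde{\bX},\widetilde{\mu},\widetilde{\omega})$ with $\widetilde{\bX}=\{x_1,\dots,x_d,a_1,a_2\}$. The weights are $e_{x_i}=2$ and $e_{a_i}=1$, and the relations are $l_{x_i}$ $(i<d)$, $l_{x_d}$, $l_{a_1}$, $l_{a_2}$ with the initial forms listed there. Their graph has edges
\[
\{x_i,a_1\}\ (i\le d),\qquad \{x_d,a_2\},\qquad \{x_1,a_2\}.
\]
This graph is bipartite with parts $\{a_1,a_2\}$ and $\{x_1,\dots,x_d\}$. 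The hypothesis $d\ge2$ ensures that the edges $\{a_1,x_d\}$, coming from the relation $l_{a_1}$, and $\{x_d,a_2\}$ are consistent with the other relations. It also ensures $x_1\neq x_d$, so the graph has exactly $d+2$ distinct edges, one per relation.

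\textbf{Conclusion.} Theorem~\ref{mildGammaRAAG} then gives mildness for this $(\widetilde{\bX},e)$-filtration, and hence cohomological dimension~$2$. The presentation has $d+2$ generators and $d+2$ relations, so the deficiency is $0=d'_K$.

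The only step needing any care is confirming that $S'=\emptyset$ is a legitimate Koch set, so that Theorem~\ref{theo-Liu} applies with no $\bX'$ part. Since $\RB_\emptyset=1$ and $G_\emptyset=1$, I expect this to be routine.
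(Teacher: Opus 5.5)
Your proposal is correct and follows the paper's route: the paper obtains this corollary as a direct consequence of Theorem~\ref{thm:Mildness for arbitrary fields}. Your checks are exactly the degenerations implicit in that deduction: $d_K'=0$ forces $T_K=\emptyset$ and makes $S'=\emptyset$ a Koch set, so there are no $b_j$'s and $|S_1|=2$; the graph on $\{a_1,a_2\}\amalg\{x_1,\dots,x_d\}$ is bipartite with $d+2$ distinct edges precisely because $d\geq 2$; and the deficiency is $(d+2)-(d+2)=0$.
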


\begin{rema}
{In the case~$K=\Q$, Theorem~\ref{extension} gives us large freedom in the choice of linking numbers, which are related to the values of some cup products (see for instance~\cite[Proposition~3.9.13]{NSW}). This fact allows us to propose an alternative proof of Corollary~\ref{red Lab} using the criterion of Schmidt~\cite[Theorem~5.5]{Schmidt2007}. We denote by~$\chi_{a_1},\chi_{a_2}, \chi_{x_1},\dots, \chi_{x_d}$ the characters associated to~$\tau_{a_1},\tau_{a_2},\tau_{x_1},\dots, \tau_{x_d}$ in~$H^1(G_{S_0\cup S_1})$. We define by~$V$ the vector space generated by~$\chi_{a_1}, \chi_{a_2}$ and~$U$ the vector space generated by~$\chi_{x_1}, \dots, \chi_{x_d}$. Then the decomposition~$H^1(G_{S_1\cup S_0})=U\oplus V$ satisfies the condition of the criterion. Thus,~$G_{S_0\cup S_1}$ has a mild presentation.}
\end{rema}

\begin{rema}\label{rem:Mildness for arbitrary fields}
The proof of Theorem~\ref{thm:Mildness for arbitrary fields} allows us to show the following results:

    $\bullet$ Assume that~$S_0$ contains only one tame prime. Then, there exists a set~$S_1$ of tame primes of size~$|S_1|=1+2(1+d_K')$ such that~$G_{K,S_1\cup S_0}^{T_K}$ is mild (and has deficiency~$d_K'$).

    $\bullet$ Assume that~$S_0$ is empty. Then, there exists a set~$S_1$ of tame primes of size~$|S_1|=2+2(1+d_K')$ such that~$G_{K,S_0}^{T_K}$ is mild (and has deficiency~$d_K'$). Using Proposition~\ref{app-thm1} we can make better choices of~$S_1$, which allow us to control cup-products. For the special case~$K=\Q$, we refer to Remark~\ref{exa:4-cycle}.
\end{rema}

\section{Genus problems for $G_{K,S}^T$ and graph theory}
\label{sec:Genus Theory}
We follow the terminology of Leoni~\cite{leoni2024zassenhaus}, which we adapt to our context. Let~$\Gamma\coloneq (\bX, \bE)$ be an undirected graph. We define the genus of~$\Gamma$ as the set of pro-$p$ groups:
$${\rm gen}(\Gamma)\coloneq \{G \text{ such that } \Ll(G)\simeq \Ll(\Gamma)\}.$$
We note that the pro-$p$ RAAG~$G_\Gamma$ (defined in Example~\ref{exem: def RAAG}) is always in~${\rm gen}(\Gamma)$. Thus the previous set is never empty. Furthermore, for a fixed~$\Gamma$, all pro-$p$ groups in~${\rm gen}(\Gamma)$ share nice common properties. One of the most notable properties is cohomology. Precisely, as shown for instance in~\cite[Proposition~$1$]{hamza2023extensions}, every pro-$p$ group~$G$ in~${\rm gen}(\Gamma)$ satisfies an isomorphism of graded algebras~$H^\bullet(G)\simeq \A(\Gamma)$.

We now return to the cohomological inverse problem stated in the introduction. We fix a number field~$K$ not containing~$\zeta_p$. For which graphs~$\Gamma$ do there exist a finite set of tame primes~$S$ and a finite disjoint set of primes~$T$ such that~$G_{K,S}^{T}$ is in~${\rm gen}(\Gamma)$? If~$G_{K,S}^{T}$ answers the previous question positively, then we note, using the FAB property, that~$G_{K,S}^{T}$ is not isomorphic to~$G_\Gamma$. 

Proposition~\ref{prop:rank of G_S^T} and Theorem~\ref{theo-Liu} already give some obstructions on the structure of~$\Gamma$. Yet, as noted in the introduction, we currently have no tools to handle the case where the cohomological dimension of~$G_{K,S}^{T}$ is finite but strictly greater than~$2$. This limitation naturally leads us to consider triangle-free graphs, a choice justified by Proposition~\ref{1-RAAG}.

\subsection{Pseudoforest graphs}
This section exhibits a family of graphs relevant to the genus problem. We say that a graph~$\Gamma=(\bX, \bE)$ is a \emph{pseudoforest} if each connected component has at most one cycle. If $|\bX|=|\bE|$, then each connected component has precisely one cycle by Euler's formula. We call those graphs \emph{strict pseudoforests}. We first record the following lemma.

\begin{lemm}\label{lemm-pfst}
The graph $\Gamma=(\bX, \bE)$ is a strict pseudoforest if and only if there exists a map $\varphi_\Gamma\colon \bX \to \bX$ such that
\begin{equation*}
    \bE=\bE_{\varphi_\Gamma} \coloneq \{ \{x_v, \varphi_\Gamma(x_v)\} \mid x_v \in \bX \}
\end{equation*}
and $\varphi_\Gamma^2$ has no fixed points.
\end{lemm}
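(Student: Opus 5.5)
We prove both directions of Lemma~\ref{lemm-pfst} by working one connected component at a time. Both conditions split over components: the edge set is the disjoint union of the edge sets of the components, and a map $\varphi_\Gamma$ can be assembled from maps on the components. Throughout, graphs are simple, so there are no loops and no multiple edges.

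\emph{($\Leftarrow$)} Suppose $\varphi=\varphi_\Gamma$ is given with $\bE=\bE_\varphi$ and $\varphi^2$ fixed-point free.
\begin{itemize}
\item Since $\varphi^2$ has no fixed points, $\varphi$ has none either, so every $\{x,\varphi(x)\}$ is a genuine edge.
\item The assignment $x\mapsto\{x,\varphi(x)\}$ is injective. If $\{x,\varphi(x)\}=\{y,\varphi(y)\}$ with $x\neq y$, then $y=\varphi(x)$ and $x=\varphi(y)$, so $\varphi^2(x)=x$, which is excluded.
\item Hence $|\bE|=|\bX|$.
\item Every edge is of the form $\{x,\varphi(x)\}$, so its endpoints lie in the same component $C$, and $\varphi$ maps $C$ to itself.
\item Applying the same injectivity count inside $C$ gives $|\bE(C)|=|C|$.
\end{itemize}
A connected graph with equally many edges and vertices contains exactly one cycle: a spanning tree has $|C|-1$ edges, and the one extra edge closes a unique cycle. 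So every component has exactly one cycle, and $\Gamma$ is a strict pseudoforest.

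\emph{($\Rightarrow$)} Let $C$ be a component with unique cycle $Z=(z_1,\dots,z_k)$. Simplicity forces $k\geq3$. Define $\varphi$ on $C$ as follows.
\begin{itemize}
\item On the cycle, set $\varphi(z_i)=z_{i+1}$, with indices taken mod $k$.
\item Deleting the edges of $Z$ leaves a forest. Each tree in this forest contains exactly one cycle vertex; otherwise there would be a second cycle.
\item Root each such tree at its cycle vertex. For a vertex $x\notin Z$, let $\varphi(x)$ be its parent, i.e.\ the next vertex on the unique path from $x$ to the root.
\end{itemize}
Now check that $\bE(C)=\bE_\varphi\cap C$.
\begin{itemize}
\item Each cycle edge is $\{z_i,\varphi(z_i)\}$.
\item Each tree edge is $\{x,\varphi(x)\}$, where $x$ is its endpoint farther from the root.
\item Conversely, every pair $\{x,\varphi(x)\}$ is an edge by construction.
\end{itemize}

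Finally, $\varphi^2$ has no fixed points.
\begin{itemize}
\item On the cycle, $\varphi^2(z_i)=z_{i+2}\neq z_i$, because $k\geq3$.
\item Off the cycle, $\varphi$ strictly decreases the distance to the cycle until it reaches it, and then stays on the cycle. So a point off the cycle is never mapped back to itself, and in particular $\varphi^2(x)=x$ is impossible.
\end{itemize}

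The key step is the injectivity argument in the first direction. This is exactly where the hypothesis on $\varphi^2$ (rather than just on $\varphi$) is needed: a $2$-cycle of $\varphi$ would make two vertices produce the same edge, so $|\bE|$ would drop below $|\bX|$. The rest is the standard structure of unicyclic graphs.
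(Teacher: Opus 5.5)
Your proof is correct. Your forward direction is essentially the paper's construction. The paper orients the unique cycle of each component and orients the attached trees away from it, so every vertex has indegree one, and it sets $\varphi_\Gamma(x)$ to be the tail of the unique edge entering $x$. That is your map up to reversing the cycle.

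The differences lie in how the remaining checks are done. The paper first observes that, for any $\varphi$, the map $\varphi^2$ is fixed-point free exactly when $|\bE_\varphi|=|\bX|$; this is your injectivity argument. It uses this observation twice: in the forward direction to get the fixed-point condition from $|\bE|=|\bX|$, and in the converse to get strictness. For the converse it orients each edge from $\varphi(x)$ to $x$. It then invokes the cited characterization that a graph is a pseudoforest if and only if its edges admit an orientation with all indegrees at most one.

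You instead verify that $\varphi^2$ is fixed-point free by hand, using cycle length $\geq 3$ and the strictly decreasing depth. In the converse you avoid the external characterization entirely: $\varphi$ preserves components, the per-component count gives $|\bE(C)|=|C|$, and a connected graph with as many edges as vertices is a spanning tree plus one edge, hence unicyclic.

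Your route is self-contained and elementary. The paper's is shorter because it outsources the structural part to the orientation criterion and lets the single counting observation do double duty.
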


\begin{proof}
For this proof, we use the following characterization of pseudoforest from~\cite[\S 1.1]{pseudoforest20}. A graph is a pseudoforest if and only if its edges can be oriented so that each vertex has indegree at most one.

First, observe that for any choice of $\varphi_\Gamma$, the map $\varphi_\Gamma^2$ has no fixed point if and only if $|\bX|=|\bE_{\varphi_\Gamma}|$. Let $\Gamma$ be a strict pseudoforest. For each connected component of $\Gamma$, fix an arbitrary orientation on its unique cycle. On each tree attached to the cycle, orient every edge away from the cycle. In this way, we obtain an orientation such that every vertex has indegree $1$. For each vertex $x_v$, define $\varphi_\Gamma(x_v)$ to be the unique vertex having an oriented edge toward $x_v$. 

Conversely, suppose that we are given a function $\varphi_\Gamma$ such that $\bE_{\varphi_\Gamma}=\bE$. We orient each edge from $\varphi_\Gamma(x_v)$ to $x_v$. Then every vertex has indegree $1$. It follows that $\Gamma$ is a pseudoforest.
\end{proof}

A graph $\Gamma=(\bX,\bE)$ is said to be~\emph{inertial} if it contains a strict pseudoforest $(\bX,\bE')$ such that~$\bE'\subseteq \bE$. In particular, for a number field~$K$, the graph~$\Gamma$ is called \emph{$K$-inertial} if it is inertial with $|\bE|\geq d_K' + |\bX|$.

\begin{rema}\label{inertial and map0}
 Assume that~$\Gamma\coloneq (\bX,\bE)$ is $K$-inertial. We denote by~$\Gamma_0\coloneq (\bX,\bE_0)$ a strict pseudoforest satisfying~$\bE_0\subset \bE$. By Lemma~\ref{lemm-pfst}, there is a map $\varphi_\Gamma:\bX\to \bX$, such that $\varphi_\Gamma^2$ has no fixed point, and~$\bE_{\varphi_\Gamma}=\bE_0$. Furthermore, the graph $\Gamma$ is triangle-free if and only if~$\varphi_\Gamma^3$ has no fixed point.
\end{rema}

\subsection{The result for the genus problem in arithmetic}
Let us now state the result of this section.
\begin{prop}\label{app-thm1}
Let $\Gamma\coloneq (\bX, \bE)$ be a triangle-free~$K$-inertial graph. Then there exist disjoint finite sets~$S$ and~$T$ of primes of~$K$ such that~$G_{K,S}^{T_K}$ is in~${\rm gen}(\Gamma)$. Thus~$H^\bullet(G_{K,S}^T)\simeq \A(\Gamma)$.
\end{prop}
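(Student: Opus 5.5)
The approach is to realize $\Gamma$ through a Koch-type presentation in which each relation has initial form the commutator $[X_u,X_v]$ of exactly one edge $\{x_u,x_v\}\in\bE$. We then conclude with Proposition~\ref{1-RAAG}.

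\textbf{Counting constraint.} If $S$ admits a Koch set $S'$ for a splitting set $T$, then by Proposition~\ref{prop:rank of G_S^T} the group $G_{K,S}^T$ has $|S|-|S'|$ generators and $|S|$ relations. We need generators $=|\bX|$ and relations $=|\bE|$, so we need $|S'|=|\bE|-|\bX|$. This quantity is $\geq d_K'$ by $K$-inertiality, but may be strictly larger. To absorb the surplus $s\coloneq|\bE|-|\bX|-d_K'\ge 0$, I will not work with $T_K$ alone. Instead I take $T\coloneq T_K\cup T_+$, where $T_+$ consists of $s$ auxiliary primes, chosen by Chebotarev, whose decomposition conditions impose independent Kummer conditions. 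Concretely, $\dim\RB^{T}=d_K'+s$, via the exact sequence of Lemma~\ref{lem:Unramified exact sequence} with $S=\emptyset$: the $H^1(\GG_\t^{\rm ur})$ for $\t\in T_+$ inject into $\RB^T$ once $T_K$ already kills ${\rm cl}(K)/p$.

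\textbf{Step 1: a Koch set for $T$.} I will redo the proof of Lemma~\ref{lem:Existence of Koch Tuples} with $T$ in place of $T_K$. Choose $d_K'+s$ tame primes whose Frobenii in $\Gov^T/K_1$ form a basis of $\RB^T$. By Lemma~\ref{lem:Governing field extension generated by Frobenius} this gives $\RB_{S'}^T=1$, and Proposition~\ref{prop:rank of G_S^T} then gives $G_{K,S'}^T=1$. So $S'$ is a Koch set for $T$ with $|S'|=|\bE|-|\bX|$.

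The proofs of Theorem~\ref{theo-Liu} and of the technical lemma behind Theorem~\ref{extension} only use that $T$ is disjoint from $S$, that $S'$ is Koch for $T$, and Gras--Munnier, together with the Kummer/class field theory exact sequences. Replacing $\mathcal T_S^{T_K}$ by $\mathcal T_S^{T}$ verbatim, $G_{K,S}^T$ is again represented by $\lambda_S^T$, and every linking tuple over $\bX'=S'$ is realizable by adding primes, with $v_p(\omega)\ge 1$ (indeed $\geq 2$ if desired). This transfer to a larger $T$ is where I expect the main care to be needed, particularly in checking independence of the $T_+$-conditions in $\RB^T$ and the Gras--Munnier step. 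This is what I would verify first.

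\textbf{Step 2: the linking tuple.} Let $\Gamma_0=(\bX,\bE_0)\subseteq\Gamma$ be a strict pseudoforest, and let $\varphi_\Gamma$ be as in Remark~\ref{inertial and map0}. Then $|\bE\setminus\bE_0|=|\bE|-|\bX|=|\bX'|$, so we may fix a bijection $\bX'\to\bE\setminus\bE_0$, written $x'\mapsto\{u_{x'},v_{x'}\}$. Define the linking tuple as follows.
\begin{itemize}
\item For $x\in\bX$: set $\mu(x,\varphi_\Gamma(x))=1$ and $\mu(x,y)=0$ otherwise. This is allowed since $\varphi_\Gamma(x)\ne x$, because $\varphi_\Gamma^2$ has no fixed point.
\item For $x'\in\bX'$: set $\Supp_\varepsilon(x')=\{u_{x'}\}$ with $\mu'(x',u_{x'})=1$, and $\mu(x',v_{x'})=1$, with all other values $0$.
\item Set $\omega_z=p$ for all $z$.
\end{itemize}
Realize this tuple by Step 1, starting from $S'$ and applying Theorem~\ref{thm:Realization of extended linking data}/Theorem~\ref{extension} in the $T$-version.

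\textbf{Step 3: reading off the relations.} Since $p$ is odd and $p\mid\omega_z$, we have $\tau^{\omega_z}\in F_p\subseteq F_3$. Modulo $F_3$ the relations therefore become:
\begin{itemize}
\item $l_x\equiv[\tau_x,\tau_{\varphi_\Gamma(x)}]$ for $x\in\bX$;
\item $l_{x'}\equiv[\tau_{u_{x'}},\tau_{v_{x'}}]$ for $x'\in\bX'$. Here the error terms in $\tau_{x'}$ and $y_{x'}$ lie in $F_2$ and in $\langle \tau_{u_{x'}}\rangle$, and they contribute only to $F_3$ or to $[\tau_{u},\tau_u]$-type terms, which vanish.
\end{itemize}
The edges $\{x,\varphi_\Gamma(x)\}$ are pairwise distinct, since $|\bE_{\varphi_\Gamma}|=|\bX|$. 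Hence the relations are indexed bijectively by $\bE$, and $G_{K,S}^T$ is a $1$-RAAG for $\Gamma$.

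\textbf{Conclusion.} Since $\Gamma$ is triangle-free, Proposition~\ref{1-RAAG} applies. It gives mildness for the Zassenhaus filtration, the isomorphism $\Ll(G_{K,S}^T)\simeq\Ll(\Gamma)$, that is $G_{K,S}^T\in{\rm gen}(\Gamma)$, and $H^\bullet(G_{K,S}^T)\simeq\A(\Gamma)$. When $s=0$ we have $T=T_K$, which recovers the statement with $T_K$ exactly as written.
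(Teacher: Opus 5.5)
Your proposal is correct, but it handles the surplus edges by a different route from the paper.

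\textbf{The paper's route.} The paper keeps $T=T_K$ while building the Koch-type presentation. It splits $\bE=\bE_{\varphi_\Gamma}\amalg\bE_1\amalg\bE_2$ with $|\bE_1|=d_K'$. It realizes only the pseudoforest edges and the $d_K'$ edges of $\bE_1$ through the linking tuple, which is exactly your Step 2 with $s=0$. It then kills each remaining edge $\{x_1,x_2\}\in\bE_2$ after the fact: it adds to $T$ a prime $\mathfrak l$ whose Frobenius in $G_{K,S}^{T_K}$ is $\equiv[\tau_{x_1},\tau_{x_2}]\pmod{F_3}$, found by Chebotarev in the finite quotient $G_{K,S}^{T_K}/(G_{K,S}^{T_K})_3$. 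Since this Frobenius lies in $F_2$, the generators are unchanged, and one gets one extra relation with initial form $[X_{x_1},X_{x_2}]$.

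\textbf{Your route.} You enlarge $T$ first, so that the Koch set has size $|\bE|-|\bX|$ and every edge comes from a local relation at a prime of $S$. This needs the $T$-versions of Theorem~\ref{theo-Liu} and of the lemma behind Theorem~\ref{extension}. These do go through verbatim: the arguments only use that $S'$ is Koch for $T$, that $S\cap T=\emptyset$, Gras--Munnier, and $\zeta_p\notin K$. Liu's theorem is in any case stated in that generality.

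Your worry about independence of the $T_+$-conditions is vacuous. The map $H^1(G_\emptyset)\to\bigoplus_{\t\in T_K}H^1(\GG_\t^{\rm ur})$ is already an isomorphism. Hence $\dim\RB^T=d_K'+|T_+|$ for any $T_+$ disjoint from $T_K$, with no special choice needed.

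\textbf{Comparison.} The paper's route is shorter, uses the stated theorems exactly as they are, and needs fewer ramified primes: $|S|=|\bX|+d_K'$ instead of your $|S|=|\bE|$. Your route keeps $G_{K,S}^T$ represented by a linking tuple, with $S$ admitting a Koch set relative to $T$ and all relations of local Koch type. The price is re-running the arithmetic machinery for a larger $T$. Both routes use the same number $|\bE|-|\bX|-d_K'$ of extra splitting primes.

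\textbf{A small point.} $\mu'$ is not part of the linking-tuple data, so you cannot prescribe $\mu'(x',u_{x'})=1$ through the realization theorem. You only need it to be nonzero, which $\varepsilon(x',u_{x'})=1$ guarantees; alternatively, normalize by the choice of inertia generator $\tau_{x'}$.
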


\begin{proof}
From Remark \ref{inertial and map0}, the graph $\Gamma$ contains a strict pseudoforest $(\bX, \bE_{\varphi_\Gamma})$ associated with a function $\varphi_\Gamma : \bX \to \bX$ such that~$\bE = \bE_{\varphi_\Gamma} \amalg \bE_1 \amalg \bE_2$, where~$|\bE_1|=d_K'$ and $|\bE_2| \geq 0$. We identify~$\bX'$ with~$\bE_1$. For each~$x'\in \bX'$, we orient the corresponding edge from~$h(x')\in \bX$ to~$t(x')\in \bX$. 
We define a function $\varepsilon\colon \bX' \times \bX \to \{0,1\}$ by 
    \begin{align*}
        \varepsilon(x',x)=\begin{cases}
           1&\text{if }x=h(x'),\\
            0&\text{otherwise.}
        \end{cases}
    \end{align*}
Thus, we have $\Supp_{\varepsilon}(x')=\{h(x')\}$, and so~$D_\varepsilon= \Pi(\bX)\amalg \{(x',x)| x'\in \bX, x\neq h(x')\}$. We define a function $\mu : D_{\varepsilon} \to \F_p$ as follows. For $x, y \in \bX$, we set  
    \begin{align*}
        \mu(x,y)=\begin{cases}
            1 &\text{if }y=\varphi_\Gamma(x)\text{ and}\\
            0&\text{otherwise.}
        \end{cases}
    \end{align*}
Furthermore, for $x' \in \bX'$ and $y \notin \Supp_\varepsilon( x')$, i.e.\ $y\in \bX$ but~$y\neq h(x')$, we set
 \begin{align*}
        \mu(x',y)=\begin{cases}
            1&\text{if }y=t(x')\text{ and}\\
            0&\text{otherwise.}
        \end{cases}
    \end{align*}
By Theorem \ref{extension}, there exists a finite tame set~$S$ admitting a Koch set~$S'$ satisfying $\lambda^{T_K}_S=(\bX', \bX, \varepsilon, \mu, \omega)$ with $\omega=1$. By Theorem \ref{theo-Liu}, the group $G_{K,S}^{T_K}$ admits a minimal presentation
\begin{equation*}
G_{K,S}^{T_K} \cong \left\langle \tau_x \in \bX \;\middle|\; l_x,\ l'_y \ (x\in \bX,\ y\in \bX') \right\rangle
\end{equation*}
satisfying $l_x \equiv \lbrack \tau_x, \tau_{\varphi_\Gamma(x)}\rbrack \pmod{F_2}$ and $l'_y \equiv \lbrack \tau_{h(y)}, \tau_{t(y)}\rbrack \pmod{F_2}$. Here, the latter congruence follows from the assumption~$p\geq 3$.

If $\bE_2 \neq \emptyset$, then we enlarge $T_K$ to $T$ by adding primes $\mathfrak{l}$ whose Frobenius automorphism $\sigma_{\mathfrak{l}} \in G_{K,S}^{T_K}$ satisfies $\sigma_{\mathfrak{l}} \equiv [\tau_{x_1}, \tau_{x_2}] \pmod{F_3}$ for $\{{x_1}, {x_2}\} \in \bE_2$, using the Chebotarev density theorem. One then checks that $G_{K,S}^{T}$ is a $1$-RAAG associated with the graph $\Gamma$. Hence, by Proposition~\ref{1-RAAG}, the group $G_{K,S}^{T}$ is mild, and the desired statements follow.
\end{proof}

\begin{coro}
\label{cor:Genus for d'=0}
    Take~$d_K'=0$. Let~$\Gamma$ be a triangle-free inertial graph. Then there exist disjoint finite sets~$S$ and~$T$ of primes of~$K$ such that~$G_{K,S}^{T}$ is in~${\rm gen}(\Gamma)$. Thus~$H^{\bullet}(G_{K,S}^T) \simeq \A(\Gamma)$.
\end{coro}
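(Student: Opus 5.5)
This is a direct specialization of Proposition~\ref{app-thm1}, so the plan is to reduce to it. The two definitions differ only by a size condition. A graph $\Gamma=(\bX,\bE)$ is $K$-inertial when it is inertial and $|\bE|\geq d_K'+|\bX|$. When $d_K'=0$ this inequality reads $|\bE|\geq |\bX|$.

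The first step is to check that this inequality holds automatically for any inertial graph. By definition $\Gamma$ contains a strict pseudoforest $(\bX,\bE')$ with $\bE'\subseteq\bE$. Strictness means $|\bE'|=|\bX|$, so $|\bE|\geq|\bE'|=|\bX|=d_K'+|\bX|$. Hence every triangle-free inertial graph is triangle-free and $K$-inertial.

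The second step is to apply Proposition~\ref{app-thm1}. It gives disjoint finite sets $S$ and $T$ with $G_{K,S}^{T}\in{\rm gen}(\Gamma)$ and $H^\bullet(G_{K,S}^T)\simeq\A(\Gamma)$. In that proof $T$ is obtained by enlarging $T_K$. With $d_K'=0$ we have $T_K=\emptyset$ and the Koch set is $S'=\emptyset$, so $\bX'=\bE_1=\emptyset$. The only ingredient is then the pseudoforest part $\bE_{\varphi_\Gamma}$ together with the extra edges $\bE_2$, and the latter are handled by the added splitting primes in $T$.

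The cohomology statement needs no separate argument. It follows from membership in ${\rm gen}(\Gamma)$ by \cite[Proposition~$1$]{hamza2023extensions}, as recalled at the start of the section, or equivalently from Proposition~\ref{1-RAAG} because $\Gamma$ is triangle-free.

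I do not expect a real obstacle. The only point to watch is the degenerate case $\bE_1=\emptyset$, where one must check that the construction in Proposition~\ref{app-thm1} still makes sense; it does, with $\bX'$ empty and the linking tuple reducing to $(\bX,\mu,\omega)$.
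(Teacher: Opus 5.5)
Your proposal is correct and follows the paper's intended route. The paper states the corollary as an immediate specialization of Proposition~\ref{app-thm1}, and the only thing to verify is exactly your observation: the strict pseudoforest $(\bX,\bE')$ gives $|\bE|\geq|\bE'|=|\bX|=d_K'+|\bX|$ when $d_K'=0$, so every inertial graph is $K$-inertial (your side remarks that $T_K=\emptyset$, $S'=\emptyset$ and $\bE_1=\emptyset$ in this case are accurate).
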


    \begin{rema}\label{exa:4-cycle}
        Let $\Gamma$ be a cycle with $4$ vertices, then the associated algebra $\A(\Gamma)$ is not universally Koszul (see~\cite{minac2020enhanced} for a definition of universal Koszul algebras and conjectures). Thus, conjecturally there is no field~$K$, containing a primitive $p$-th root of unity, such that $H^\bullet(G_K)\simeq \A(\Gamma)$ and thus~$G_K$ can not be in~${\rm gen}(\Gamma)$.

        It has already been shown that~$G_\Gamma$ does not occur as~$G_K$ (see~\cite[Theorem~$5.6$]{Quadrelli2014}). The situation with maximal pro-$p$ Galois groups with restricted ramification is very much different. Indeed, by Corollary~\ref{cor:Genus for d'=0} applied to~$K=\Q$, we find a tame~$S$ such that~$G_S$ is in~${\rm gen}(\Gamma)$ and so~$H^\bullet(G_S)\simeq \A(\Gamma)$.
    \end{rema}

\section{Numerical examples and computations}
\label{sec:Examples and Computations}
\subsection{Examples Theorem~\ref{first theorem}}
\label{ssec:Examples mild groups}

It is not hard to find numerical examples of~$2(1+d_K')$ additional tame primes, satisfying Theorem~\ref{first theorem}, in concrete situations. All the computations were made using the computer algebra system OSCAR~\cite{OSCAR}. The relevant source code can be found at \cite{Feuerpfeil2026Code}. Write $S_0=\{q_1,q_2,...,q_d\}$. Then we are looking for tame primes~$q_{a_1}, q_{a_2}$, $q_{1,1},\dots , q_{1,d_K'}$, $q_{2,1},\dots ,q_{2,d_K'}$ such that the the linking numbers satisfy the relations from the proof of Theorem~\ref{thm:Mildness for arbitrary fields}. In our search we were minimizing the size of $q_{a_1}$ and chose the other primes accordingly. We give examples in the rational and quadratic real cases.

\subsubsection{Rational case}
Take~$p=3$. For $S_0=\{7,13,19,37\}$ one finds that the set of tame primes $S_1\coloneq \{10639,826093\}$  completes $S_0$ to satisfy Corollary~\ref{coco Labute}. 
\subsubsection{Real quadratic case}
Fix~$p=3$. We give two examples:

(a) Consider~$K=\Q(\sqrt{3})$, we have~$d_K'=1$. We note that~$S'=\{(5)\}$ yields a Koch set for~$K$. Consider~$S_0\coloneq \{(7),(-\sqrt{3}+4)\}$. Following the proof of Theorem~\ref{thm:Mildness for arbitrary fields} we are able to construct 
\begin{align*}
    S_1\coloneq S'\amalg \{(1063),(593),(531\sqrt{3}+2224)\}
\end{align*}
such that~$G_{K,\swidetilde{S}}$ with~$\widetilde{S}=S_0\cup S_1$ is mild.

(b) For $K=\Q(\sqrt{79})$, we compute that $d'_K=2$ and set $T_K\coloneq \{(3,\sqrt{79}+1)\}$. We set $S_0=\{(7,\sqrt{79}+4),(13,\sqrt{79}+1)\}$. Then $S'=\{(7,\sqrt{79}+3),(11)\}$ is a Koch set disjoint from $S_0$ and we find that for 
\begin{align*}
    S_1\coloneq S'\amalg \big\{&(65\sqrt{79}+66),(17389,\sqrt{79}+4273),\\ &(-2919\sqrt{79}+26594),(86121199, \sqrt{79} + 5709782) \big \}
\end{align*}
and $\swidetilde{S}\coloneq S_0\cup S_1$ the group $G_{K,\widetilde{S}}^{T_K}$ is mild.

\subsection{Examples for $1$-RAAGs}
\label{ssec:Examples for 1 RAAGs}
We give two examples. One in the rational case, and the other in the real quadratic case. We take~$p=3$.

\subsubsection{Rational case}
Consider the case~$K=\Q$. Then~$d_K'=0$ and~$T_K=\emptyset$. Let us consider~$\Gamma$ the~$4$-cycle graph with vertices~$\bX\coloneq \{x_1,x_2,x_3,x_4\}$ and edges~$\bE\coloneq \{\{x_1,x_2\}, \{x_2,x_3\}, \{x_3,x_4\}, \{x_4,x_1\}\}$. Then for $S\coloneq \{7,13,181,5563\}$, the group~$G_S$ is a~$1$-RAAG with respect to~$\Gamma$ and thus its cohomology is~$\A(\Gamma)$.

\subsubsection{Imaginary quadratic case}
Let $K=\Q(\sqrt{-31})$, then~$d_K'=1$. We consider the graph~$\Gamma\coloneq (\bX, \bE)$ on six vertices and seven edges:
$$\bE\coloneq \{ \{x_1,x_2\},  \{x_2,x_3\}, \{x_3,x_4\}, \{x_4,x_5\}, \{x_5,x_6\}, \{x_6,x_1\}, \{x_1,x_4\}\}.$$
The set $T_K\coloneq \{(2,\tfrac{\sqrt{-31}+3}{2})\}$ generates ${\rm cl}(K)\cong \Z/3$ and $S'=\{(7,\sqrt{-31}+2)\}$ is a Koch set. Furthermore, if we set
\begin{align*}
    S\coloneq S'\amalg\{&(7, \sqrt{-31} + 5),
(13),
(4129),
(3613),\\&
(257443, \sqrt{-31} + 245436),
(1062643, \sqrt{-31} + 778479)\}
\end{align*}
then $G_{K,S}^{T_K}$ is a $1$-RAAG for $\Gamma$ and hence $H^\bullet(G_{K,S}^{T_K})\simeq \A(\Gamma)$.

\subsection{Examples which are not~$(\bX,e)$-RAAG}
\label{ssec:nonRAAG}

Fix three finite sets~$\bX=\{x_1,\dots,x_d\}$,~$\bX'\coloneq \{x_1',\dots, x'_{d'}\}$ and~$\bX_0\coloneq \{z_1,\dots z_m\}$. Let $\Ll\coloneq\Ll(\bX).$ We introduce a family~$\rho \coloneq \{\rho_{x}, \rho_{x'}, \rho_z|\quad x\in \bX, x'\in \bX', z\in \bX_0\}$ in~$\Ll_2$. We denote by~$\Ll(\rho)$ the quotient of~$\Ll$ by the Lie-ideal generated by~$\rho$ and~$\A(\rho)$ the quadratic dual of~$\E(\rho)$. Here $\E(\rho)$ 
is the quotient of~$\E\coloneq\E(\bX)$ by the two-sided ideal generated by~$\rho$. For instance, we refer to~\cite[Chapter 1, \S 2]{polishchuk2005quadratic} for definitions and references on quadratic duals. 

Since~$p$ is odd, then~$\Ll_2=\lbrack \Ll_1,\Ll_1\rbrack$. We assume that for every~$x_i \in \bX$, there are coefficients $a_{ij} \in \F_p$ such that~$\rho_{x_i} \coloneq \sum_{x_j\in \bX, x_j\neq x_i} a_{ij}\lbrack X_i,X_j\rbrack$.
For every~$x_k'\in \bX'$, we assume there exists a tuple~$(u_{k},v_{k})\in \Pi(\bX)$ such that~$\rho_{x_k'}\coloneq \lambda_{x_k'}\lbrack \tau_{u_k}, \tau_{v_k}\rbrack$ with~$\lambda_{x_k'}\in \F_p^\ast$. 
This allow us to define a function~$\varepsilon_\rho \colon \bX' \times \bX \to \{0,1\}$ by 
    \begin{align*}
        \varepsilon_\rho(x_k',x)=\begin{cases}
           1&\text{if }x=u_k,\\
            0&\text{otherwise.}
        \end{cases}
    \end{align*}
Thus, we have $\Supp_{\varepsilon_\rho}(x_k')=\{u_k\}$, so~$(x_k',v_k)$ is in~$D_{\varepsilon_\rho}$, and we have:
$$D_{\varepsilon_\rho}\coloneq \Pi(\bX)\cup \{(x_k', x_{k_i})|\quad x_k'\in \bX, x_{k_i}\neq u_k\}.$$
Thus, the family~$\rho$ induces a map~$\mu_\rho\colon D_{\varepsilon_\rho} \to \F_p$ defined by:
$$\mu_{\rho}(x_i,x_j)\coloneq a_{ij} \quad \text{and} \quad \mu_{\rho}(x_k',v_k)\coloneq \lambda_{x_k'}, \quad \text{else } \mu_\rho(x_k',x)\coloneq 0,$$
for $x_i\in \bX$, and~$x_k'\in \bX'$. In particular, we have~$\rho_{x_i} \coloneq \sum_{x_j\neq x_i} \mu_{\rho}(x_i,x_j)\lbrack X_i,X_j\rbrack$.

\begin{prop}\label{theo-arithgenus}
We assume that there exists an order~$\leq$ such that~$\widehat{\rho}\coloneq \{\widehat{\rho_u}\}_{u\in \bX\amalg \bX'\amalg \bX_0}$ is combinatorially free.

Then for any number field~$K$ satisfying~$d'=d_K'$, there exist a set $S$ of tame primes and a disjoint set~$T$ such that
$$\Ll(G^T_{K,S}) \simeq \Ll(\rho), \quad \text{and} \quad H^\bullet(G_{K,S}^T)\simeq \A(\rho).$$
\end{prop}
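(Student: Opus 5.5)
The plan mirrors the proof of Proposition~\ref{app-thm1}, with the explicit family $\rho$ in place of a graph. We realize the relations indexed by $\bX$ and $\bX'$ through linking data, and the relations indexed by $\bX_0$ through splitting conditions at an auxiliary set $T\supseteq T_K$. Mildness then comes from combinatorial freeness via Corollary~\ref{hilbseriescomb}, and the cohomology from Koszulity.

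\textbf{Step 1: realize the linking tuple.} We take the linking tuple $\lambda=(\bX',\bX,\varepsilon_\rho,\mu_\rho,\omega)$, with $\omega_z=p$ for all $z$, so that the power terms lie in $F_3$ since $p\geq 3$. By Theorem~\ref{thm:Realization of extended linking data} (Lemma~\ref{lem:Existence of Koch Tuples} together with Theorem~\ref{extension}), there is a tame set $S$ admitting a Koch set $S'$ with $|S'|=d'=d_K'$, identified with $\bX'$, such that $\varepsilon_S^{T_K}=\varepsilon_\rho$, $\mu_S^{T_K}=\mu_\rho$ and $v_p(\omega_S)\geq 1$.

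By Theorem~\ref{theo-Liu}, $G_{K,S}^{T_K}$ is then generated by $\tau_x$ for $x\in\bX$, with relations $l_x$ and $l_{x'}$. Expanding modulo $F_3$ gives
\[
l_{x_i}\equiv\prod_j[\tau_{x_i},\tau_{x_j}]^{a_{ij}}, \qquad l_{x'_k}\equiv[\tau_{x'_k},y_{x'_k}].
\]
Here $\tau_{x'_k}\equiv\tau_{u_k}^{\mu'(x'_k,u_k)}$ modulo $F_2$, because $\Supp(x'_k)=\{u_k\}$. The $u_k$-component of $y_{x'_k}$ is irrelevant, so $l_{x'_k}\equiv[\tau_{u_k},\tau_{v_k}]^{c\lambda_{x'_k}}$ with $c\neq 0$. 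Hence the initial forms are $\rho_{x_i}$ and a nonzero multiple of $\rho_{x'_k}$. Rescaling by a suitable power, as in the Remark after the definition of $(\bX,e)$-RAAGs, removes the constant.

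\textbf{Step 2: realize the relations $\rho_z$ by splitting.} For each $z\in\bX_0$, choose $\sigma_z\in F_2$ whose image in $F_2/F_3\simeq\Ll_2$ is $\rho_z$; this uses $\Ll_2=[\Ll_1,\Ll_1]$. By Chebotarev applied to the finite quotient $G_{K,S}^{T_K}/(G_{K,S}^{T_K})_3$, pick a prime $\mathfrak l_z\notin S\cup T_K$ whose Frobenius in that quotient is the image of $\sigma_z$. Set $T=T_K\cup\{\mathfrak l_z\}_{z\in\bX_0}$.

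Since $\mathfrak l_z$ is unramified, imposing total splitting kills its decomposition group, which is generated by the Frobenius. So $G_{K,S}^T$ is $G_{K,S}^{T_K}$ modulo the normal closure of elements $f_z\equiv\sigma_z$ modulo $F_3$. The main point to check is that these $|\bX_0|$ extra relations give a presentation of $G_{K,S}^T$ on the same generators, whose initial forms in $\E_2$ are exactly $\rho_z$.

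This is where I expect the main obstacle: the Frobenius is only defined up to conjugacy and only matters modulo $F_3$. One must argue that the resulting presentation $\PP$ has quadratic initial forms equal to $\rho$ in the $e=(1,\dots,1)$ filtration. The choice of lift does not affect the initial form, provided $\rho_z\neq 0$. If some $\rho_z$ is zero or linearly dependent on the others, it must be discarded, but combinatorial freeness of $\widehat\rho$ forbids this degeneracy.

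\textbf{Step 3: mildness and conclusion.} With $\PP$ having initial forms $\rho$ and $\widehat\rho$ combinatorially free, Corollary~\ref{hilbseriescomb} gives mildness for the Zassenhaus filtration. By the mildness theorem (Labute), $\varphi_\PP$ is an isomorphism, so $\Ll(G_{K,S}^T)\simeq\Ll(\rho)$, and $\psi_\PP$ gives $\E(G)\simeq\E(\rho)$. A quadratic algebra whose relations have combinatorially free leading monomials has a quadratic Gröbner basis, hence is Koszul. By \cite[Proposition~$1$]{hamza2023extensions}, as in Proposition~\ref{1-RAAG}, this yields $H^\bullet(G_{K,S}^T)\simeq\E(\rho)^{!}=\A(\rho)$.
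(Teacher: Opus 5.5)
Your proposal is correct and follows the paper's proof. Like the paper, you realize $\varepsilon_\rho$ and $\mu_\rho$ through Theorem~\ref{thm:Realization of extended linking data}, add Chebotarev primes $T_0$ to $T_K$ whose Frobenii have initial forms $\rho_z$ modulo $E_3(\bX)$, and conclude mildness from combinatorial freeness via Corollary~\ref{hilbseriescomb}, with the cohomology coming from Koszulity. The obstacle you flag in Step 2 disappears because the image of $F_2$ in $G/G_3$ is central: the Frobenius class there is a single element, so a lift of the Frobenius can be multiplied by an element of $R$ to become $\equiv\sigma_z \pmod{F_3}$.
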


\begin{proof}
This proof is very similar to the one given in Theorem~\ref{app-thm1}. We fix~$T_K$. By Theorem~\ref{thm:Realization of extended linking data} there exists a tame set~$S$ such that~$\varepsilon_S^{T_K}=\varepsilon_\rho$ and~$\mu_S^{T_K}=\mu_\rho$. 

By the Chebotarev Density Theorem, we can find a set of primes~$T_0\coloneq \{t_z, z\in \bX_0\}$, disjoint from~$T_K$, such that the associated Frobenii~$y_z$ satisfy~$\varPsi_{F(\bX)}(y_{z})-1\equiv \rho_z \pmod{E_3(\bX)}$ for~$z\in \bX_0$. We define~$T\coloneq T_K\cup T_0$. Since the family~$\rho$ is combinatorially free, we conclude that the presentation~$\PP_S^T$ is mild and so:
    $$\Ll(G_{K,S}^T)\simeq \Ll(\rho), \quad \text{and} \quad H^\bullet(G_{K,S}^T)\simeq \A(\rho).$$
\end{proof}

\begin{rema}
    The results on $(\bX,e)$-RAAGs do not require the full strength of Theorem~\ref{thm:Realization of extended linking data}. Only the vanishing of~$\mu_S^{T_K}(x_i,x_j)\neq 0$ is relevant. But Proposition~\ref{theo-arithgenus} heavily depends on the precise values of linking numbers.
\end{rema}


\begin{exem}\label{last exemple}
    Let~$p=3$. We consider various situations for~$K$ a number field not containing~$\zeta_p$, and give numerical examples for Proposition~\ref{theo-arithgenus}.
    
    (a) We fix~$d=4$ and~$d'=0$ so~$\bX\coloneq \{x_1,x_2, x_3,x_4\}$. We define a family~$\rho$ in~$\Ll_2$ by:
    \begin{align*}
        \rho_1\coloneq [X_1,X_4],\quad \rho_2\coloneq &[X_2,X_3+X_4],\quad \rho_3\coloneq[X_3,X_1+X_2+X_4],\\
        \text{and}&\quad \rho_4\coloneq[X_4,X_2+X_3].
    \end{align*}
    The order~$>$ is given by~$X_1> X_2> X_3> X_4$. Thus, $\mu(x_i,x_j)=1$ except for $(x_i,x_j)=(x_1,x_2),(x_1,x_3),(x_2,x_1),(x_4,x_1)$, where it takes the value $0$. Here, we note that~$\A(\rho)$ is presented by four generators~$X_1,X_2,X_3,X_4$ and twelve relations:
$$\{X_i^2, \quad X_uX_v+X_vX_u, \quad X_1X_2, \quad X_2X_3-X_2X_4+X_3X_4, \quad 1\leq i \leq 4, 1\leq u<v \leq 4\}.$$
    Assume that~$K\coloneq \Q$ and~$p=3$. One can explicitly choose $S=\{31,163,409,433\}$, and we have~$H^\bullet(G_S)\simeq \A(\rho)$ and~$\Ll(G_S)\simeq \Ll(\rho)$.

    (b) We set~$\bX\coloneq \{x_1,x_2, x_3,x_4,x_5\}$ and define a family~$\rho$ in~$\Ll_2$ by:
    \begin{align*}
        \rho_1\coloneq [X_1,X_4],\quad \rho_2\coloneq &[X_2,X_3-X_4],\quad \rho_3\coloneq[X_3,X_1+X_2+X_4],
        \\ \rho_4\coloneq[X_4,X_2-X_3], \quad & \rho_5\coloneq \lbrack X_5, X_1\rbrack, \quad \text{and} \quad \rho_{x'} \coloneq \lbrack X_2,X_5\rbrack .
    \end{align*}
The order~$>$ is given by~$X_1>X_2> X_3> X_4> X_5$. 

Here, we note that~$\A(\rho)$ is presented by five generators~$X_1,X_2,X_3,X_4, X_5$ and~$19$ relations:
\begin{multline*}
\{X_i^2, \quad X_uX_v+X_vX_u, \quad X_1X_2, \quad X_3X_5, \quad \\ X_4X_5, \quad X_2X_3+X_2X_4+X_3X_4, \quad 1\leq i \leq 5, 1\leq u<v \leq 5\}.
\end{multline*}
   Take~$p=3$ and $K=\Q(\sqrt{5})$ then $d'_K=1$ and $S'=\{(2)\}$ is a Koch set. If we consider
 \begin{align*}
        S\coloneq \{(2), (283), (4\sqrt{5}-1), (353),\big(\tfrac{-\sqrt{5}-189}{2}\big),  \big(\tfrac{-343\sqrt{5}-17}{2}\big)\},
    \end{align*}
    then $H^\bullet(G_{K,S})\cong \A(\rho)$ and~$\Ll(G_{K,S})\simeq \Ll(\rho)$.
\end{exem}
\bibliography{bibactbib3}
\bibliographystyle{plain}

\end{document}